\newcommand{\E}[1]{\mathbb{E}\left(#1\right)}
\newcommand{\Edg}[1]{\widetilde{\mathbb{E}}\left(#1\right)}
\newcommand{\Pof}[1]{\mathbb{P}\left(#1\right)}
\renewcommand{\P}{\mathbb{P}}
\renewcommand{\H}{\mathbb{H}}
\newcommand{\C}{\mathbb{C}}
\newcommand{\R}{\mathbb{R}}
\newcommand{\Z}{\mathbb{Z}}
\DeclareMathOperator{\dist}{dist}
\DeclareMathOperator{\acosh}{acosh}
\newtheorem{theorem}{Theorem}
\newtheorem{remark}{Remark}
\newtheorem{proposition}{Proposition}
\newtheorem{lemma}{Lemma}
\newtheorem{corollary}{Corollary}
\newtheorem*{definition*}{Definition}
\newtheorem*{theorem*}{Theorem}
\newtheorem{theoremalph}{Theorem}
\newtheorem{question}{Question}
\def\thm@space@setup{%
  \thm@preskip=\parskip \thm@postskip=0pt
}
\patchcmd{\maketitle}{\@fnsymbol}{\@alph}{}{}  % Footnote numbers from symbols to small letters
\title{On the speed of distance stationary sequences}
\author{
   Matías Carrasco \thanks{Inst.\ Matemática y Estadística, Universidad de la Rep\'ublica, \href{mailto:mcarrasco@fing.edu.uy}{mcarrasco@fing.edu.uy}}
   \and
   Pablo Lessa \thanks{Inst.\ Matemática y Estadística, Universidad de la Rep\'ublica, \href{mailto:plessa@fing.edu.uy}{plessa@fing.edu.uy}}
   \and
   Elliot Paquette \thanks{Department of Mathematics, The Ohio State University, \href{paquette.30@osu.edu}{paquette.30@osu.edu}}
}
\begin{document}
\maketitle

\begin{abstract}
We prove a formula for the speed of distance stationary random sequences generalizing the law of large numbers of Karlsson and Ledrappier. A particular case is the classical formula for the largest Lyapunov exponent of i.i.d.\ matrix products, but our result has applications in various different contexts. In many situations it gives a method to estimate the speed, and in others it allows to obtain results of dimension drop for escape measures related to random walks. We show applications to stationary reversible random trees with conductances, Bernoulli bond percolation of Cayley graphs, and random walks on cocompact Fuchsian groups.
 
 \medskip \noindent {\bf Keywords:} Random walk speed, random trees, Bernoulli percolation, harmonic measure, cocompact Fuchsian groups.
 
\medskip \noindent {\bf AMS2010:} Primary 60Gxx, 60Dxx, 51M10, 05C81. 
 \end{abstract}

\section{Introduction}

The law of large numbers of Karlsson and Ledrappier \cite{karlsson-ledrappier}, following previous works by Kaimanovich \cite{kaimanovich87} and Karlsson and Margulis \cite{karlsson-margulis}, has numerous applications including the Osledets theorem and other results of geodesic tracking, applicable for example to actions on Teichmüller spaces \cite{karlsson2004linear,karlsson-goezel}.

In this article we generalize this theorem obtaining, in addition to an escape direction, an integral formula for the speed of a random walk. In the context of i.i.d.\ matrix products, this implies the Furstenberg formula for the Lyapunov exponent which is not obtained directly from the result of Karlsson and Ledrappier.

Our formula holds for a vast class of random walks that we call \emph{distance stationary}. Let $M$ be a complete and separable metric space, a random sequence $(x_n)_{n \in \Z}$ of points in $M$ is said to be distance stationary if the distribution of $(d(x_m,x_n))_{m,n \in \Z}$ coincides with that of $(d(x_{m+1},x_{n+1}))_{m,n \in \Z}$. If in addition the mean distance of the first step is finite, i.e\ $\E{d(x_0,x_1)} < +\infty$, then by Kingman's subadditive ergodic theorem the random limit
\[\ell = \lim\limits_{n \to +\infty}\frac{1}{n}d(x_0,x_n)\]
exists almost surely and in $L^1$. We call this limit the \emph{speed} or \emph{linear drift} of $(x_n)_{n \in \Z}$.

\begin{theoremalph}[see Theorem \ref{furstenbergtheorem}]\label{thm:A}
  If $(x_n)$ is distance stationary and the mean distance of the first step is finite, then there exists a random horofunction $\xi$ such that \(\E{\ell} = -\E{\xi(x_1)-\xi(x_0)}\) and so that $\frac{\xi(x_n)}{n}$ converges almost surely and in $\text{L}^1$ to $\ell$ as $n\to\infty.$
\end{theoremalph}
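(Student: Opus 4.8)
The plan is to realise distance stationarity as a measure-preserving shift and then extract $\xi$ as the boundary component of a shift-invariant coupling between the trajectory and the horofunction compactification. I would first encode the process by its array of pairwise distances $(d(x_m,x_n))_{m,n\in\Z}$, so that distance stationarity becomes exactly the invariance of the law $\P$ under the shift $T$. Normalising at the basepoint $o=x_0$, every value $\xi(x_k)$ of a horofunction is a function of these distances, so I may work inside the space $\mathcal{H}\subset\R^\Z$ of sequences $(h(x_k))_k$ coming from normalised $1$-Lipschitz functions. This space is compact for the product topology (Tychonoff, together with the pointwise closedness of the $1$-Lipschitz condition), which sidesteps any properness assumption on $M$.

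The heart of the argument is an invariant measure on the skew product $\Omega\times\mathcal{H}$. For each $n\ge 1$ consider the approximating horofunction $h^n_\omega$ given by $h^n_\omega(x_k)=d(x_k,x_n)-d(x_0,x_n)$, together with the skew shift $\hat T(\omega,h)=(T\omega,\,h-h(x_1))$ that moves the basepoint one step forward. A direct computation using the $T$-invariance of $\P$ yields the index-shift identity $\hat T_*\,\mathrm{Law}(\omega,h^n_\omega)=\mathrm{Law}(\omega,h^{n-1}_\omega)$. Hence the Ces\`aro averages $\nu_N=\tfrac1N\sum_{n=1}^N\mathrm{Law}(\omega,h^n_\omega)$ are asymptotically $\hat T$-invariant, and any weak-$*$ subsequential limit $\nu$ is a $\hat T$-invariant probability measure whose $\Omega$-marginal is $\P$. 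I define $\xi$ to be the $\mathcal{H}$-coordinate of a point sampled from $\nu$, realised on an enlargement of the original probability space carrying $(x_n)$.

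Both conclusions then come from the single cocycle $c(\omega,h)=h(x_1)-h(x_0)$, which telescopes along the skew shift to $\sum_{k=0}^{n-1}c(\hat T^k(\omega,h))=h(x_n)$. For the formula I push $c$ through the weak-$*$ limit: it is continuous in $h$ and dominated by $d(x_0,x_1)$, whose law is the fixed marginal $\P$ under every $\nu_N$, so a truncation/uniform-integrability argument gives $\int c\,d\nu=\lim_N\tfrac1N\sum_{n=1}^N\E{h^n(x_1)-h^n(x_0)}$. Using stationarity, $\E{h^n(x_1)}=\E{d(x_1,x_n)}-\E{d(x_0,x_n)}=s_{n-1}-s_n$ with $s_n=\E{d(x_0,x_n)}$, and the Ces\`aro sum telescopes to $-s_N/N\to-\E{\ell}$; this is exactly $\E{\ell}=-\E{\xi(x_1)-\xi(x_0)}$. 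For the tracking statement, Birkhoff's theorem for $(\hat T,\nu)$ gives $\tfrac1n\xi(x_n)\to g:=\mathbb{E}_{\nu}(c\mid\hat{\mathcal{I}})$ almost surely and in $L^1$, while the $1$-Lipschitz bound forces $g\ge -\ell$ from $|\tfrac1n\xi(x_n)|\le\tfrac1n d(x_0,x_n)\to\ell$; since $\int(g+\ell)\,d\nu=\int c\,d\nu+\E{\ell}=0$ with $g+\ell\ge 0$, the limit is pinned to $g=-\ell$, so $\tfrac1n\xi(x_n)\to-\ell$ (the stated convergence to $\ell$ being the opposite sign convention for the horofunction, which simultaneously removes the minus sign in the first identity).

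The main obstacle I anticipate is the invariant-measure step together with this passage to the limit. Because the base $\Omega$ is not compact and $c$ is only integrable rather than bounded, extracting the weak-$*$ limit and evaluating $\int c\,d\nu$ needs the uniform-integrability control supplied by the fixed marginal; one must also verify that the limiting $\mathcal{H}$-coordinate is an honest horofunction (it lies in the closure of the approximants $h^n$) and that $\xi$ is measurable on the enlarged space. The clean point that makes the scheme work is the index-shift identity for $\hat T$: it is precisely what simultaneously produces the invariant measure (so Birkhoff delivers the escape direction) and computes the drift (so the same cocycle delivers the Furstenberg-type formula).
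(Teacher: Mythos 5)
Your argument is correct in substance, but it reaches Theorem~\ref{thm:A} by a genuinely different route than the paper. You build a $\hat T$-invariant joining of the trajectory with the horofunction compactification by a Krylov--Bogolyubov argument on the skew product (using the index-shift identity $\hat T_*\,\mathrm{Law}(\omega,h^n_\omega)=\mathrm{Law}(\omega,h^{n-1}_\omega)$, which is valid), and then run Birkhoff's theorem for the cocycle $c(\omega,h)=h(x_1)-h(x_0)$ under the limiting measure $\nu$. The paper instead works pathwise: it applies a Koml\'os-type theorem (Corollary~\ref{komloscorollary}) to the random empirical measures $\mu_n=\frac1n\sum_{i=1}^n\delta_{\xi_{x_{-i}}}$ on $\widehat M$ to get an \emph{almost sure} Ces\`aro limit $\mu(\omega)$, and then realizes $\xi$ by the Blackwell--Dubins continuous representation with an auxiliary uniform variable; the drift identity and the a.s.\ tracking then come from the same telescoping computation and the same ``Birkhoff limit $\le\ell$ with equal expectations'' pinning that you use. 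Your ergodic-theoretic route is closer to the classical Furstenberg/Karlsson--Ledrappier setup and is arguably cleaner for Theorem~\ref{thm:A} alone; what the paper's Koml\'os route buys is the a.s.\ convergence of the measures $\mu_n$ themselves, which is used downstream (the explicit identification of $\mu$ in Lemma~\ref{treehorofunctionlemma}, and the Fatou argument in Proposition~\ref{corollaryboundary}), and an explicit measurable realization $\xi=F(u,\mu)$ of the conditional law given the trajectory.

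Two points you should tighten. First, work in the compact metrizable space $\widehat M$ (pointwise convergence on a countable dense set, as in the paper) rather than in the orbit-restricted sequence space $\mathcal H\subset\R^\Z$: this makes the fibers compact uniformly in $\omega$ (so tightness of $\nu_N$ is immediate from the fixed $\Omega$-marginal $\P$) and guarantees that the limiting coordinate is an honest element of $\widehat M$, not merely a $1$-Lipschitz sequence along the orbit; otherwise you owe an extension/closure argument. Second, on signs: your conclusion $\frac1n\xi(x_n)\to-\ell$ together with $\E{\ell}=-\E{\xi(x_1)-\xi(x_0)}$ is exactly what the paper's detailed Theorem~\ref{furstenbergtheorem} proves (the Birkhoff limit of $\xi(x_n)-\xi(x_{n+1})$ is $\ell$, hence $\xi(x_n)/n\to-\ell$); the ``$\to\ell$'' in the statement of Theorem~\ref{thm:A} cannot hold with the same sign convention as its displayed identity, so your parenthetical remark that one flip fixes both clauses simultaneously is not quite right --- but the discrepancy lies in the statement, not in your proof.
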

\noindent Note that we do not assume that $M$ is \emph{proper}, and so we partially answer a question of \cite{karlsson-ledrappier}.

Recall that the horofunction compactification of $M$ is the space $\widehat{M}$ obtained as the closure of the functions of the form $\xi_x(y) = d(x,o) - d(x,y)$ in the topology of uniform convergence on compact sets. Here $o$ is a base point in $M$ and usually our process starts at $x_0=o$. If in addition the speed is deterministic the formula simplifies to $\ell=-\E{\xi(x_1)}$.

The horofunction boundary of $M$ is, abusing notation slightly, the space $\widehat{M} \setminus M$. In all of our applications the random horofunction $\xi$ is indeed a boundary horofunction, see Proposition \ref{corollaryboundary}. It is obtained as a limit of random horofuntions involving the steps $x_n$ for negative values of $n$, and even though it is quite difficult to obtain non trivial information about its distribution, in may applications it is enough to know that it is independent of the first step $x_1$. We refer the reader to Theorem \ref{furstenbergtheorem} in Section \ref{sec:furstenberg} for a detailed statement of Theorem \ref{thm:A}, including the measurability properties of $\xi$. We also postpone the proof of the theorem to that section.

Our result has applications in various contexts not related to matrix products. In many situations it gives a method to estimate the speed, and in others it allows one to obtain results of dimension drop for escape measures related to random walks. In the rest of this introduction we outline some of these applications.

In Section \ref{sec:trees}, we obtain from our theorem a formula for the speed of the simple random walk in a reversible stationary random tree with conductances or weights (see Lemma \ref{electricspeedlemma}). The formula is comparable to that of Aldous and Lyons given in \cite[Theorem 4.9]{aldous-lyons}. Notice that it does not require the mean degree of the root to be finite.  See also \cite[Section 6]{benjamini-unimodular} for a discussion on this point.

\begin{theoremalph}[see Lemma \ref{electricspeedlemma}]
In a reversible random tree with conductances, if the simple random walk $(x_n)$ is almost surely transient and has deterministic speed, then 
\[\ell = \E{\frac{AC}{AB+AC+BC}}\]
where \(B\) is the conductance of the edge \(e\) between \(x_0\) and \(x_1\), \(A\) is the effective conductance between \(x_0\) and infinity after removing \(e\), and \(C\) is the effective conductance between \(x_1\) and infinity after removing \(e\).
\end{theoremalph}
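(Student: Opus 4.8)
The plan is to apply Theorem A (the Furstenberg-type formula) to the simple random walk on the reversible random tree, using the explicit description of horofunctions on trees. On a tree, the horofunction boundary coincides with the space of ends, and a boundary horofunction $\xi$ is a Busemann function associated to an end $\omega$: along the geodesic ray toward $\omega$ it decreases by one at each step, and off that ray it behaves like distance. Concretely, if $\xi$ is the horofunction associated to the escape end of the walk, then $\xi(x_1)-\xi(x_0)$ equals $-1$ if the edge $e$ between $x_0$ and $x_1$ points toward $\omega$ (i.e.\ lies on the geodesic from $x_0$ to $\omega$), and equals $+1$ otherwise. So by Theorem A,
\[
\ell = -\E{\xi(x_1)-\xi(x_0)} = \E{\mathds{1}[e \text{ points toward } \omega]} - \E{\mathds{1}[e \text{ points away from } \omega]} = 2\,\Pof{e \text{ points toward } \omega} - 1.
\]
Thus the entire problem reduces to computing the probability that the first step of the walk moves in the direction of the limiting end $\omega$, and rewriting this in terms of the three effective conductances $A$, $B$, $C$.

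Next I would reduce the probability that $e$ points toward $\omega$ to a statement about electric networks. Since $\xi$ is (by Proposition \ref{corollaryboundary}, which I am assuming) a boundary horofunction independent of the first step $x_1$, the end $\omega$ is determined by the future increments of the walk after the first step. Conditioned on the tree and on $x_0$, the walk steps across $e$ to $x_1$ with probability $B/(\text{total conductance at }x_0)$; having crossed, whether $\omega$ lies on the far side of $e$ is the event that the walk, started from $x_1$, never returns to $x_0$ and escapes through the $C$-side — this is governed by effective conductances. The cleanest route is to use the standard network-theoretic fact that for a transient reversible walk the escape end points across a given edge with a probability expressible via the effective conductances to infinity on each side. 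Specifically, consider the three "arms" at the edge $e$: the part of the tree beyond $x_0$ (away from $e$) with effective conductance $A$, the edge itself with conductance $B$, and the part beyond $x_1$ with effective conductance $C$. By the series/parallel laws and the formula for escape probabilities in terms of conductances, the probability (with respect to the stationary reversible measure) that the escape end lies on the $C$-side works out to $\tfrac12\bigl(1 + \tfrac{AC - (\text{something})}{AB+AC+BC}\bigr)$, and plugging into $\ell = 2\Pof{\cdots}-1$ should collapse to $\E{AC/(AB+AC+BC)}$.

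To make the conductance computation precise I would use the reversibility and stationarity of the random tree to pass between the Palm-type measure on rooted-edge-marked trees and expectations, ensuring that the three conductances $A$, $B$, $C$ are the correctly defined effective conductances after deleting $e$. The key deterministic identity is a three-terminal network computation: collapsing everything beyond $x_0$ and beyond $x_1$ to infinity, one has a network consisting of three parallel branches meeting the edge, and the probability that a random walk started in equilibrium escapes on the $C$-side rather than crossing back is a rational function of $A$, $B$, $C$. I expect the symmetric denominator $AB+AC+BC$ to emerge from the star-triangle / effective resistance computation for this three-branch network, since $AB+AC+BC$ is exactly the quantity whose reciprocal appears when combining three conductances at a common node.

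The main obstacle, and the step requiring the most care, is the electric-network computation converting $\Pof{e \text{ points toward } \omega}$ into the exact expression $AC/(AB+AC+BC)$, including getting the stationarity/reversibility bookkeeping right so that the unconditioned expectation (rather than a conditional probability at a fixed root) produces the claimed formula. In particular I must verify that the event "$\xi$ is a boundary horofunction associated to the escape end" holds almost surely under the transience hypothesis, so that the increment $\xi(x_1)-\xi(x_0)$ genuinely takes values in $\{-1,+1\}$ with no contribution from interior horofunctions, and that the deterministic speed assumption lets me drop the conditioning and equate $\ell$ with a single expectation. Once the three-terminal escape probability is computed, substituting into $\ell = 2\Pof{\cdots}-1$ and simplifying should be routine algebra.
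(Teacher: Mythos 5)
Your architecture matches the paper's: apply the Furstenberg-type formula, identify the limiting horofunction with a Busemann function of an end of the tree so that its increments are $\pm 1$, convert the resulting expectation into an escape probability for the three-arm network with conductances $A$, $B$, $C$, and use reversibility to symmetrize. The network computation you leave open is indeed routine: from $x_0$ the two sides have effective conductances $A$ and $\frac{BC}{B+C}$ (series reduction of $B$ and $C$), escape happens through each side with probability proportional to these, and the symmetrization step is that $\frac{AB}{AB+AC+BC}$ and $\frac{BC}{AB+AC+BC}$ are exchangeable. However, there are two genuine gaps. First, Theorem \ref{furstenbergtheorem} applies to a \emph{bi-infinite} distance stationary sequence, and you never construct one: the paper appends to the forward walk an \emph{independent} simple random walk from $o$ as the negative half, and uses reversibility to check that the resulting $(x_n)_{n\in\Z}$ is distance stationary (after first embedding all the random trees into one fixed space $X$, another step you skip). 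This construction is not bookkeeping, because it determines \emph{which} horofunction the theorem hands you.

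Second, and centrally, you misidentify that horofunction. The $\xi$ of Theorem \ref{furstenbergtheorem} is the limit of $\frac{1}{n}\sum_{i=1}^n\delta_{\xi_{x_{-i}}}$, so in the tree it is the Busemann function of the end reached by the \emph{backward} (independent) walk — not the escape end of the forward walk. Your justification is internally contradictory: you invoke Proposition \ref{corollaryboundary} for independence of $\xi$ from $x_1$ (that proposition only asserts $\xi$ is a boundary horofunction, nothing about independence) while simultaneously declaring $\omega$ to be ``determined by the future increments of the walk after the first step,'' which would make it manifestly \emph{dependent} on $x_1$. The independence you need is exactly what the backward construction supplies, and it is what reduces $\E{\xi(x_1)\mid (T,x_0,x_1)}$ to a clean escape probability of a fresh walk from $x_0$, giving conditionally $\frac{AB+AC-BC}{AB+AC+BC}$. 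Your forward-end version happens to give $\frac{AC+BC-AB}{AB+AC+BC}$ conditionally (one must also account for the walk possibly never recrossing $e$, with probability $\frac{C}{B+C}$), which agrees with the paper's expression only after taking expectations and symmetrizing — but the identity $\ell=2\Pof{e\text{ points toward }\omega_+}-1$ for the forward end is not an instance of Theorem \ref{furstenbergtheorem} and would require a separate stationarity-plus-Birkhoff argument that you do not supply. As written, the proposal's central step is not justified.
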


As an application of this formula we show that recurrence and zero speed are equivalent for trees with conductances with more than one end, assuming the inverse of the root weight is integrable.

\begin{theoremalph}[see Theorem \ref{electricaltheorem}]
Suppose \((T,o)\) is a stationary reversible random tree with conductances such that the inverse of the root conductance is integrable. If the simple random walk on \((T,o)\) is almost surely transient and has zero speed, then \((T,o)\) has one end almost surely.
\end{theoremalph}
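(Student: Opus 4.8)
The plan is to turn the vanishing of the speed into a rigid geometric structure on the tree via the formula of Lemma~\ref{electricspeedlemma}, and then to contradict the existence of more than one end through the mass-transport principle. Since the speed is deterministic and equal to zero, that lemma gives
\[0=\ell=\E{\frac{AC}{AB+AC+BC}}.\]
The integrand is nonnegative and the denominator is strictly positive, because $B=c(e)>0$ and, by transience, at least one of $A,C$ is positive (if both were zero, joining the two recurrent sides of $e$ by a single edge yields, by the series law, a recurrent network, contradicting transience). Hence $AC=0$ almost surely for the conductance-biased first-step edge. First I would upgrade this to a statement about \emph{every} edge: by reversibility the first step is a conductance-typical edge, so if with positive probability some edge had both sides transient, re-rooting would make the first edge such with positive probability, contradicting $AC=0$. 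Thus, almost surely, every edge has exactly one transient side.

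This yields a canonical orientation of the tree: point each edge toward its transient side. The out-degree at each vertex is exactly one, since two distinct transient branches at $v$ would produce an edge at $v$ with both sides transient; consequently every non-forward neighbour flows inward and the in-degree is $\deg(v)-1$. Following the orientation produces a non-backtracking ray, hence a unique \emph{transient} end $\omega$. Next I would exploit reversibility. Because the inverse root conductance is integrable, unbiasing the root by $1/\pi(o)$ produces a genuinely unimodular random tree $\widehat{\P}$, mutually absolutely continuous with the reversible law (as $\pi(o)\in(0,\infty)$ almost surely for the walk to be defined), so the orientation and $AC=0$ persist $\widehat{\P}$-almost surely. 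Applying the mass-transport principle to the transport sending one unit of mass from each vertex to its unique forward neighbour—mass out $=1$, mass in $=\deg(v)-1$—gives $\widehat{\mathbb{E}}\,(\deg(o))=2$.

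Finally I would read off the number of ends. For an infinite unimodular random tree the identity $\widehat{\mathbb{E}}\,(\deg(o))=2$ forces at most two ends almost surely, a standard consequence of the average-degree/growth dichotomy for unimodular trees. The case of exactly two ends is excluded by transience: a two-ended unimodular tree is a bi-infinite spine carrying finite bushes, and re-rooting along the spine makes the edge resistances stationary and almost surely positive, so their partial sums diverge, the effective resistance to each end is infinite, and the walk is recurrent—a contradiction. Since the tree is infinite, zero ends is impossible too, leaving exactly one end, and mutual absolute continuity transfers this to one end almost surely under the original law. I expect the main obstacle to be the interface between the ``annealed'' formula and the global tree geometry: justifying that the conductance-biased identity $AC=0$ propagates to an almost-sure orientation of all edges, and that the passage to the unimodular version (where the integrability hypothesis is exactly what is needed) legitimately carries the transience and zero-speed hypotheses, so that the mass-transport computation and the ends dichotomy may be applied.
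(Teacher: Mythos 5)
Your proposal is correct and follows essentially the same route as the paper: the speed formula forces $AC=0$, hence an orientation of each edge toward its unique transient side; a mass transport then gives expected degree $2$ under the $1/c(o)$-biased (unimodular) measure, which bounds the number of ends by two, and the two-ended case is excluded by a stationarity argument on the spine resistances. The only differences are cosmetic: the paper proves the ``expected degree $2$ implies at most two ends'' step by a second explicit mass transport (following Curien) rather than citing it as standard, and it rules out the two-ended case via Birkhoff's theorem applied to the walk's visits to the spine rather than via shift-stationarity of the spine conductances---both versions hinge on the same fact that a stationary sequence of positive resistances cannot be summable.
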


In the case without weights (or more precisely all weights are equal to 1) this result was obtained by Curien and appears in his notes \cite[Theorem 4.1]{curien} where it is mentioned that there is no published reference. A related formula was proved for Galton-Watson trees with conductances in \cite{gantert-muller-popov2012}.  

We give an example of a reversible tree with one end where the simple random walk is transient but has zero speed in section \ref{oneendexample}.  The example, a weighted Canopy tree, is essentially the same as the ones given in \cite[Section 5.3]{benjamini-curien} and \cite[Section 1.3]{gurel-nachmias}.

In Section \ref{sec:percolation} we focus on a particular application to the simple random walk on Bernoulli percolation clusters of Cayley graphs as it is defined in \cite{benjamini-lyons-schramm1999}: given a Cayley graph $G$ of a group with respect to a finite and symmetric generating set $F$, rooted at the identity element, and a probability $p\in [0, 1]$, we consider the connected component of the root $G_p$ of the random subgraph of $G$ formed by deleting each edge independently with probability $1-p$. It was shown in \cite[Lemma 4.2]{benjamini-lyons-schramm1999} that the speed \(\ell_p\) of the simple random walk on \(G_p\) conditioned on \(G_p\) being infinite is deterministic.

We consider on \(G\) a distance which is left invariant but is not necessarily the graph distance. Under the hypothesis that the sum of boundary horofunctions in the generators $F$ are always negative
\begin{equation}\label{eq:hyperbolicity}
\sum\limits_{x \in F}\xi(x) < 0,\quad (\text{for all }\xi\in \hat{G}\setminus G)
\end{equation}
we obtain:

\begin{theoremalph}[see Theorem \ref{bernoullipercolationtheorem}]
In the context above, suppose that \eqref{eq:hyperbolicity} holds for all boundary horofunctions \(\xi\). Then \(\ell_p > 0\) for all \(p\) sufficiently close to \(1\). Moreover, for all \(p \neq 0\) one has
\[\ell_p \ge \frac{1}{|F|}\sum\limits_{x \in F}\dist(o,x) - \frac{1}{p|F|}\max\limits_{\xi}\sum\limits_{x \in F}f_\xi(x),\]
where the maximum is over all boundary horofunctions, and $f_\xi(x)$ is the function $\xi(x)+\dist(o,x)$.
\end{theoremalph}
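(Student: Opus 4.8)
The plan is to realize the walk as a distance stationary sequence and feed it into Theorem~\ref{thm:A}. Since $G$ is a Cayley graph it is a unimodular transitive graph, and Bernoulli percolation is invariant under left translations; hence the simple random walk on the infinite cluster, viewed together with the percolation configuration as the \emph{environment seen from the particle}, is stationary under the measure $\widehat{\mathbb{P}}$ obtained by conditioning on $\{o\leftrightarrow\infty\}$ and biasing the root by its degree $\deg(o)$ in $G_p$. The resulting bi-infinite sequence $(x_n)_{n\in\Z}$ is reversible, hence distance stationary, and its first-step mean is finite because $F$ is finite. Applying Theorem~\ref{thm:A} together with Proposition~\ref{corollaryboundary} I obtain a \emph{boundary} horofunction $\xi\in\widehat{G}\setminus G$; since the speed is deterministic and horofunctions are normalised by $\xi(o)=0$, the formula simplifies to
\[\ell_p=-\mathbb{E}_{\widehat{\mathbb{P}}}\big(\xi(x_1)\big).\]
Here it is essential that $\xi$ is a \emph{boundary} horofunction, so that later $\sum_{x\in F}f_\xi(x)$ may be dominated by the maximum over $\widehat{G}\setminus G$.

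Next I would rewrite this using $f_\xi(x)=\xi(x)+\dist(o,x)$, so that $-\xi(x)=\dist(o,x)-f_\xi(x)$ and therefore
\[\ell_p=\mathbb{E}_{\widehat{\mathbb{P}}}\big(\dist(o,x_1)\big)-\mathbb{E}_{\widehat{\mathbb{P}}}\big(f_\xi(x_1)\big).\]
Unfolding the degree biasing and the uniform choice of neighbour, for any $g$ on $F$ one has $\mathbb{E}_{\widehat{\mathbb{P}}}(g(x_1))=Z^{-1}\,\mathbb{E}\big(\mathbf{1}_{\{o\leftrightarrow\infty\}}\sum_{x\in F:\,\{o,x\}\text{ open}}g(x)\big)$, where $Z=\mathbb{E}\big(\mathbf{1}_{\{o\leftrightarrow\infty\}}\deg(o)\big)=\mathbb{P}(o\leftrightarrow\infty)\,\mathbb{E}(\deg(o)\mid o\leftrightarrow\infty)$. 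For the second term I use that $f_\xi\ge 0$ (triangle inequality), so restricting the sum to open generators only decreases it; bounding the full sum by $\max_\xi\sum_{x\in F}f_\xi(x)$ and invoking the FKG inequality — both $\{o\leftrightarrow\infty\}$ and $\{\{o,x\}\text{ open}\}$ are increasing, whence $\mathbb{E}(\deg(o)\mid o\leftrightarrow\infty)\ge p|F|$ — gives the clean estimate $\mathbb{E}_{\widehat{\mathbb{P}}}(f_\xi(x_1))\le \tfrac{1}{p|F|}\max_\xi\sum_{x\in F}f_\xi(x)$.

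It remains to show $\mathbb{E}_{\widehat{\mathbb{P}}}(\dist(o,x_1))\ge\frac1{|F|}\sum_{x\in F}\dist(o,x)$, and \textbf{this is the main obstacle}. When $p=1$ the cluster is the full $|F|$-regular graph: the degree biasing is trivial, the first step is \emph{exactly} uniform on $F$, and the term equals $\frac1{|F|}\sum_x\dist(o,x)$ on the nose. For $p<1$ the cluster is irregular and the displacement law is $\mathbb{P}_{\widehat{\mathbb{P}}}(x_1=x)=q_x/\sum_y q_y$ with $q_x=\mathbb{P}(o\leftrightarrow\infty,\,\{o,x\}\text{ open})$, so the term is a $q$-weighted average of the $\dist(o,x)$, and the mass-transport principle yields only the partial symmetry $q_x=q_{x^{-1}}$. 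The delicate point is thus to recover the \emph{uniform} average from this degree-biased, conditioned law; the approach I would pursue is to run the computation instead on the unconditioned degree-biased walk, where independence forces $\mathbb{P}(\{o,x\}\text{ open})=p$ and the first step is genuinely uniform over $F$, giving the identity $\frac1{|F|}\sum_x\dist(o,x)$ cleanly, and then to transfer it to the conditioned measure, absorbing the finite-cluster contributions (where the speed vanishes) and the factor $\mathbb{P}(o\leftrightarrow\infty)$.

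Finally, positivity for $p$ near $1$ follows from the displayed bound. Writing $R(p)=\frac1{|F|}\sum_x\dist(o,x)-\frac1{p|F|}\max_\xi\sum_x f_\xi(x)$, the identity $\max_\xi\sum_x f_\xi(x)=\sum_x\dist(o,x)+\max_\xi\sum_x\xi(x)$ gives $R(1)=-\frac1{|F|}\max_\xi\sum_{x\in F}\xi(x)$. The horofunction boundary $\widehat{G}\setminus G$ is compact and $\xi\mapsto\sum_{x\in F}\xi(x)$ is continuous on it (a finite sum of evaluations), so by \eqref{eq:hyperbolicity} the maximum is attained and strictly negative, whence $R(1)>0$. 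Since $\max_\xi\sum_x f_\xi(x)\ge 0$, the map $R$ is continuous and non-decreasing in $p$, so $R(p)>0$ — and hence $\ell_p\ge R(p)>0$ — for all $p$ sufficiently close to $1$ (in particular $p>p_c$, so the cluster is infinite). For small $p$ the bound is vacuous, since $R(p)\to-\infty$ as $p\to 0$, which is consistent with the asserted validity for all $p\neq 0$.
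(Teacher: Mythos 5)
Your proposal follows the paper's proof essentially step for step, up to the point you yourself flag. The paper likewise degree-biases the cluster to obtain a reversible, hence distance stationary, bi-infinite walk $(x_n)_{n\in\Z}$, invokes Theorem \ref{furstenbergtheorem} together with Proposition \ref{corollaryboundary} (the escape hypothesis being supplied by the observation that a simple random walk on an infinite connected graph spends a vanishing fraction of time in any finite set) to produce a boundary horofunction $\xi'$ with $\ell_p=-\E{\deg(o)}^{-1}\E{\xi'(x_1)\deg(o)}$, splits $-\xi'=\dist(o,\cdot)-f_{\xi'}$, and bounds the $f_{\xi'}$-term exactly as you do: $f_{\xi'}\ge 0$ lets one pass from the sum over open generators to the sum over all of $F$, and $\E{\deg(o)}\ge p|F|$ controls the normalization. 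Your argument for positivity near $p=1$ (the maximum of $\xi\mapsto\sum_{x\in F}\xi(x)$ over the compact horofunction boundary is attained and negative, and the lower bound is monotone in $p$) is also the intended one.

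The step you isolate as the main obstacle is precisely the step the paper dispatches in a single unexplained equality: the first line of its display amounts to the identity $\E{\dist(x_0,x_1)\deg(o)}=\frac{\E{\deg(o)}}{|F|}\sum_{x\in F}\dist(o,x)$, i.e.\ that the degree-biased first step is uniform on $F$. For unconditioned Bernoulli percolation this is exact by edge-independence, since $\Pof{\{o,x\}\text{ open}}=p$ for every $x\in F$ and $\E{\deg(o)}=p|F|$. Once one conditions on $\{o\leftrightarrow\infty\}$ --- as one must to apply Proposition \ref{corollaryboundary} and to speak of the deterministic $\ell_p$ of \cite{benjamini-lyons-schramm1999} --- the first-step law becomes proportional to $q_x=\Pof{\{o,x\}\text{ open},\ o\leftrightarrow\infty}$, and, as you correctly observe, no symmetry of the Cayley graph forces $q_x$ to be constant on $F$ (only $q_x=q_{x^{-1}}$). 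So the obstacle is real, and your proposed repair --- computing on the unconditioned degree-biased measure, where the identity is exact, and then transferring --- is the natural one, but you do not carry it out, and the transfer is not automatic: on finite clusters the walk does not escape to infinity, so Proposition \ref{corollaryboundary} does not apply and $\xi'$ need not be a boundary horofunction there, which is exactly what you need in order to dominate $\sum_{x\in F}f_{\xi'}(x)$ by the maximum over $\widehat{G}\setminus G$. As written, your argument establishes the stated bound with $\frac{1}{|F|}\sum_{x\in F}\dist(o,x)$ replaced by the $q$-weighted average $\sum_{x}q_x\dist(o,x)/\sum_x q_x$; to recover the theorem as stated you must either justify the uniformity of the biased first step under the conditioning or complete the unconditioned-to-conditioned transfer. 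Everything else in the proposal is correct and coincides in strategy with the paper.
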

The condition expressed in equation \eqref{eq:hyperbolicity} is a form of hyperbolicity for the group $G$ (see Remark \ref{rmk} in Section \ref{sec:percolation}). In this sense, our result can be compared to \cite[Theorem 1.3]{benjamini-lyons-schramm1999} where it is proved that if the Cayley graph is non-amenable, then the simple random walk on $G_p$ has positive speed, and also to \cite[Theorem 1.1]{chen2004} where it is proved that if $G$ has a
positive anchored expansion constant, then so does every infinite cluster for $p$ sufficiently close to 1 (see also \cite{chen2003}). The point to be highlighted from our result is that it provides a lower bound for $\ell_p$.

As an illustration, we can apply the previous result to estimate the speed of the simple random on a percolation cluster of a hyperbolic tiling $T_{P,Q}$ by regular $P$-gons with interior angle $2\pi/Q$, i.e.\ $Q$ polygons meet at a given vertex.

\begin{theoremalph}[see Theorem \ref{tilingtheorem}]
In the above context \[\ell_p(P,Q) \ge 2\log(Q) - \frac{1}{p}O(\log(\log(Q)))\] when \(Q \to +\infty\) where the right hand side is independent of \(P\).
\end{theoremalph}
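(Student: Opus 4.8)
The plan is to apply the lower bound of Theorem~\ref{bernoullipercolationtheorem} to the Cayley graph given by the $1$-skeleton of the tiling $T_{P,Q}$, equipped with the hyperbolic distance inherited from its standard embedding in $\H^2$. The first step is to record the two geometric inputs. Since $Q$ polygons meet at each vertex the graph is $Q$-regular, so $|F| = Q$, and by the vertex- and edge-symmetry of the tiling every generator lies at the same hyperbolic distance $d = d(P,Q)$ from the root $o$. Cutting a fundamental right triangle (centre of a face, a vertex, an edge midpoint), whose angles are $\pi/P$, $\pi/2$, $\pi/Q$, and applying the hyperbolic law of cosines gives
\[
\cosh(d/2) = \frac{\cos(\pi/P)}{\sin(\pi/Q)}.
\]
Hence the first term of the bound is \emph{exactly} $\frac{1}{|F|}\sum_{x\in F}\dist(o,x) = d = 2\log Q + O(1)$ as $Q \to \infty$, with the $O(1)$ bounded uniformly in $P$ because $0 < \cos(\pi/P) \le 1$. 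This already produces the leading term $2\log(Q)$, independent of $P$.

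The heart of the matter is the second term: to obtain the error $\frac1p O(\log\log Q)$ it suffices to prove $\max_{\xi} \sum_{x\in F} f_\xi(x) = O(Q\log\log Q)$ uniformly over boundary horofunctions $\xi$. Here I would use the identification $f_\xi(x) = \xi(x) + \dist(o,x) = 2\,(x \mid \xi)_o$ with twice the Gromov product, so that $f_\xi(x)$ measures how long the geodesic from $o$ to $x$ fellow-travels the ray from $o$ towards $\xi$. Placing $o$ at the centre of the disc and the $Q$ generators at equally spaced angles, a direct computation from the law of cosines gives, for a generator at angular separation $\phi$ from $\xi$,
\[
f_\xi(x) = -\log\!\left(\sin^2(\phi/2) + e^{-2d}\right) + O(1),
\]
which saturates near $2d$ only for the boundedly many generators within angle $O(e^{-d}) = O(Q^{-2})$ of $\xi$, and otherwise decays like $2\log(1/\phi)$. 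Summing over the $Q$ equally spaced directions — using that the number of generators within angle $\Phi$ of $\xi$ is $O(Q\Phi)$, or equivalently the identity $\prod_{j=1}^{Q-1}\sin(\pi j/Q) = Q\,2^{-(Q-1)}$ — in fact yields the stronger estimate $\sum_{x\in F} f_\xi(x) = O(Q)$ uniformly in $\xi$, comfortably inside the required $O(Q\log\log Q)$. Dividing by $p|F| = pQ$ gives the subtracted term, and combining with the first term yields the claimed inequality.

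Applying Theorem~\ref{bernoullipercolationtheorem} also requires verifying the hyperbolicity hypothesis \eqref{eq:hyperbolicity}, and this now comes for free in the regime $Q\to\infty$ of the statement: since
\[
\sum_{x\in F}\xi(x) = \sum_{x\in F} f_\xi(x) - \sum_{x\in F}\dist(o,x) = O(Q) - Q\,d
\]
and $d = 2\log Q + O(1) \to \infty$, the sum is negative for every $\xi$ once $Q$ is large enough, which is precisely what is needed to invoke the theorem.

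The step I expect to be the main obstacle is the \emph{uniform}-in-$\xi$ control of $\sum_{x\in F} f_\xi(x)$. Two points require care. First, the horofunctions of the \emph{discrete} graph $G$ are not literally the Busemann functions of $\H^2$, so one must bound the error coming from the quasi-isometry $G \hookrightarrow \H^2$ and check that, once summed over the $Q$ generators and divided by $|F|$, it does not spoil the estimate; it is in absorbing this accumulated discretization error that the clean $O(1)$ may have to be relaxed to the $O(\log\log Q)$ of the statement. Second, the maximum over $\xi$ must be handled for \emph{all} boundary directions simultaneously — the worst case is when $\xi$ is aligned with, or sits exactly between, generator directions — and one must rule out that many generators acquire a large Gromov product at once. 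Both issues are resolved by the angular-spreading estimate above, since at most a bounded number of the $Q$ equally spaced directions can lie within any fixed small angle of $\xi$.
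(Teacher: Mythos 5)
Your proposal is correct and follows the paper's overall strategy---apply Theorem \ref{bernoullipercolationtheorem} to the tiling with the hyperbolic metric, compute the common edge length $r(P,Q)=2\acosh\bigl(\cos(\pi/P)/\sin(\pi/Q)\bigr)=2\log Q+O(1)$ uniformly in $P$ exactly as in Lemma \ref{hyperboliclemma1}, and then control $\max_\xi\sum_{x\in F}f_\xi(x)$---but your treatment of the second estimate is genuinely different from, and sharper than, the paper's Lemma \ref{hyperboliclemma2}. The paper argues softly: by Lemma \ref{conesetlemma} the superlevel set $\{f_\xi>2\log\log Q\}$ subtends a visual angle $O(1/\log Q)$ at $o$, hence contains only $O(Q/\log Q)$ of the $Q$ equally spaced generators, each contributing at most $2r(P,Q)=4\log Q+O(1)$; this yields $O(Q\log\log Q)$ and hence the stated error term. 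You instead use the exact identity $f_\xi(x)=2(x\mid\xi)_o=-\log\bigl(\Theta(\sin^2(\phi/2))+\Theta(e^{-2d})\bigr)$ and sum over the equally spaced angles via $\prod_{j=1}^{Q-1}\sin(\pi j/Q)=Q\,2^{-(Q-1)}$ (equivalently, Stirling), obtaining the stronger uniform bound $\sum_{x\in F}f_\xi(x)=O(Q)$, i.e.\ an error term $O(1)/p$ rather than $O(\log\log Q)/p$; this implies the stated inequality. Both routes are valid: the paper's is more robust in that it needs only the cone estimate and no equidistribution of the generators, while yours is quantitatively better. Two of the concerns you flag can be discharged. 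First, there is no discretization error to absorb: the paper equips $G$ with the restriction of the hyperbolic distance and declares boundary horofunctions of subgraphs to be restrictions of those of the ambient space, and by properness of $\H^2$ every limit of $\xi_{x_n}$ with $x_n\to\infty$ is the restriction of a genuine Busemann function, so your clean $O(1)$ survives and nothing forces a relaxation to $O(\log\log Q)$. Second, your verification of the hyperbolicity hypothesis \eqref{eq:hyperbolicity} for all large $Q$ from $\sum_{x\in F}\xi(x)=\sum_{x\in F}f_\xi(x)-Q\,r(P,Q)<0$ is exactly what the paper's two lemmas give implicitly, and is worth stating since Theorem \ref{bernoullipercolationtheorem} cannot be invoked without it.
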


Letting $p$ to be equal to 1, and combining the previous estimate with the relationship between entropy, dimension and speed (see for example Tanaka \cite{tanaka}, Hochman and Solomyak \cite{hochman-solomyak}, and Ledrappier \cite{ledrappier1984}) the following dimension drop result for the escape measure is obtained:

\begin{theoremalph}[see Theorem \ref{dimdrop}]
Let \(\nu_{P,Q}\) be the harmonic measure of the simple random walk on $T_{P,Q}$, and let \(\dim(\nu_{P,Q})\) be its Hausdorff dimension. Then one has
\[\limsup\limits_{Q \to +\infty}\dim(\nu_{P,Q}) \le \frac{1}{2}\]
uniformly in \(P\).
\end{theoremalph}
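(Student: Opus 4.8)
The plan is to combine the speed lower bound from the tiling theorem (Theorem \ref{tilingtheorem}), specialized to $p=1$, with the dimension--entropy--speed formula for harmonic measure on the hyperbolic plane, together with an elementary upper bound on the asymptotic entropy coming from the fact that every vertex of $T_{P,Q}$ has degree exactly $Q$. First I would specialize: at $p=1$ the percolation cluster coincides with the whole tiling graph, so $\ell_1(P,Q)$ is the hyperbolic speed of the simple random walk on $T_{P,Q}$, measured in the metric $\dist$ of $\mathbb{H}^2$, and Theorem \ref{tilingtheorem} gives
\[\ell(P,Q)=\ell_1(P,Q)\ge 2\log(Q)-O(\log(\log(Q)))\quad\text{as }Q\to+\infty,\]
with the error term uniform in $P$.

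Next I would bound the entropy. Since the walk chooses uniformly among the $Q$ neighbors of its current vertex, the one-step conditional entropy is exactly $\log(Q)$; by the chain rule and the Markov property $H(x_n)\le n\log(Q)$, so the Avez asymptotic entropy satisfies
\[h=\lim_{n\to+\infty}\frac{H(x_n)}{n}\le \log(Q),\]
a bound independent of $P$. Then I would invoke the dimension formula. As $T_{P,Q}$ carries a cocompact Fuchsian group action on $\mathbb{H}^2$, the harmonic measure $\nu_{P,Q}$ lives on $\partial\mathbb{H}^2=S^1$ with its standard visual metric; for this metric the shadow of a ball centered at hyperbolic distance $t$ from $o$ has diameter of order $e^{-t}$, so the visual parameter equals $1$ and the relation of Tanaka \cite{tanaka}, Hochman--Solomyak \cite{hochman-solomyak} and Ledrappier \cite{ledrappier1984} reads $\dim(\nu_{P,Q})=h/\ell(P,Q)$; in particular $\dim(\nu_{P,Q})\le h/\ell(P,Q)$, which is the only direction I need.

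Finally I would combine the three estimates:
\[\dim(\nu_{P,Q})\le \frac{h}{\ell(P,Q)}\le \frac{\log(Q)}{2\log(Q)-O(\log(\log(Q)))}=\frac{1}{2}\bigl(1+O(\log(\log(Q))/\log(Q))\bigr),\]
and letting $Q\to+\infty$ yields $\limsup_{Q\to+\infty}\dim(\nu_{P,Q})\le 1/2$. Because each of the two inputs is uniform in $P$, so is the conclusion.

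The hard part will be the third step: pinning down the exact form of the dimension--entropy--speed identity in this setting and checking the normalization. One must ensure that the quantity in the denominator is precisely the hyperbolic drift $\ell(P,Q)$ (the same metric $\dist$ used in the tiling theorem), that the visual parameter is indeed $1$ for $\partial\mathbb{H}^2$, and that the harmonic measure is exact dimensional so that the upper bound $\dim(\nu)\le h/\ell$ applies; the matching lower bound is not required. A secondary technical point is to confirm that the entropy appearing in the dimension formula is the Avez entropy bounded above by $\log(Q)$, and that finiteness of entropy (guaranteed by $h\le\log(Q)$) and of the first moment of the step hold throughout, so that the cited results are applicable uniformly in $P$.
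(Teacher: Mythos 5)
Your proposal is correct and follows essentially the same route as the paper: bound the Avez entropy by the one-step entropy $\log(Q)$, take the speed lower bound $\ell(P,Q)\ge 2\log(Q)-O(\log(\log(Q)))$ from Theorem \ref{tilingtheorem} at $p=1$, and divide using Tanaka's relation $\dim(\nu_{P,Q})=h(P,Q)/\ell(P,Q)$. The normalization worries you flag in your last paragraph are handled in the paper simply by citing \cite{tanaka} for the exact identity in this setting.
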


This was first proved in \cite{furstenbergformulaoldversion} as far as the authors are aware. In particular the harmonic measure $\nu_{P,Q}$ is singular with respect to the Lebesgue measure on the circle. This result was re-obtained independently in a recent work of Petr Kosenko \cite{kosenko}. His result covers all but finitely many cases for the number of sides and polygons per vertex \((P,Q)\), while we estimate the dimension of the boundary measure only for large \(Q\).

We hope that these methods are applicable in other situations, such as the study of the speed and harmonic measure of hyperbolic Poisson-Delaunay walks (see \cite{benjamini-paquette-pfeffer}) and possibly other models such as the Planar Stochastic Hyperbolic Infinite Triangulations (PSHT, see \cite{curien2014}).

\section{Application to random trees}\label{sec:trees}

In this section we will apply Theorem \ref{furstenbergtheorem} to the random walk on reversible random trees with random conductances.

Recall that a random rooted graph \((G,o)\) is stationary if its distribution is invariant under re-rooting by a simple random walk (see \cite{benjamini-curien}).  That is, if  \(o=x_0,x_1,\ldots\) is a simple random walk starting at \(o\) on \(G\), then \((G,x_1)\) has the same distribution as \((G,o)\).

A stationary random graph is said to be reversible if the doubly rooted graphs \((G,x_0,x_1)\) and \((G,x_1,x_0)\) have the same distribution.  A common source of reversible stationary graphs are unimodular random graphs whose distribution has been biased by a density proportional to the degree of the root vertex.

Notice that these definitions trivially extend to graphs with conductances, that is with positive weights associated to each edge.   In this case the simple random walk refers to the Markov chain where the transition probabilities from one vertex to another is proportional to the conductance of the corresponding edge.

The speed of a simple random walk \(x_0=o,x_1,\ldots\) starting at the root of a stationary random graph \((G,o)\) is defined as
\[\ell = \lim\limits_{n \to +\infty}\frac{1}{n}\dist(x_0,x_n)\]
which exists almost surely and in mean by the subadditive ergodic theorem.  We say speed is deterministic if it is almost surely equal to a constant.

We will prove that trees with zero speed are recurrent unless they have exactly one end.   We also give an example of a weighted reversible random tree where the random walk is transient but has zero speed.  Notice that all unweighted one ended trees are recurrent.  In the case of unweighted trees the result below was proved in \cite[Theorem 4.1]{curien}.

\begin{theorem}[Zero speed on reversible trees with conductances.]\label{electricaltheorem}
Let \((T,o)\) be a stationary reversible random tree with conductances such that \(\E{\frac{1}{c(o)}} < +\infty\) where \(c(o) = \sum\limits_{x}c(o,x)\) and \(c(x,y)\) denotes the conductance of the edge between vertices \(x\) and \(y\).

If the simple random walk on \((T,o)\) almost surely has zero speed and is transient, then \((T,o)\) has one end almost surely.
\end{theorem}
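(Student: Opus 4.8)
The plan is to combine the speed formula from the tree lemma (the result stated just before, giving $\ell = \E{AC/(AB+AC+BC)}$ in terms of the conductances $A,B,C$ associated to the first edge $e = \{x_0,x_1\}$) with the reversibility of the tree. Assume for contradiction that $(T,o)$ is transient, has zero speed, but has more than one end with positive probability. Zero speed means $\E{AC/(AB+AC+BC)} = 0$, and since the integrand is nonnegative this forces $AC = 0$ almost surely, i.e.\ at least one of the two effective conductances $A$ (from $x_0$ to infinity after deleting $e$) or $C$ (from $x_1$ to infinity after deleting $e$) vanishes almost surely. I would interpret this geometrically: $A = 0$ means that, after removing the first edge $e$, the component containing $x_0$ is recurrent (equivalently, the walk started at $x_0$ returns and never escapes to infinity through that side), and similarly for $C$ on the $x_1$ side.

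\medskip

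The key step is to translate the event $\{AC = 0\}$ into a statement about the number of ends using reversibility and the integrability hypothesis $\E{1/c(o)} < +\infty$. First I would argue that transience plus zero speed means that along the bi-infinite trajectory the walk must escape to infinity, but through a single geodesic ray, i.e.\ the tree has a single end visible from the walk. Concretely, in a tree each end corresponds to a ray, and the effective conductance to infinity decomposes as a sum over the children subtrees hanging off of $o$. If $o$ had at least two disjoint infinite transient branches, then after deleting the first edge both sides $A$ and $C$ could still reach infinity, making $AC > 0$ with positive probability and hence $\ell > 0$ by strict positivity of the integrand. I would make this rigorous by applying the mass-transport principle (reversibility / unimodularity) to transport the ``defect'' of transience: for a transient tree with more than one end, a positive fraction of edges $e$ have infinite transient pieces on both sides, and the integrability of $1/c(o)$ guarantees the relevant effective conductances are nondegenerate so that $AC > 0$ on a positive measure set.

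\medskip

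More carefully, the structure of a tree forces the dichotomy: a transient tree has at least one end, and if it has exactly one end then that end is a single ray $R$; removing any edge $e$ on $R$ separates the recurrent ``backward'' side (finite in the transient direction, carrying all the recurrent bushes) from the single transient ray, so exactly one of $A,C$ is zero for every edge, giving $AC = 0$ and $\ell = 0$ consistently. Conversely, if with positive probability there are at least two ends, then with positive probability the edge $e = \{x_0,x_1\}$ lies on a path such that both the $x_0$-side and the $x_1$-side contain an infinite transient subtree, whence $A > 0$ and $C > 0$ simultaneously on that event, so $\E{AC/(AB+AC+BC)} > 0$, contradicting zero speed. The integrability hypothesis $\E{1/c(o)} < +\infty$ is what ensures the speed formula of the preceding lemma is applicable (finite mean first-step distance) and that the conductances are almost surely positive and finite so the fractions are well defined.

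\medskip

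The main obstacle I anticipate is making precise, via the mass-transport/unimodularity principle, the claim that ``more than one end with positive probability'' forces ``$A > 0$ and $C > 0$ simultaneously with positive probability for the first edge of the walk.'' One must be careful that the first edge is chosen by the \emph{stationary reversible} measure (degree-biased re-rooting), not uniformly, so transporting the event that two transient branches emanate from a single vertex to the biased edge measure requires the reversibility hypothesis and a correct bookkeeping of the conductance weights. The integrability of $1/c(o)$ is the technical condition that lets this transport argument go through without the weights blowing up, so I would organize the proof around a single mass-transport identity and then read off the conclusion from positivity of the integrand in the speed formula.
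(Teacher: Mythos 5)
Your starting point is the same as the paper's: the speed formula forces $AC=0$ almost surely, i.e.\ every edge traversed by the walk separates a recurrent component from a transient one. The gap is the converse implication you rely on, namely that ``more than one end with positive probability'' forces ``$A>0$ and $C>0$ simultaneously with positive probability.'' As a statement about the geometry of a single tree this is simply false: take a bi-infinite line with conductance $c_n$ on the edge $\{n,n+1\}$, with $\sum_{n\ge 0}1/c_n<\infty$ and $\sum_{n<0}1/c_n=\infty$. This tree has two ends and the walk is transient, yet for \emph{every} edge the component containing the $-\infty$ ray has effective conductance to infinity equal to zero, so $AC=0$ identically. (The same phenomenon occurs with three or more ends if only one ray is transient.) So no purely geometric argument, and no single mass-transport identity of the kind you sketch, can close this step; what must be shown is that a \emph{stationary reversible} tree satisfying the integrability hypothesis cannot look like this example, and your proposal contains no argument to that effect.

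The paper's proof supplies exactly the two missing ingredients. First, from ``every edge on the trajectory has a recurrent side and a transient side'' it runs a mass transport with $f(T,o,x)=1/c(o,x)$ when $x$ is the unique neighbour of $o$ in the transient direction, obtaining $\Edg{\deg(o)}=2$ (this is where $\E{\frac{1}{c(o)}}<+\infty$ actually enters, as the normalization making the reversible measure unimodular); a second transport then shows that almost surely at most two edges at any vertex lead to infinite components, hence $T$ has at most two ends. Second, the two-ended case --- precisely the configuration of the counterexample above --- is excluded by an ergodic argument: transience forces $\sum 1/c<\infty$ along the ray toward which the walk escapes, while Birkhoff's theorem applied to the stationary events ``$x_n$ lies on the two-ended geodesic and all its incident conductances are at least $\epsilon$'' yields a positive frequency of such vertices, which is incompatible with that summability. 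Note also that $\E{\frac{1}{c(o)}}<+\infty$ is not what makes the speed formula applicable (each step has graph distance $1$, so $\E{d(x_0,x_1)}\le 1$ automatically); attributing it to that role is a sign that the place where the hypothesis genuinely does work --- the mass-transport normalization --- is missing from your outline.
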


\subsection{Weighted Canopy trees\label{oneendexample}}

We will now construct a reversible random tree with one end on which the simple random walk is transient but has zero speed almost surely. Similar examples are constructed in \cite[Section 5.3]{benjamini-curien} and \cite[Section 1.3]{gurel-nachmias}.

Consider the binary Canopy tree \(C\) (see \cite{aizenman-warzel}) which is constructed as follows:
\begin{enumerate}
 \item Start with countably many leaves, which are the vertices at level \(0\).
 \item For each \(n=0,1,2,\ldots\), group the vertices at level \(n\) into pairs and join each pair to a new vertex at level \(n+1\).
\end{enumerate}

Suppose that the edges joining a vertex at level \(n\) with a vertex at level \(n+1\) have conductance \(c_n = \lambda^n\) where \(1 < \lambda < 2\).

Since \(\sum\limits_{n = 0}^{+\infty}\frac{1}{c_n} < +\infty\) the simple random walk on \(C\) is transient.

Let \(x_0,x_1,\ldots\) be a simple random walk on \(C\) and \(l_n\) be the level of the vertex visited at the \(n\)-th step of the random walk one has
\[\Pof{l_{n+2}= k+1|l_n = k} = \frac{\lambda^{n}}{2\lambda^{n-1} + \lambda^n} = \frac{\lambda}{2+\lambda} < \frac{1}{2},\]
for all \(k = 1,2,3,\ldots\).

It follows that \(l_n, n = 0,1,2,\ldots\) is positively recurrent.

Letting \(p_k\) be the frequence with which \(l_n = k\), and rooting \(C\) at a random vertex \(o\) which has level \(k\) with probability \(p_k\), one obtains a reversible stationary random tree with conductances on which the simple random walk is almost surely transient.

Notice that there is essentially a unique boundary horofunction on \(C\), the caveat being that changing the root modifies this horofunction by an additive constant since we demand \(\xi(o) = 0\) for all horofunctions.  Up to an additive integer constant it is simply the function assigning to each vertex its level.

Applying Theorem \ref{furstenbergtheorem} and Proposition \ref{corollaryboundary} yields that the speed of the simple random walk on \(C\) is the expected value of the increment in level on the first step.   Since \(l_n\) is positively recurrent, this is zero.

Hence, we have constructed reversible random trees with conductances where the speed of the simple random walk is zero but the random walk is almost surely transient.

\subsection{Speed formulas}

The first step in the proof of Theorem \ref{electricaltheorem} is a speed formula which we obtain from Theorem \ref{furstenbergtheorem}.   It can be compared to the formulas given in \cite{lyons-permantle-peres1995} for Galton-Watson trees, which is essentially Lemma \ref{treehorofunctionlemma} below, and in \cite{gantert-muller-popov2012} for Galton-Watson trees with random conductances.

\begin{lemma}\label{electricspeedlemma}
Let \((T,o)\) be a reversible random tree with conductances, and \(x_0=o,x_1,x_2,\ldots\) a simple random walk on \((T,o)\).   Suppose the simple random walk is almost surely transient and has deterministic speed \(\ell\).

Then
\[\ell = \E{\frac{AC}{AB+AC+BC}}\]
where \(B\) is the conductance of the edge \(e\) between \(x_0\) and \(x_1\), \(A\) is the effective conductance between \(x_0\) and infinity after removing \(e\), and \(C\) is the effective conductance between \(x_1\) and infinity after removing \(e\).
\end{lemma}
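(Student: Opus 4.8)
The plan is to feed Theorem \ref{furstenbergtheorem} into the electrical structure of the tree. First I would invoke Theorem \ref{furstenbergtheorem} together with Proposition \ref{corollaryboundary} to produce a random \emph{boundary} horofunction $\xi$, independent of the first step $x_1$, with $\E{\ell} = -\E{\xi(x_1)-\xi(x_0)}$; since the speed is assumed deterministic this reads $\ell = -\E{\xi(x_1)}$ (recall $\xi(x_0)=\xi(o)=0$). On a tree the boundary horofunctions are exactly the Busemann functions $\xi_\eta$ of the ends $\eta$, and a direct computation from the definition $\xi_x(y)=d(x,o)-d(x,y)$ gives $\xi_\eta(x_1)=+1$ when the edge $e=\{x_0,x_1\}$ points toward $\eta$ (that is, when $\eta$ lies in the component $T_1$ of $T\setminus e$ containing $x_1$) and $\xi_\eta(x_1)=-1$ otherwise. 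Thus the whole problem reduces to computing the law of the side of $e$ on which the random end $\eta$ falls.

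Next I would identify that law. Because $\xi$ is built from the steps of negative index, the end $\eta$ is the limit of the reversed trajectory $x_0,x_{-1},x_{-2},\dots$; by reversibility this reversed trajectory is again a simple random walk from $o$, and by the Markov property it is independent of the forward step $x_1$, consistent with Proposition \ref{corollaryboundary}. Hence, conditionally on the weighted doubly-rooted tree $(T,x_0,x_1)$, the probability that $\eta\in T_1$ equals the probability that a simple random walk from $o$ escapes to infinity through the edge $e$. Here I would use the standard electrical fact (a short renewal argument, or see Lyons and Peres) that the walk escapes through a given branch with probability proportional to the effective conductance of that branch to infinity. The branch through $e$ has effective conductance $\tfrac{BC}{B+C}$, namely $e$ of conductance $B$ in series with the far side of conductance $C$, while the remaining branches at $x_0$ have total effective conductance $A$, so that
\[\Pof{\eta\in T_1\mid T,x_0,x_1}=\frac{BC/(B+C)}{A+BC/(B+C)}=\frac{BC}{AB+AC+BC}.\]
Combining this with $\ell=-\E{\xi_\eta(x_1)}$ and $\xi_\eta(x_1)=\pm 1$ yields
\[\ell=\E{\frac{AB+AC-BC}{AB+AC+BC}}.\]

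Finally I would symmetrize. Reversibility gives $(T,x_0,x_1)\overset{d}{=}(T,x_1,x_0)$, and this swap exchanges $A\leftrightarrow C$ while fixing $B$; averaging the displayed formula with its image under the swap collapses the numerator $AB+AC-BC$ to $AC$ and produces the claimed identity
\[\ell=\E{\frac{AC}{AB+AC+BC}}.\]
Equivalently, the difference $\tfrac{B(A-C)}{AB+AC+BC}$ is antisymmetric under the swap and hence has zero expectation under the reversible law.

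I expect the main obstacle to be the bookkeeping around conditioning and sign conventions: verifying that the end attached to $\xi$ is the backward end, that it is conditionally independent of $x_1$, and that the escape-through-a-branch probability is correctly matched to the effective conductances $A$, $B$, $C$ computed with $e$ removed. The transience hypothesis is exactly what guarantees that these effective conductances are positive and finite almost surely and that the reversed walk converges to a genuine end. The concluding symmetrization is short but is the decisive step, turning an asymmetric, harder-to-interpret expression into the clean formula; recognizing that reversibility annihilates the antisymmetric part is the key point.
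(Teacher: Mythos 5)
Your argument is essentially the paper's own proof: extend the walk by an independent backward walk to get a distance stationary bi-infinite sequence, apply Theorem \ref{furstenbergtheorem} to get $\ell=-\E{\xi(x_1)}$, identify $\xi(x_1)=\pm1$ according to which side of $e$ the backward walk escapes through, compute that escape probability as $\frac{BC}{AB+AC+BC}$ by the series/parallel reduction, and symmetrize via reversibility to collapse $\frac{AB+AC-BC}{AB+AC+BC}$ to $\frac{AC}{AB+AC+BC}$. The only cosmetic differences are that the paper first embeds $(T,o)$ into a fixed ambient tree (Lemma \ref{treeembeddinglemma}) so that Theorem \ref{furstenbergtheorem} applies in a single complete separable metric space, and it identifies $\xi$ as the Busemann function of the loop-erased backward ray directly from transience rather than through Proposition \ref{corollaryboundary}, whose properness hypothesis the ambient (countable-degree) tree need not satisfy.
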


Since Theorem \ref{furstenbergtheorem} requires a distance stationary sequence of a single complete separable metric space \(X\), we need to embed the reversible random tree \((T,o)\) into such a space.

For this purpose let \((X,o)\) be a rooted regular tree with root \(o\) where all vertices have countably many neighbors.  We assume furthermore that a total linear order with a minimal element is given for the children of each vertex \(x \in X\).

\begin{lemma}\label{treeembeddinglemma}
In the context of Theorem \ref{electricaltheorem}, there exists a random subtree \((T',o)\) of \((X,o)\) with the same distribution as \((T,o)\), and the additional property that almost surely for all \(x \in X\), if \(x \in T'\) has \(k\) children in \(T'\) then these are the first \(k\) children of \(x\) in the established ordering.
\end{lemma}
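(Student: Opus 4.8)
The plan is to reduce the statement to a single measure-theoretic fact: that one can choose, measurably in $T$, a total order of the children of every vertex whose order type is an initial segment of $\N$. Granting such a choice, I would define the embedding $\phi$ recursively, sending $o$ to $o$ and, whenever $v \in T$ has already been sent to $\phi(v) \in X$, sending the $i$-th child of $v$ (in the chosen order) to the $i$-th child of $\phi(v)$ in the fixed ordering of $X$. Since every vertex of $X$ has countably many children and $T$ has at most countably many children at each vertex, $\phi$ is a well-defined injective rooted-tree homomorphism, and setting $T' = \phi(T)$ with the pushed-forward conductances produces a subtree of $(X,o)$. By construction the children of any $x = \phi(v) \in T'$ are exactly the first $k$ children of $x$ in $X$, where $k$ is their number, so $T'$ has the required initial-segment property. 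Moreover $\phi$ is, for every realization, an isomorphism of rooted trees with conductances from $(T,o)$ onto $(T',o)$; hence the law of $(T',o)$, viewed as a random rooted conductance tree up to isomorphism, coincides with that of $(T,o)$ no matter how the orders were chosen. The only real content is therefore the measurable choice of orders.

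The difficulty is that there is no canonical order on the children of a vertex: two children may span isomorphic rooted subtrees (as already happens in the Canopy tree of Section \ref{oneendexample}), so any deterministic rule must break ties, and doing so measurably is not automatic in the presence of automorphisms. I would resolve this by enriching the probability space. Attach to each edge of $T$ an independent uniform $[0,1]$ mark, independent of $(T,o)$; almost surely all marks are distinct. At a vertex of finite degree, order its children by increasing mark and assign them ranks $1,\dots,k$. Because the marks are independent of $T$ and are discarded once $\phi$ has been built, the distribution of $(T',o)$ is unaffected by their introduction, and measurability is transparent, as the order is read off from finitely many real comparisons at each vertex.

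For vertices of infinite degree this tie-breaking is insufficient, since i.i.d.\ continuous marks order a countably infinite set with a dense rather than $\omega$-type order and so yield no enumeration. In that generality I would instead appeal to a measurable selection theorem: the set of embeddings of a given $(T,o)$ into $(X,o)$ with the initial-segment property is nonempty by the greedy construction above, and the relation ``$\phi$ is such an embedding of $T$'' is Borel in the pair $(T,\phi)$, so the Jankov--von Neumann uniformization theorem furnishes a universally measurable map $T \mapsto \phi_T$, whence $T' = \phi_T(T)$ is the desired random subtree. Alternatively, one may first fix, measurably in $T$, a representative of $(T,o)$ on a subset of $\N$ rooted at $0$, so that the children of each vertex inherit an enumeration from the natural order of $\N$.

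The step I expect to be the main obstacle is precisely this measurable handling of symmetry and of infinite degrees: producing, uniformly in $T$, an order of type an initial segment of $\N$ on each set of children. Once that is in hand, the remaining verifications — that $\phi$ is an embedding, that $T'$ has the initial-segment property, and that $(T',o) \cong (T,o)$ pointwise and hence in law — are routine.
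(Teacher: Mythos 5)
You correctly isolate the real content of the lemma --- a measurable choice, uniformly in \(T\), of an enumeration of each vertex's children of order type an initial segment of \(\N\) --- and your i.i.d.\ edge-mark construction settles the locally finite case. But the paper's resolution is genuinely different and more elementary: it runs countably many independent simple random walks on \((T,o)\) and maps each child to the lowest available child of the corresponding node of \(X\) \emph{in the order in which the walks first discover it}. This single device kills both of your obstacles at once. Measurability is immediate because the embedding is an explicit function of the tree and the auxiliary walk randomness, and --- crucially --- the infinite-degree case causes no trouble, because discovery times form a sequence, so the induced order on any (even countably infinite) set of children is automatically of type \(\omega\) rather than the dense order your i.i.d.\ continuous marks produce. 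Your fallback for infinite degrees is the weak point of the proposal: applying Jankov--von Neumann requires first equipping the isomorphism classes of not-necessarily-locally-finite rooted conductance trees with a standard Borel structure for which the relevant relation is analytic and the quotient map is Borel (delicate precisely when degrees are infinite, since the usual local topology arguments lean on local finiteness), and it returns only a universally measurable selection; while your ``alternative'' of first fixing a representative on a subset of \(\N\) measurably in \(T\) is essentially equivalent to the lemma itself, hence circular. Note also that the paper, like you, is content to enlarge the probability space with independent auxiliary randomness, so nothing is lost relative to your marks approach; the exploration order is simply the right auxiliary randomness to use. If you replace your marks by ``rank of first visit by a countable family of independent random walks'' (which a.s.\ exhaust the vertex set), your argument becomes the paper's proof, and the remaining verifications are routine exactly as you say.
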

\begin{proof}
Beginning with the identification of the root of \((T,o)\) with \(o \in X\), we may construct the required \(T'\) by considering countably many independent random walks on \((T,o)\) and each time a new child of a vertex is explored, mapping it to the lowest available child of the corresponding node in \((X,o)\).  Since the walks will almost surely visit all vertices of \((T,o)\), this gives a measurable random subgraph \(T'\) of \((X,o)\) which is isomorphic to \((T,o)\) almost surley.
\end{proof}

We assume from now on that \((T,o)\) is a random subtree of \((X,o)\) with random conductances as given by the previous lemma.

\begin{lemma}\label{treehorofunctionlemma}
In the context of Lemma \ref{electricspeedlemma}, let \(x_0'=o,x_1',\ldots\) be a simple random walk starting at \(o\) independent from \(x_0,x_1,\ldots\) and let \(\xi\) be the boundary horofunction correponding to the loop erased path obtained from \((x_n')\).  Then \(\ell = -\E{\xi(x_1)}\).
\end{lemma}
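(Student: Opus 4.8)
The plan is to realize the one-sided walk as the forward half of a bi-infinite distance stationary sequence, apply Theorem \ref{furstenbergtheorem}, and then identify the abstract horofunction it produces with the loop-erased-path horofunction of an independent walk. Using the embedding of Lemma \ref{treeembeddinglemma} we view everything in the fixed complete separable space \((X,o)\).

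First I would extend the simple random walk to a bi-infinite sequence \((x_n)_{n \in \Z}\). Because \((T,o)\) is stationary and reversible, the sequence of rooted trees \((T,x_n)_{n\in\Z}\) is stationary under the shift, so the array of distances \((\dist(x_m,x_n))_{m,n}\) is stationary and \((x_n)_{n \in \Z}\) is distance stationary. Since \(\dist(x_0,x_1)=1\), the mean first step is finite and Theorem \ref{furstenbergtheorem} applies; together with Proposition \ref{corollaryboundary} it produces a random \emph{boundary} horofunction \(\xi\) with \(\xi(o)=0\). As the speed is deterministic, the formula of Theorem \ref{furstenbergtheorem} reduces to \(\ell = -\E{\xi(x_1)}\), so it only remains to identify the joint law of the pair \((\xi,x_1)\).

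Second, I would describe \(\xi\) concretely. As indicated in the discussion following Theorem \ref{thm:A}, the horofunction \(\xi\) is a limit of the horofunctions \(\xi_{x_{-n}}(y)=\dist(x_{-n},o)-\dist(x_{-n},y)\) attached to the negative-time steps. In a transient tree the backward walk \(x_0,x_{-1},x_{-2},\ldots\) converges almost surely to an end \(\eta_-\), the functions \(\xi_{x_{-n}}\) converge to the Busemann function of \(\eta_-\), and \(\eta_-\) is exactly the end reached by the geodesic ray obtained from the backward path by loop erasure. Hence \(\xi\) is the boundary horofunction associated to the loop-erased path of the backward walk.

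Finally, I would pin down the joint law of \((\xi,x_1)\). Conditioned on \((T,o)\), the Markov property makes the future \((x_n)_{n\ge 0}\) and the past \((x_{-n})_{n\ge 0}\) independent, reversibility makes the past a simple random walk started at \(o\), and the marginal of \((T,o)\) under the bi-infinite stationary measure is the original reversible distribution. Since \(\xi\) is a function of the past alone while \(x_1\) is a function of the future alone, the pair \((\xi,x_1)\) has exactly the law in the statement, with \(\xi\) built from an independent walk \((x_n')\) and \(x_1\) the first step of \((x_n)\); substituting into \(\ell=-\E{\xi(x_1)}\) yields the lemma. The main obstacle is this identification step: verifying that the abstract limiting horofunction furnished by Theorem \ref{furstenbergtheorem} is genuinely the Busemann function of the backward end, that loop erasure of the backward walk selects the same end as the horofunction limit, and that these matchings respect the conditional independence supplied by the Markov property.
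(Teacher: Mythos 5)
Your proposal is correct and follows essentially the same route as the paper: build the bi-infinite sequence by attaching an independent backward walk (reversibility giving distance stationarity), apply Theorem \ref{furstenbergtheorem} with deterministic speed to get \(\ell = -\E{\xi(x_1)}\), and identify \(\xi\) with the Busemann function of the end selected by loop-erasing the backward path, using transience to see that the empirical measures \(\mu_n\) collapse to a Dirac mass there. One small caution: the ambient tree \((X,o)\) has vertices of infinite degree and hence is not proper, so Proposition \ref{corollaryboundary} does not literally apply; this is harmless because your direct identification of \(\xi\) already exhibits it as a boundary horofunction, which is exactly how the paper argues.
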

\begin{proof}
By reversibility, conditioned on \((T,x_1)\), one has that \(x_0\) has the distribution of the first step of a simple random walk on \(T\) starting at \(x_1\).

Letting \(x_{-n}, n = 0,1,2,\ldots\) be an simple random walk starting from \(o\) and independent from \(x_n,n=0,1,2,\ldots\), this implies that \((T,x_0, (x_n)_{n \in Z})\) has the same distribution as \((T,x_1,(x_{n+1})_{n \in \Z})\) in the space of rooted trees with bi-infinite paths.  In particular the sequence \((x_n)_{n \in \Z}\) is distance stationary in \((X,o)\).

By hypothesis, speed is deterministic and therefore \(\ell = -\E{\xi(x_1)}\) where \(\xi\) is the random horofunction on \(X\) given by Theorem \ref{furstenbergtheorem}. Notice that \(\E{\dist(x_0,x_1)} \le 1\) so the integrability condition is satisfied.

Notice also that since by hypothesis the simple random walk is transient, the sequence of probabilities
\[\mu_n = \frac{1}{n}\sum\limits_{i = 1}^n \delta_{\xi_{x_{-i}}}\]
converges almost surely to the Dirac delta on the unique horofunction which increases by \(1\) at each step of the path obtained by erasing all loops from \(x_0,x_{-1},x_{-2},\ldots\).
\end{proof}

\begin{proof}[Proof of Lemma \ref{electricspeedlemma}]
Compactify \(L\) with a single point \(-\infty\) at infinity and \(R\) with a single point \(+\infty\) at infinity.

With this notation \(A\) is the effective conductance between \(x_0\) and \(-\infty\) in \(L\), \(B\) is the conductance of the edge between \(x_0\) and \(x_1\), and \(C\) is the effective conductance between \(x_1\) and \(+\infty\) in \(R\).
Notice that since the simple random walk is transient \(\max(A,C) \neq 0\) almost surely.

From Lemma \ref{treehorofunctionlemma} one has \(\ell = -\E{\xi(x_1)}\).    Notice that \(\xi(x_1) = 1\) if and only if \(x_{-n} \in R\) for all \(n\) large enough, and \(\xi(x_1) = -1\) if and only if \(x_{-n} \in L\) for all \(n\) large enough.

In the network with 4 vertices \(-\infty,x_0,x_1,+\infty\) where the edges have conductances \(A,B,C\) respectively, notice that the second and third edges combined have effective conductance \(\frac{BC}{B+C}\).

From this it follows that
\[-\E{\xi(x_1)|(T,x_0,x_1)} = -\left(\frac{\frac{BC}{B+C}}{A + \frac{BC}{B+C}}-\frac{A}{A + \frac{BC}{B+C}}\right) = \frac{AB+AC-BC}{AB+AC+BC}.\]

Taking expected value and noticing that by reversibility 
\[\frac{AB}{AB + AC + BC}\quad \text{and} \quad\frac{BC}{AB + AC + BC}\]
have the same distribution one obtains
\begin{align*}
\ell &= -\E{\E{\xi(x_1)|(T,x_0,x_1)}}\\ & = \E{\frac{AB+AC-BC}{AB+AC+BC}} = \E{\frac{AC}{AB+AC+BC}},\end{align*}
as claimed.
\end{proof}

\subsection{Transient trees with zero speed have one end}

We will complete the proof of Theorem \ref{electricaltheorem} by starting from Lemma \ref{electricspeedlemma}, and showing that an almost surely transient reversible tree with \(\ell = 0\) has one end.

For this purpose we define
\[\Edg{*} = \frac{\E{*}}{\E{\frac{1}{c(o)}}}.\]

Notice that one has the following version of the mass-transport principle:
\[\Edg{\sum\limits_{x} f(T,o,x)c(o,x)} = \Edg{\sum\limits_{x} f(T,x,o)c(o,x)},\]
where \(f\) is any positive function on doubly rooted trees with conductances.

We denote by \(\deg(x)\) the number of neighbors of a vertex \(x\) in a graph.

\begin{lemma}
In the context of Lemma \ref{electricspeedlemma}, if \(\ell = 0\) then \(\Edg{\deg(o)} = 2\).
\end{lemma}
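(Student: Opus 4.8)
The plan is to use the speed formula of Lemma~\ref{electricspeedlemma} to orient the edges of $T$ canonically so that every vertex has out-degree exactly one, and then to read off $\Edg{\deg(o)}$ from the mass-transport principle displayed above. Throughout I write $A_x$ and $C_x$ for the effective conductances to infinity from $o$ and from $x$ in the two components of $T$ obtained by deleting the edge $\{o,x\}$.

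Since $\ell=\E{AC/(AB+AC+BC)}=0$ and the integrand is nonnegative, it must vanish almost surely; as the first step of the walk reaches each neighbour $x$ of $o$ with positive probability, this forces $A_xC_x=0$ for every edge $\{o,x\}$ incident to the root. Transience gives in addition that $\max(A_x,C_x)>0$ for each such edge, since the conductance from $o$ to infinity, which equals $\sum_{y\sim o}c(o,y)C_y/(c(o,y)+C_y)$, is positive. Hence deleting any edge incident to $o$ leaves exactly one transient component, and by unimodularity of $\Edg$ (a property holding at the root almost surely holds at all vertices almost surely) the same dichotomy holds at every edge. I would then orient each edge of $T$ towards the unique transient component it bounds.

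Next I would verify that the out-degree of every vertex equals one. An edge $\{o,x\}$ points out of $o$ exactly when the branch beyond $x$ is transient, i.e.\ when $C_x>0$. If two distinct neighbours $x\neq x'$ both had $C_x,C_{x'}>0$, then for the edge $\{o,x\}$ we would also get $A_x\ge c(o,x')C_{x'}/(c(o,x')+C_{x'})>0$, contradicting $A_xC_x=0$; so at most one neighbour has a transient branch. Transience forces at least one, so exactly one edge points out of $o$.

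Finally I would apply the mass-transport principle to the positive function $f(T,o,x)=c(o,x)^{-1}\mathbf 1[\{o,x\}\text{ is oriented }o\to x]$. Then $\sum_x f(T,o,x)c(o,x)=d^+(o)$, the out-degree of $o$, which is identically $1$, while $\sum_x f(T,x,o)c(o,x)=d^-(o)$, its in-degree. The principle therefore gives $\Edg{d^-(o)}=\Edg{d^+(o)}=\Edg{1}=1$, using that $\Edg$ is a probability measure. Since each edge at $o$ is oriented either inward or outward we have $\deg(o)=d^+(o)+d^-(o)$, and so $\Edg{\deg(o)}=2$. The crux is the second paragraph: converting the vanishing of the \emph{averaged} speed into the pointwise dichotomy ``exactly one transient side per edge'' and promoting it from the root to every edge; once that structure is in place the transport computation is routine.
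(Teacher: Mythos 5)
Your proof is correct and takes essentially the same route as the paper's: vanishing of the integrand $AC/(AB+AC+BC)$ gives the transient/recurrent dichotomy at every edge, the edges are oriented toward the transient side with out-degree exactly one, and the mass-transport principle applied to $f(T,o,x)=c(o,x)^{-1}\mathbf 1[o\to x]$ yields $\Edg{\deg(o)-1}=1$. The only cosmetic differences are that the paper propagates the dichotomy from the root to all edges by stationarity along the walk (plus the fact that every vertex is visited with positive probability) rather than by unimodularity, and you are somewhat more explicit than the paper in justifying that the transient neighbour of $o$ is unique.
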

\begin{proof}
In view of Lemma \ref{electricspeedlemma}, since \(\ell = 0\), one has that almost surely exactly one of the two trees \(L\) and \(R\) is recurrent and the other is transient.

By stationarity, it follows that almost surely for all \(n\) removing the edge between \(x_n\) and \(x_{n+1}\) from \(T\) leaves one recurrent tree and one transient subtree.
Hence, almost surely there is an infinite non-backing path \(y_0 = o,y_1,\ldots\) such that removing the edge \(e_n\) between \(y_n\) and \(y_{n+1}\) splits \(T\) into a transient and a recurrent tree for all \(n\), where the recurrent tree is the component containing \(y_0 = o\).

Let \(f(T,o,x)\) be \(1/c(o,x)\) if \(x\) is the unique neighbor of \(o\) in the transitive component of the graph obtained after removing \(o\).  Applying the mass-transport principle one has
\[1 = \Edg{\sum\limits_{x}f(T,o,x)c(o,x)} = \Edg{\sum\limits_{x}f(T,x,o)c(o,x)} = \Edg{\deg(o)-1}.\]
\end{proof}

We now repeat the argument from \cite[Theorem 13]{curien} which shows that having expected degree \(2\) has strong topological consecuences.

\begin{lemma}
In the context of Lemma \ref{electricspeedlemma}, if \(\ell = 0\) then almost surely \(T\) has one or two ends.
\end{lemma}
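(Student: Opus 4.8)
The plan is to deduce this topological statement from the identity $\Edg{\deg(o)} = 2$ proved in the previous lemma, using the conductance-weighted mass-transport principle together with a local-to-global dictionary between the \emph{excess degree} $\deg(v)-2$ and the number of ends of a tree. First I would record the elementary combinatorial identity that, writing $e(T)$ for the number of ends of a locally finite tree $T$,
\[ \sum_{v}\bigl(\deg(v) - 2\bigr) = e(T) - 2 \]
(interpreted in $\{-2,-1,0,1,\ldots,+\infty\}$). To obtain this I would decompose $T$ into its \emph{trunk} $S$, the subtree spanned by the ends, together with the maximal finite \emph{bushes} hanging off $S$. A direct count gives $\sum_{w\in B}(\deg(w)-2) = -1$ for each finite bush $B$ (a finite tree attached by a single edge), while $\sum_{v\in S}(\deg_S(v)-2) = e(T)-2$ and each attachment contributes one extra unit to $\sum_{v\in S}(\deg_T(v)-2)$; summing, the bushes cancel the attachment terms and leave $e(T)-2$. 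In particular $T$ has at most two ends if and only if it has no \emph{trifurcation}, i.e.\ no vertex $v$ for which $T\setminus\{v\}$ has at least three infinite components.

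Second I would translate this into a statement about densities. The identity shows that every infinite-branch trifurcation produces, after the neighbouring bushes and leaves are accounted for, a strict local surplus of degree over $2$. Using the conductance-weighted mass-transport principle I would charge each finite bush and each leaf against the trunk vertex to which it is attached, arranging the transport so that a positive density of trifurcations would force $\Edg{\deg(o)} > 2$. Since $\Edg{\deg(o)} = 2$, this yields $\Pdg(o \text{ is a trifurcation}) = 0$, and then, by the re-rooting invariance of $\Pdg$, that $T$ has no trifurcation almost surely; by the first step this is exactly the assertion that $T$ has at most two ends. The complementary possibility, that $T$ has finitely many but at least three ends, I would rule out separately: the trifurcations then form a nonempty \emph{finite} canonical vertex set, and transporting unit mass from each vertex to its nearest trifurcation would push infinite mass into a set of finite expected size, contradicting the mass-transport principle. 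This is the step for which we follow \cite[Theorem 13]{curien}.

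The main obstacle is the second step, namely converting the formal identity $\sum_v(\deg(v)-2) = e(T)-2$ into an honest inequality between expectations. The delicate point is that leaves and finite bushes have $\deg - 2 < 0$ and are typically abundant, so they depress $\Edg{\deg(o)}$ below $2$; the transport must be chosen so that these negative contributions are matched \emph{exactly} against the attachment edges on the trunk, leaving the positive contribution of each infinite-branch trifurcation uncancelled. Making this bookkeeping precise, so that any positive density of trifurcations strictly raises $\Edg{\deg(o)}$ above $2$, is the technical heart of the argument.
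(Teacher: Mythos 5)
Your strategy---leveraging $\Edg{\deg(o)}=2$ through a mass transport that detects trifurcations---is the right one, but the route you choose leaves the decisive step open and, as set up, has two concrete obstacles. First, the identity $\sum_v(\deg(v)-2)=e(T)-2$ is only a conditionally convergent rearrangement on an infinite tree (in general there are infinitely many $+1$ terms from trunk vertices and infinitely many $-1$ terms from leaves), so it cannot simply be integrated against $\Edg{\cdot}$; everything hinges on the local bookkeeping you defer to the last paragraph, and that bookkeeping is precisely the content of the lemma. Second, charging an entire finite bush to its attachment point on the trunk is a transport between non-adjacent vertices, whereas the mass-transport principle set up in this section is the single-edge identity $\Edg{\sum_x f(T,o,x)c(o,x)}=\Edg{\sum_x f(T,x,o)c(o,x)}$ coming from reversibility; you would need the full unimodular principle over all pairs, and you would also have to handle the trunk being undefined on the event that $T$ has one end.

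The paper closes the gap with a single edge-local transport that performs your ``exact cancellation'' automatically: take $f(T,x,y)=1/c(x,y)$ if deleting the edge between $x$ and $y$ leaves $x$ in a finite component, and $0$ otherwise. Writing $I(o)$ for the number of edges at $o$ whose far side is infinite and $J(o)$ for the number whose near side is finite, one has deterministically
\[
\deg(o)+\sum_x f(T,o,x)c(o,x)-\sum_x f(T,x,o)c(o,x)=I(o)+J(o)\ge 2,
\]
since either $I(o)\ge 2$, or $I(o)=1$ and then (the tree being infinite and locally finite) the near side of the unique infinite direction is finite, so $J(o)\ge 1$. By the mass-transport principle the two sums have equal $\Edg{\cdot}$-expectation, so the left-hand side has expectation $\Edg{\deg(o)}=2$ and hence equals $2$ almost surely; but at a trifurcation $I(o)\ge 3$. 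This excludes trifurcations at the root, and then at every vertex by stationarity together with the fact that every vertex is visited with positive probability---which also disposes of your separate ``finitely many but at least three ends'' case without a second transport. I recommend replacing the trunk/bush decomposition by this local count of edges separating off finite pieces.
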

\begin{proof}
Let \(f(T,x,y) = 1/c(x,y)\) if removing the edge between \(x\) and \(y\) leaves \(x\) in a finite component.

Deterministically one has
\[\deg(o) + \sum\limits_{x}f(T,o,x)c(o,x) - \sum\limits_{x}f(T,x,o)c(o,x) \ge 2.\]
But applying \(\Edg{*}\), by hypothesis both sides of the inequality have the same expected value.  It follows that both sides are equal almost surely.

Notice that if there are three infinite trees connected at \(o\), then the left hand side is at least 3.  Hence, almost surely there are at most two edges connecting \(o\) to infinite components.

By stationarity this holds almost surely at all vertices visited by the walk.   Since all vertices have positive probability of being visited, this holds almost surely at all vertices of  \(T\).  Hence, \(T\) almost surely has one or two ends as claimed.
\end{proof}

\begin{lemma}
In the context of Lemma \ref{electricspeedlemma}, if \(\ell = 0\) then \((T,o)\) has one end almost surely.
\end{lemma}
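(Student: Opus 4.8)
The plan is to argue by contradiction, ruling out the possibility of two ends by combining the speed formula with a recurrence (conservativity) argument for stationary increment sequences. By the previous lemma we already know that, when \(\ell = 0\), almost surely \(T\) has one or two ends, so it suffices to show that the event that \(T\) has exactly two ends has probability zero. Suppose for contradiction that this event has positive probability. Since having two ends is a property of the unrooted weighted tree, conditioning on it preserves reversibility and distance stationarity, so I may assume it holds almost surely and work on an ergodic component.

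On this event the forward walk \(x_0 = o, x_1, \ldots\), being transient, converges to a boundary end \(\eta_+\); let \(\xi\) denote the corresponding boundary horofunction on the ambient tree \(X\), normalized by \(\xi(o) = 0\) and increasing toward \(\eta_+\). First I would verify that \(\bigl(\xi(x_{n+1}) - \xi(x_n)\bigr)_{n \in \Z}\) is a stationary sequence: the end \(\eta_+\) is unchanged when the bi-infinite walk is shifted (it is a canonical, root-independent object, e.g.\ the end of the loop-erased forward ray), and \(\xi(x_n)\) is a limit of the distance combinations \(d(o,z) - d(x_n,z)\) along that ray, so distance stationarity of \((x_n)_{n \in \Z}\) transfers to stationarity of the increments. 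These increments are bounded by \(d(x_n,x_{n+1}) \le 1\), hence integrable, and \(\xi\) takes integer values on \(X\).

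Next I would extract two competing bounds on the mean increment \(m = \E{\xi(x_1) - \xi(x_0)}\). On one hand, since \(x_n \to \eta_+\) we have \(\xi(x_n) \to +\infty\), and Birkhoff's theorem gives \(\xi(x_n)/n \to m\), forcing \(m \ge 0\). On the other hand the horofunction is \(1\)-Lipschitz with \(\xi(o)=0\), so \(\xi(x_n) \le d(o,x_n)\) and therefore \(m = \lim \xi(x_n)/n \le \lim \tfrac1n d(o,x_n) = \ell = 0\). Hence \(m = 0\): read along the walk, \(\xi\) is an integer-valued stationary-increment sequence with zero mean that nonetheless tends to \(+\infty\).

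The crux, and the step I expect to be the main obstacle, is to turn this into a contradiction: a mean-zero integrable stationary increment sequence cannot drift to infinity. By Atkinson's conservativity theorem, applied on each ergodic component, the cocycle is recurrent, so \(\liminf_n |\xi(x_n)| = 0\) almost surely and \(\xi(x_n)\) returns to \(0\) infinitely often, contradicting \(\xi(x_n) \to +\infty\). Thus two ends occur with probability zero, and together with the previous lemma this shows \((T,o)\) has exactly one end almost surely. I anticipate the only delicate points to be the justification that conditioning on the two-ended event keeps the sequence distance stationary, and the verification that the horofunction increments are genuinely stationary rather than merely shift-covariant; both should follow from the canonical nature of the forward end \(\eta_+\).
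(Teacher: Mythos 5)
Your overall strategy is genuinely different from the paper's: the paper observes that on the two-ended event the tree is a bi-infinite geodesic $\gamma$ with finite bushes attached, that transience forces the resistances along the escape ray to be summable, and then derives a contradiction from Birkhoff's theorem applied to a stationary sequence of events along the walk. You instead run a cocycle argument (stationary, integrable, mean-zero Busemann increments plus Atkinson's recurrence theorem), which is an attractive route and most of it is sound: the conditioning, the stationarity of the increments via the shift-equivariance of the escape end, and the conclusion $m=0$ are all fine (indeed $m=0$ follows even more directly from $|\xi(x_n)|\le d(x_0,x_n)$ and $\ell=0$, without needing $\xi(x_n)\to+\infty$ for the lower bound).

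The gap is the final contradiction: convergence $x_n\to\eta_+$ in the end topology does \emph{not} imply $\xi_{\eta_+}(x_n)\to+\infty$. In a two-ended tree one has $\xi_{\eta_+}(y)=p(y)-d(y,\gamma)+\mathrm{const}$, where $p(y)$ is the (signed) position of the projection of $y$ onto $\gamma$; convergence to $\eta_+$ only forces $p(x_n)\to+\infty$, while $d(x_n,\gamma)$ can be arbitrarily large if the finite bushes have unbounded depth, so $\xi_{\eta_+}(x_n)$ need not tend to $+\infty$ (only $\limsup_n\xi_{\eta_+}(x_n)=+\infty$ is automatic). Consequently Atkinson's conclusion that $\xi_{\eta_+}(x_n)$ returns to $0$ infinitely often contradicts nothing as it stands: the horosphere $\{\xi_{\eta_+}=0\}$ may be infinite, so it does not even contradict transience. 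The fix is small: apply exactly your argument to the Busemann function $\xi_{\eta_-}$ of the \emph{opposite} end, for which $\xi_{\eta_-}(x_n)=\mathrm{const}-p(x_n)-d(x_n,\gamma)\to-\infty$ genuinely follows from $p(x_n)\to+\infty$ and $d(x_n,\gamma)\ge 0$; alternatively, run Atkinson on the increments of $p(x_n)$ itself, whose level sets are the finite bushes, so infinitely many returns contradict transience directly. With that repair your proof is complete and is a legitimate alternative to the one in the paper.
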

\begin{proof}
Suppose, for the sake of contradiction, that \(\ell = 0\) but \((T,o)\) has more than one end with positive probability.  We have shown that \((T,o)\) almost surely has at most two ends.   Conditioning we may assume from now on that \((T,o)\) has exactly two ends almost surley.

Let \(x_0=o,x_1,x_2,\ldots\) be a simple random walk on \((T,o)\) starting at \(o\).

Since \((T,o)\) almost surely has two ends there is a unique geodesic path joining the two.

Splitting this geodesic into two rays, it follows from transience of the random walk that the sum of inverses of the conductances along one of the two rays must be finite, and that this is the end to which all simple random walks on \((T,o)\) converge.

Given \(\epsilon > 0\), consider the event \(A_{n,\epsilon}\) that \(x_n\) is on the geodesic joining the two ends of \(T\) and all edges at \(x_n\) have conductance at least \(\epsilon\).

Notice that this sequence of events is stationary and for some small enough \(\epsilon > 0\) each \(A_{n,\epsilon}\) has positive probability. 

It follows from Birkhoff's ergodic theorem that with positive probability
\[\lim\limits_{n \to +\infty}\frac{1}{n}\sum\limits_{k = 0}^{n-1}\mathds{1}_{A_{k,\epsilon}} > 0.\]

But this contradicts the fact that the sums of inverse conductances converges along the ray pointing towards the end to which the simple random walk converges.
\end{proof}

\section{Application to Bernoulli percolation clusters}\label{sec:percolation}

In this section we will show how Theorem \ref{furstenbergtheorem} allows one to obtain criteria for positive speed of the simple random walk on infinite percolation clusters of certain Cayley graphs.   We will also obtain estimates on the speed in certain cases.  The results belong to the same context as those of \cite{benjamini-lyons-schramm1999} but are new as far as the authors are aware.

\subsection{Bernoulli percolation clusters on Cayley graphs}

Suppose \(G\) is a finitely generated group and \(F\) is a finite symmetric generator (i.e. \(g \in F\) implies \(g^{-1} \in F\)).   Slighly abusing notation denote by \((G,o)\) the Cayley graph of \(G\) with respect to \(F\) rooted at the identity element \(o\).  Here the vertices are elements of \(G\) and two elements \(x,y\) are connected by a single undirected edge if \(x = yg\) for some \(g \in F\).

We consider on \((G,o)\) a distance which is left \(G\)-invariant but is not necessarily the graph distance.   On all subgraphs of \(G\) we will consider the restriction of this distance function, in particular the boundary horofunctions on a subgraph are a subset of those of \(G\).   It will also be useful to consider for each boundary horofunction \(\xi\) the function \(f_\xi(x) = \dist(o,x) + \xi(x)\) which is always non-negative.

We now describe the \(p\)-Bernoulli percolation \(G_p\) on \(G\) as defined in \cite{benjamini-lyons-schramm1999}.

For each edge \(e\) in \(G\) consider an independent random variable \(u(e)\) uniformly distributed in \([0,1]\).

For each \(p \in [0,1]\) we let \((G_p,o)\) be the connected component of \(o\) in the subgraph of \(G\) consisting of edges with \(u(e) < p\).

It is well known (see for example \cite[Theorem 2.1]{haggstrom-peres}) that \((G_p,o)\) is unimodular, and hence stationary and reversible if its distribution is biased by a density proportional to the number of neighbors of \(o\).   This extends trivially to the extra data of the restriction of the ambient distance coming from \(G\) to \(G_p\).

It was shown in \cite[Lemma 4.2]{benjamini-lyons-schramm1999} that the speed \(\ell_p\) of the simple random walk on \(G_p\) conditioned on \(G_p\) being infinite is deterministic.

\begin{theorem}[Speed estimate for Bernoulli percolation clusters]\label{bernoullipercolationtheorem}
In the context above suppose that \(\sum\limits_{x \in F}\xi(x) < 0\) for all boundary horofunctions \(\xi\).

Then \(\ell_p > 0\) for all \(p\) sufficiently close to \(1\), and for all \(p \neq 0\) one has
\[\ell_p \ge \frac{1}{|F|}\sum\limits_{x \in F}\dist(o,x) - \frac{1}{p|F|}\max\limits_{\xi}\sum\limits_{x \in F}f_\xi(x),\]
where the maximum is over all boundary horofunctions.
\end{theorem}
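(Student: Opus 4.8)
The plan is to apply Theorem \ref{furstenbergtheorem} to the simple random walk $(x_n)_{n \in \Z}$ on the infinite cluster $G_p$, viewed as a distance stationary sequence in the metric space $(G, \dist)$. Since $(G_p,o)$ biased by root degree is stationary and reversible, the bi-infinite walk is distance stationary, and the speed $\ell_p$ is deterministic by \cite[Lemma 4.2]{benjamini-lyons-schramm1999}. The integrability of the first step follows because each step moves by a single generator, so $\dist(x_0,x_1) \le \max_{g \in F}\dist(o,g) < +\infty$. Theorem \ref{furstenbergtheorem} then produces a random boundary horofunction $\xi$ with $\ell_p = -\E{\xi(x_1)}$, and by Proposition \ref{corollaryboundary} we may assume $\xi$ is a genuine boundary horofunction independent of the first step $x_1$.

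Next I would rewrite $\xi(x_1)$ using the function $f_\xi(x) = \dist(o,x)+\xi(x)$, so that $\xi(x_1) = f_\xi(x_1) - \dist(o,x_1)$ and hence
\[
\ell_p = -\E{\xi(x_1)} = \E{\dist(o,x_1)} - \E{f_\xi(x_1)}.
\]
The strategy is to bound each term. For the first term, the key point is that conditioned on the cluster, $x_1$ is distributed according to the simple random walk step, and by reversibility / the unimodular structure the generator $x_1$ is, in expectation, a uniformly chosen element of $F$; this should yield $\E{\dist(o,x_1)} = \frac{1}{|F|}\sum_{x \in F}\dist(o,x)$, or at least this quantity as the relevant lower bound. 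For the second term I would use that $f_\xi \ge 0$ and that $x_1$ is one of the neighbors of $o$ in $G_p$; the worst case is controlled by $\max_\xi \sum_{x \in F} f_\xi(x)$, and the factor $1/p$ enters because in the percolation cluster a given generating edge is present with probability (comparable to) $p$, so the conditional law of $x_1$ puts mass on each available generator, and dividing by $p$ accounts for edges being deleted.

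For the qualitative claim that $\ell_p > 0$ for $p$ close to $1$, I would argue by a continuity/limiting argument: as $p \to 1$ the cluster converges to all of $G$ and the conditional law of $x_1$ converges to the uniform law on $F$, so $\E{f_\xi(x_1)} \to \frac{1}{|F|}\sum_{x\in F} f_\xi(x) = \frac{1}{|F|}\big(\sum_{x\in F}\dist(o,x) + \sum_{x \in F}\xi(x)\big)$. Since hypothesis \eqref{eq:hyperbolicity} gives $\sum_{x\in F}\xi(x) < 0$ uniformly (here one needs that the boundary horofunctions range over a compact set so the strict negativity is bounded away from $0$), the expression $\E{\dist(o,x_1)} - \E{f_\xi(x_1)}$ converges to a strictly positive limit, giving $\ell_p > 0$ for $p$ near $1$.

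The main obstacle I anticipate is making the relationship between the conditional distribution of the first step $x_1$ and the uniform measure on $F$ precise, and extracting the exact factor $1/p$ in the lower bound. The subtlety is that $x_1$ is a neighbor of $o$ in $G_p$ chosen with probability proportional to the number of present edges, and its law is coupled to the random cluster; one must use reversibility and the independence of $\xi$ from $x_1$ carefully, together with the fact that each edge incident to $o$ is open with probability $p$, to turn the sum over present generators into the full sum over $F$ at the cost of the factor $1/p$. Controlling the maximum over all boundary horofunctions — and ensuring the supremum is attained or can be replaced by a finite bound — is where the geometry of $\hat G \setminus G$ must be invoked, and is the step requiring the most care.
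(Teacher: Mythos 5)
Your proposal follows essentially the same route as the paper: degree-bias the law of $(G_p,o)$ to make the bi-infinite walk distance stationary, apply Theorem \ref{furstenbergtheorem} together with Proposition \ref{corollaryboundary} (whose hypothesis holds because the walk on an infinite connected graph leaves every finite set in probability), and split $-\xi'(x_1)$ as $f_{\xi'}(x_1)-\dist(o,x_1)$ with $f_{\xi'}\ge 0$. The two difficulties you flag are handled in the paper just as you anticipate: conditional uniformity of $x_1$ over the neighbors of $o$ (and the conditional independence of $\xi'$, built from the backward walk, from $x_1$) gives $\E{f_{\xi'}(x_1)\deg(o)}=\E{\sum_{x\sim o}f_{\xi'}(x)}\le\E{\sum_{x\in F}f_{\xi'}(x)}$ since $f_{\xi'}\ge0$, the factor $1/p$ enters via $\E{\deg(o)}\ge p|F|$, and compactness of $\widehat{G}\setminus G$ makes the maximum attained and the limiting lower bound $-\frac{1}{|F|}\max_\xi\sum_{x\in F}\xi(x)$ strictly positive as $p\to 1$.
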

\begin{proof}
 Let \(y_0=o,y_1,\ldots\) and \(y_0'=o,y_1',\ldots\) be two independent random walks on \(G_p\), both starting at \(o\), and let \(x_n = y_n\) if \(n \ge 0\), while \(x_n = y_{-n}'\) if \(n \le 0\).  Let \(\deg(o)\) denote the degree of \(o\) in \(G_p\).
 
 Notice that \((x_n)_{n \in \Z}\) is distance stationary under the probability biased by \(\deg(o)\), and let \(\xi'\) be the horofunction given by Theorem \ref{furstenbergtheorem}.
 
 The simple random walk on an infinite connected (unweighted) graph cannot be strongly recurrent. In effect, the probability that the walk is in any fixed finite set goes to zero as \(n\) goes to infinity.  Therefore, by Proposition \ref{corollaryboundary} one has that \(\xi'\) is a boundary horofunction.
 
 For each boundary horofunction \(\xi\) let \(f_\xi(x) = \xi(x) + \dist(o,x)\) and notice that \(f_\xi \ge 0\).
 
 Since \(\ell_p\) is deterministic, one obtains 
 \begin{align*}
 \frac{1}{|F|}\sum\limits_{x \in F}\dist(o,x) - \ell_p &= \E{\deg(o)}^{-1}\E{(\dist(x_0,x_1) + \xi'(x_1))\deg(o)} 
 \\ &= \E{\deg(o)}^{-1}\E{\sum\limits_{x \sim o}f_{\xi'}(x)} \le \frac{1}{p|F|}\E{\sum\limits_{x \in F}f_{\xi'}(x)}
 \\ &\le \frac{1}{p|F|}\max\limits_{\xi}\sum\limits_{x \in F}f_\xi(x).
 \end{align*}
\end{proof}

We will give an example where the above result can be applied for a Fuchsian group \(G\) using the distance coming from the hyperbolic plane.   However, the following question seems natural:
\begin{question}
 On which groups \(G\) and finite symmetric generating sets \(F\) does there exist a left invariant distance such that
 \[\sum\limits_{x \in F}\xi(x) < 0\]
 for all boundary horofunctions \(\xi\).
\end{question}

The condition seems related to hyperbolicity.  In fact one has the following:
\begin{remark}\label{rmk}
 Let \(G\) be a group and \(F\) a finite symmetric generating set.   The condition above is satisfied if and only if 
 \[\sum\limits_{x \in F}(\xi|x)_o  \le \frac{1}{2}\sum\limits_{x \in F}\dist(o,x)\]
 where \((x,y)_o\) is the Gromov product on \(G\) with base point $o$.
\end{remark}

\subsubsection{Hyperbolic Bernoulli percolation clusters}

Given natural numbers \(P,Q\) with \(\frac{1}{P} + \frac{1}{Q} < \frac{1}{2}\) we consider the regular tiling of the hyperbolic plane by regular \(P\)-gons with \(Q\) at each vertex.  Let \(o\) be a fixed point in the hyperbolic plane which will be a vertex of the tiling for all \(P,Q\).

For each \(P,Q\) the tiling can be identified with the Cayley graph of the cocompact Fuchsian group \(G\) generated by the set \(F\) of central symmetries with respect to the midpoints of edges incident to \(o\).  On \(G\) we consider the left invariant distance which comes from the hyperbolic distance via this identification.

\begin{figure}[h!]
\centering
\includegraphics[scale=0.45]{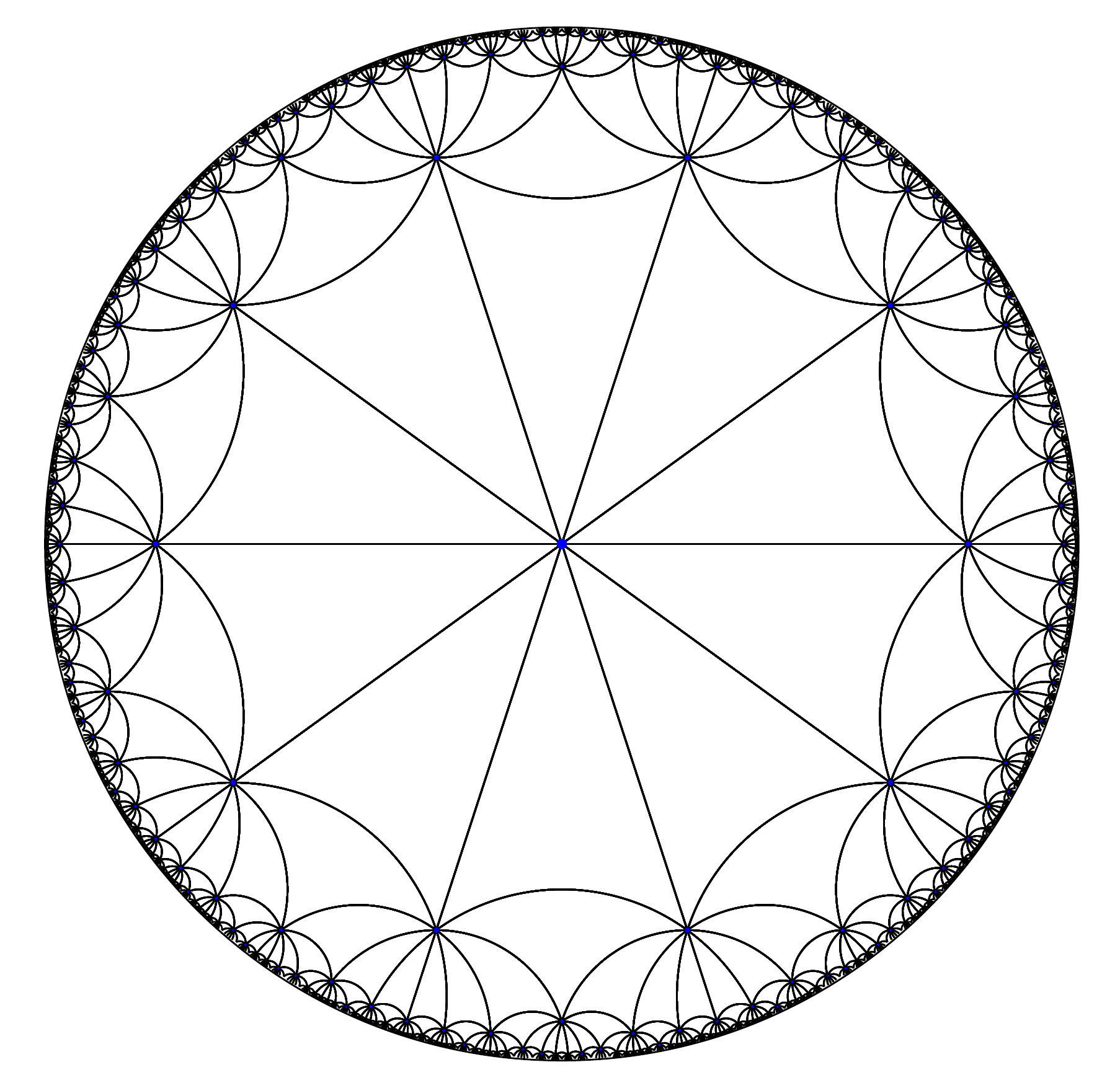}
\caption{\label{tessellation3-10figure}A tesselation by regular \(P\)-gons with \(Q\)-meeting at each vertex in the Poincaré disk model (here \(P=3\) and \(Q=10\)).}
\end{figure}

Consider a Bernoulli edge percolation \(G_p\) as in Theorem \ref{bernoullipercolationtheorem}.  And let \(\ell_p(P,Q)\) be the speed of the simple random walk on \(G_p\) conditioned on \(G_p\) being infinite.   From Theorem \ref{bernoullipercolationtheorem}, and the Lemmas \ref{hyperboliclemma1} and \ref{hyperboliclemma2} below, we obtain:
\begin{theorem}[Speed on percolation clusters of a hyperbolic tiling]\label{tilingtheorem}
In the above context \(\ell_p(P,Q) \ge 2\log(Q) - \frac{1}{p}O(\log(\log(Q)))\) when \(Q \to +\infty\), where the right hand side is independent of \(P\).
\end{theorem}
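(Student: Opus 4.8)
The plan is to substitute the explicit geometry of the $(P,Q)$ tiling into the lower bound provided by Theorem~\ref{bernoullipercolationtheorem}. The generating set $F$ consists of the central symmetries about the midpoints of the $Q$ edges issuing from $o$, so $|F|=Q$, and each $x\in F$ carries $o$ to an adjacent vertex; hence $\dist(o,x)=r$, the common hyperbolic length of an edge, for every $x\in F$. Consequently the first term in Theorem~\ref{bernoullipercolationtheorem} is exactly $\frac1{|F|}\sum_{x\in F}\dist(o,x)=r$, and the whole estimate reduces to the two asymptotic statements $r=2\log Q+O(1)$ and $\frac1{|F|}\max_\xi\sum_{x\in F}f_\xi(x)=O(\log\log Q)$, both uniform in $P$; these are the roles of Lemmas~\ref{hyperboliclemma1} and~\ref{hyperboliclemma2}. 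Combining them and absorbing the $O(1)$ edge-length constant into the (larger) correction term, using $\frac1p\ge1$, yields $\ell_p(P,Q)\ge r-\frac1p O(\log\log Q)\ge 2\log Q-\frac1p O(\log\log Q)$.

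For the edge length I would use the standard right triangle with vertices at the centre of a $P$-gon, an incident vertex, and the midpoint of the edge between them; its acute angles are $\pi/P$ at the centre and $\pi/Q$ at the vertex. Hyperbolic trigonometry of this triangle gives $\sinh(r/2)=\sin(\pi/P)\sinh R$ together with $\cosh R=\cot(\pi/P)\cot(\pi/Q)$, where $R$ is the circumradius. Eliminating $R$ and letting $Q\to\infty$, where $\cot(\pi/Q)\sim Q/\pi$ and $\sinh R\sim\cosh R$, gives $\sinh(r/2)\sim\cos(\pi/P)\,Q/\pi$ and therefore $r=2\log Q+2\log\!\left(2\cos(\pi/P)/\pi\right)+o(1)$. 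The additive term lies between its values at $P=3$ and $P=\infty$, so it is bounded uniformly, giving $r=2\log Q+O(1)$ uniformly in $P$.

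The heart of the matter is the bound on $\sum_{x\in F}f_\xi(x)$. Writing $\eta\in\partial\H$ for the ideal endpoint of the horofunction $\xi$ and using the identity $f_\xi(x)=\dist(o,x)+\xi(x)=2(x\mid\eta)_o$ (the Gromov product already appearing in Remark~\ref{rmk}), a law-of-cosines computation in $\H$ gives
\[ f_\xi(x)=r-\log\left(\cosh r-\sinh r\cos\theta\right), \]
where $\theta$ is the angle at $o$ between the segment $[o,x]$ and the ray towards $\eta$. Because the $Q$ edges at $o$ are equally spaced, the generator directions are $\theta_k=2\pi k/Q$, and maximising over $\xi$ becomes maximising $S(\phi)=\sum_{k=0}^{Q-1}f_\xi(x_k)$, with $\theta=\theta_k-\phi$, over the direction $\phi$ of $\eta$. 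From $\cosh r-\sinh r\cos\theta\ge e^{r}\sin^2(\theta/2)$ one obtains the clean pointwise bound $f_\xi\le -2\log|\sin(\theta/2)|$, alongside the crude global bound $f_\xi\le 2r$.

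The main obstacle is that $-2\log|\sin(\theta/2)|$ diverges for the generator whose direction is nearest to $\eta$, exactly when $\phi$ aligns with some $\theta_{k_0}$. I would resolve this by peeling off that single term, bounding it by $2r$, and evaluating the remaining sum exactly through the product identity $\prod_{k=0}^{Q-1}|2\sin(\pi k/Q-\phi/2)|=2|\sin(Q\phi/2)|$. The apparent logarithmic singularity of the full sum at $\phi=\theta_{k_0}$ then cancels precisely against the logarithmic term created by deleting the $k_0$ summand, leaving a bounded remainder; concretely $S(\phi)\le 2r+2(Q-1)\log 2-2\log Q+O(1)$. Dividing by $|F|=Q$ and inserting $r=2\log Q+O(1)$ gives $\frac1{|F|}\max_\xi\sum_{x\in F}f_\xi(x)\le 2\log 2+o(1)=O(1)$, comfortably inside the claimed $O(\log\log Q)$. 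Substituting both estimates into Theorem~\ref{bernoullipercolationtheorem} finishes the proof, with every error term uniform in $P$.
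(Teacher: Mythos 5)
Your proposal is correct and reaches the stated bound, but the way you establish the key estimate is genuinely different from the paper's, and in fact sharper. The overall skeleton is the same: plug the tiling's geometry into Theorem~\ref{bernoullipercolationtheorem}, show the edge length satisfies $r(P,Q)=2\log Q+O(1)$ uniformly in $P$ (your law-of-sines-plus-circumradius route is equivalent to the paper's direct identity $\cosh(r/2)=\cos(\pi/P)/\sin(\pi/Q)$ from Lemma~\ref{hyperboliclemma1}), and bound $\frac1Q\max_\xi\sum_{x\in F}f_\xi(x)$. For that last step the paper (Lemma~\ref{hyperboliclemma2}, via Lemma~\ref{conesetlemma}) uses a two-level threshold argument: the superlevel set $\{f_\xi>2\log\log Q\}$ is an ellipse-bounded region of visual angle $O(1/\log Q)$, so only $O(Q/\log Q)$ generators can exceed the threshold, each contributing at most $2r$; this yields $O(\log\log Q)$ and nothing better, since optimizing the threshold $t$ in $t+O(e^{-t/2}\log Q)$ gives exactly $t=2\log\log Q$. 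You instead use the exact Busemann formula $f_\xi(x)=r-\log\left(e^{-r}\cos^2(\theta/2)+e^{r}\sin^2(\theta/2)\right)$, the resulting pointwise bound $f_\xi\le -2\log|\sin(\theta/2)|$, and the product identity for sines, peeling off the single divergent term; this gives $\frac1Q\max_\xi\sum_{x\in F}f_\xi(x)\le 2\log 2+o(1)=O(1)$, which of course implies the $O(\log\log Q)$ needed here and would even improve the correction term in Theorem~\ref{tilingtheorem} and the error in Theorem~\ref{dimdrop}. Two small points you leave implicit, though no more so than the paper does: the boundary horofunctions of the orbit $Go$ with the induced metric are restrictions of genuine Busemann functions of $\H^2$ (needed to introduce the ideal point $\eta$ and the angle $\theta$), and the hypothesis $\sum_{x\in F}\xi(x)<0$ of Theorem~\ref{bernoullipercolationtheorem} holds for all large $Q$ precisely because your two estimates give $\sum_{x\in F}f_\xi(x)\le O(Q)\ll Q\,r(P,Q)$.
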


To establish the above result we need only to estimate the distance \(r(P,Q)\) between neighboring vertices of the tiling, and the maximum possible sum of \(f_\xi\) over all neighbors of \(o\).
\begin{lemma}\label{hyperboliclemma1}
Uniformly in \(P\) one has \(r(P,Q) = 2\log(Q) + O(1)\) when \(Q \to +\infty\).
\end{lemma}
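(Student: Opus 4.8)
The plan is to reduce the computation of $r(P,Q)$ to a single right-triangle identity in the hyperbolic plane and then read off the asymptotics by expanding $\acosh$. First I would decompose a regular $P$-gon of the tiling into $2P$ congruent right triangles by joining its center $O$ to its vertices and to the midpoints of its edges. Each such triangle has vertices at $O$, at a vertex $V$ of the polygon, and at the midpoint $M$ of an incident edge. The angle at $O$ is $\pi/P$ (half of the central angle $2\pi/P$ subtended by one edge), the angle at $V$ is $\pi/Q$ (half the interior angle $2\pi/Q$, which is forced by $Q$ polygons meeting at each vertex), and the angle at $M$ is a right angle since $OM$ meets the edge perpendicularly. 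The half-edge $MV$ has length $r(P,Q)/2$, so it suffices to compute this leg.

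Next I would apply the standard relation for a hyperbolic right triangle expressing a leg through the two acute angles, namely $\cosh(\text{leg}) = \cos(\text{opposite angle})/\sin(\text{adjacent angle})$. With the leg $MV$ opposite the angle $\pi/P$ at $O$ and adjacent to the angle $\pi/Q$ at $V$, this yields the exact identity
\[
\cosh\!\left(\frac{r(P,Q)}{2}\right) = \frac{\cos(\pi/P)}{\sin(\pi/Q)}.
\]
The hyperbolicity hypothesis $\frac{1}{P}+\frac{1}{Q}<\frac{1}{2}$ is precisely the condition $\pi/P+\pi/Q+\pi/2<\pi$ guaranteeing that such a triangle exists, so the formula is valid throughout the admissible range.

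Finally I would feed this into the expansion $\acosh(y) = \log(2y) + O(1/y^2)$ as $y \to +\infty$. Writing $y = \cos(\pi/P)/\sin(\pi/Q)$ and using $\sin(\pi/Q) = \pi/Q + O(1/Q^3)$ gives
\[
\frac{r(P,Q)}{2} = \log Q + \log\!\big(2\cos(\pi/P)\big) - \log\pi + O(1/Q^2),
\]
so that $r(P,Q) = 2\log Q + O(1)$. The only step requiring care — and the point where the claimed uniformity in $P$ must be justified — is controlling the $P$-dependent term. For every admissible $P \ge 3$ one has $\cos(\pi/P) \in [\tfrac12,1)$, hence $2\cos(\pi/P) \in [1,2)$ and $\log(2\cos(\pi/P)) \in [0,\log 2)$ is bounded independently of $P$; this same lower bound $\cos(\pi/P) \ge \tfrac12$ forces $y \to +\infty$ uniformly in $P$ as $Q \to \infty$ and makes the error $O(1/y^2) = O(1/Q^2)$ uniform. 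Collecting the bounded contributions of $\log(2\cos(\pi/P))$, $-\log\pi$, and the $O(1/Q^2)$ term yields the stated estimate, uniform in $P$.
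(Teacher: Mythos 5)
Your proposal is correct and follows essentially the same route as the paper: the same right-triangle decomposition, the same identity $\cosh(r(P,Q)/2)=\cos(\pi/P)/\sin(\pi/Q)$ from the hyperbolic law of cosines, and the same asymptotic conclusion. The only (harmless) difference is at the very end, where the paper deduces uniformity by noting that $r(\infty,Q)-r(3,Q)$ is uniformly bounded, whereas you expand $\acosh$ explicitly and bound $\log(2\cos(\pi/P))$ directly — your version is if anything more explicit about the uniformity in $P$.
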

\begin{proof}
Consider a triangle joining a vertex, the center, and the midpoint of a side, of a regular \(P\)-gon with interior angle \(2\pi/Q\).  The interior angles of this triangle are \(\pi/P,\pi/Q\) and \(\pi/2\), and the side opposite to the angle \(\pi/P\) has length \(r(P,Q)/2\).  

By the hyperbolic law of cosines one obtains
\[r(P,Q) = 2\acosh\left(\frac{\cos(\pi/P)}{\sin(\pi/Q)}\right).\]

We set \(r(\infty,Q) = \lim\limits_{P \to +\infty}r(P,Q)\).  Since \(r(\infty,Q)-r(3,Q)\) is uniformly bounded one obtains
\[r(P,Q) = 2\log(Q) + O(1)\]
uniformly in \(P\) when \(Q \to +\infty\).
\end{proof}

\begin{lemma}\label{hyperboliclemma2}
In the context above one has
\[\max\limits_{\xi} \frac{1}{Q}\sum\limits_{i = 1}^Q f_\xi(x_i) = O(\log(\log(Q)))\]
when \(Q \to +\infty\).
\end{lemma}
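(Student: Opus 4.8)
The plan is to turn the quantity $\frac1Q\sum_i f_\xi(x_i)$ into an explicit trigonometric sum and estimate it uniformly over all boundary horofunctions. First I would identify the boundary horofunctions: since $G$ is a cocompact Fuchsian group, the orbit $G\cdot o$ is a cobounded net in the hyperbolic plane $\H$, so the horofunction boundary of $(G,\dist)$ coincides with the visual boundary $\partial\H=S^1$, and every boundary horofunction is the restriction of a Busemann function $\xi_\omega$ for a unique $\omega\in S^1$. Thus the maximum over $\xi$ becomes a maximum over $\omega\in S^1$, and by the explicit computation below it depends only on the angular position of $\omega$.

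Next I would write the summand explicitly. By the rotational symmetry of the tiling about $o$, the $Q$ neighbors $x_1,\dots,x_Q$ lie at common distance $r=r(P,Q)$ and at equal angular spacing $2\pi/Q$. Let $\theta_i$ be the angle at $o$ between the segment $[o,x_i]$ and the ray to $\omega$. Applying the hyperbolic law of cosines to the triangle $o,\gamma(t),x_i$, with $\gamma$ the unit-speed ray to $\omega$, and letting $t\to\infty$ gives
\[
\xi_\omega(x_i)=-\log\bigl(\cosh r-\sinh r\cos\theta_i\bigr),\qquad f_\xi(x_i)=r-\log\bigl(\cosh r-\sinh r\cos\theta_i\bigr).
\]
Writing $\cosh r-\sinh r\cos\theta_i=\tfrac12 e^{r}(1-\cos\theta_i)+\tfrac12 e^{-r}(1+\cos\theta_i)$ and discarding the second, nonnegative term yields the two clean bounds
\[
f_\xi(x_i)\le -2\log\bigl|\sin(\theta_i/2)\bigr|\qquad\text{and}\qquad f_\xi(x_i)\le 2r,
\]
the latter because $\cosh r-\sinh r\cos\theta_i\ge 2e^{-r}$.

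Then I would estimate the average. Let $\delta_i\in[0,\pi]$ denote the circular distance of $\theta_i$ to $0$; since $\sin(\delta/2)\ge \delta/\pi$ on $[0,\pi]$ we get $f_\xi(x_i)\le 2\log(\pi/\delta_i)$. The $\delta_i$ are $Q$ points with circular spacing $2\pi/Q$, so ordering them as $\delta_{(1)}\le\delta_{(2)}\le\cdots$ one has $\delta_{(m)}\ge (m-1)\pi/Q$. The single closest neighbor $\delta_{(1)}$, which may be $0$ when $\omega$ points along an edge, I bound by the cap $2r=4\log Q+O(1)$; it contributes $O(\log Q/Q)=o(1)$ after dividing by $Q$. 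For $m\ge2$,
\[
\frac1Q\sum_{m=2}^{Q} f_\xi(x_{(m)})\le \frac{2}{Q}\sum_{m=2}^{Q}\log\frac{Q}{m-1}=\frac{2}{Q}\Bigl((Q-1)\log Q-\log\bigl((Q-1)!\bigr)\Bigr),
\]
which is $2+o(1)$ by Stirling's formula. Hence $\frac1Q\sum_i f_\xi(x_i)=O(1)$, uniformly in $\omega$ and in $P$ (the only $P$-dependence enters through $r=2\log Q+O(1)$ from Lemma \ref{hyperboliclemma1}), which is in fact stronger than the claimed $O(\log\log Q)$.

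The main obstacle is not the size of a typical term — the average of $-2\log|\sin(\theta/2)|$ over the circle is the constant $2\log2$ — but the logarithmic singularity at $\theta=0$, i.e.\ the neighbors whose direction nearly coincides with $\omega$. The two points that make this work are that at most one neighbor falls inside the innermost angular cell, so the saturated value $2r$ is incurred at most once and is absorbed by the $1/Q$ factor, and that the Stirling estimate shows the remaining near-singular terms, though individually as large as $\sim 2\log Q$, sum to only $O(1)$ after averaging. The one place requiring genuine care is checking that these bounds hold uniformly over every $\omega\in S^1$, which the explicit formula for $f_\xi(x_i)$ in terms of $\theta_i$ makes transparent.
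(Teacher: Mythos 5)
Your proof is correct, but it takes a genuinely different route from the paper's. The paper proves Lemma \ref{hyperboliclemma2} by a single-threshold argument: by the appendix Lemma \ref{conesetlemma}, the set $\{f_\xi>2\log\log Q\}$ lies in a cone of angle $O(1/\log Q)$, so only $O(Q/\log Q)$ of the $Q$ equally spaced neighbors can land there; each such term is capped by $2r=4\log Q+O(1)$, and the remaining terms by the threshold $2\log\log Q$. You instead compute the Busemann function exactly at the neighbors via the hyperbolic law of cosines, extract the $r$-independent envelope $f_\xi\le -2\log|\sin(\theta/2)|$, and then sum over the equally spaced angles using $\delta_{(m)}\ge (m-1)\pi/Q$ and Stirling. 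This is in effect a dyadic decomposition over all level sets rather than one, and it buys a strictly stronger conclusion: $\frac1Q\sum_i f_\xi(x_i)\le 2+o(1)$, i.e.\ $O(1)$ rather than $O(\log\log Q)$ (with a single threshold $s$ one gets $O(e^{-s/2}\log Q)+s$, which cannot beat $\log\log Q$; your summation removes that loss, and would sharpen Theorem \ref{tilingtheorem} to $\ell_p\ge 2\log Q-O(1)/p$). Two small points to tidy: the inequality $\cosh r-\sinh r\cos\theta\ge 2e^{-r}$ should read $\ge e^{-r}$ (the bound $f_\xi\le 2r$ you draw from it is still correct, and in any case follows from $1$-Lipschitzness of $\xi$ with $\xi(o)=0$); and the identification of the boundary horofunctions of the orbit metric space with restrictions of Busemann functions of $\H^2$, which both you and the paper rely on, deserves a sentence --- it holds because any sequence $x_n\to\infty$ in the orbit has $\xi_{x_n}$ subconverging in $\widehat{\H^2}$ (properness) to a horofunction of $\H^2$, and all boundary horofunctions of $\H^2$ are Busemann functions.
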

\begin{proof}
By Lemma \ref{conesetlemma} the set of points where \(f_\xi\) is larger than \(2\log(\log(Q))\) is contained in a cone with angle \(C\log(Q)^{-1}\) for some constant \(C\) independent of \(Q\).  Hence, there are at most \(O(Q/\log(Q))\) neighbors of \(o\) in this set.  Bounding the value of \(f_\xi\) at those points by \(2r_{P,Q} = 4\log(Q)+O(1)\) one obtains
\begin{align*}
\max\limits_{\xi} \frac{1}{Q}\sum\limits_{i = 1}^Q f_\xi(x_i) &\le \frac{1}{Q}O(Q/\log(Q))(4\log(Q)+O(1)) + 2\log(\log(Q))\\ &= O(\log(\log(Q)))
\end{align*}
which establishes the lemma.
\end{proof}

\section{Application to dimension drop of harmonic measures}

In this section we will show that the simple random walk on the tilings of the hyperbolic plane considered in Theorem \ref{tilingtheorem} has a harmonic measure which is singular with respect to the rotationally invariant measure on the boundary.

This was first proved in \cite{furstenbergformulaoldversion} as far as the author's are aware, and was re-obtained independently in recent work of Petr Kosenko \cite{kosenko}.   His result covers all but finitely many cases for the number of sides and polygons per vertex \((P,Q)\), while we estimate the dimension of the boundary measure only for large \(Q\).

Recall that \(o\) is a fixed point in the Hyperbolic plane which is a vertex of a tiling by regular \(P\)-gons with meeting \(Q\) meeting at each vertex.    Let \(x_0=o,x_1,\ldots\) be the simple random walk starting at \(o\) on this tiling.   One can write this as \(x_n = g_1\cdots g_n(o)\) for an i.i.d. sequence \(g_1,g_2,\ldots\) in the set \(F\) of central symmetries with respect to the midpoints of sides incident to \(o\).

In this context it is well known (see \cite{kosenko} and the references therein) that a limit point \(x_\infty = \lim\limits_{n \to +\infty} x_n\) on the boundary of the hyperbolic plane exists.  Let \(\nu_{P,Q}\) be the distribution of \(x_{\infty}\) and let \(\dim(\nu_{P,Q})\) be the Hausdorff dimension of \(\nu_{P,Q}\), that is, the infimum of the dimensions of full measure Borel sets.

Recall that the linear drift or speed is given by
\[\ell(P,Q) = \lim\limits_{n \to +\infty}\frac{1}{n}\dist(x_0,x_n)\]
while the asymptotic entropy (Avez entropy) is given by
\[h(P,Q) = \lim\limits_{n \to +\infty}-\frac{1}{n}\log(p_n(x_0,x_n))\]
where \(p_n(x,y)\) is probability of arriving in n steps at \(y\) starting from \(x\).

From \cite{tanaka} these quantities are related by
\[\dim(\nu_{P,Q}) = \frac{h(P,Q)}{\ell(P,Q)}.\]

\begin{theorem}[Dimension drop for some cocompact Fuchsian groups]\label{dimdrop}
In the context above one has \(\limsup\limits_{Q \to +\infty}\dim(\nu_{P,Q}) \le \frac{1}{2}\) uniformly in \(Q\).
\end{theorem}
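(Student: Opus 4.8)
The plan is to read off the dimension from the formula \(\dim(\nu_{P,Q}) = h(P,Q)/\ell(P,Q)\) recalled just above the statement, and then to push the ratio below \(1/2\) by bounding the speed in the denominator from below and the entropy in the numerator from above. Both bounds must be uniform in \(P\), which is precisely what makes the conclusion uniform.

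First I would invoke the speed estimate of Theorem \ref{tilingtheorem} specialized to \(p = 1\). Since \(G_1 = G\) is the full tiling, \(\ell_1(P,Q)\) is exactly the speed \(\ell(P,Q)\) of the simple random walk on \(T_{P,Q}\), and the estimate reads
\[\ell(P,Q) \ge 2\log(Q) - O(\log(\log(Q)))\]
as \(Q \to +\infty\), with the right-hand side independent of \(P\). In particular \(\ell(P,Q) = 2\log(Q)\,(1 + o(1))\) uniformly in \(P\).

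Next I would bound the Avez entropy from above by the Shannon entropy of a single step. Writing \(x_n = g_1\cdots g_n(o)\) with the \(g_i\) i.i.d.\ uniform on the generating set \(F\), the step distribution \(\mu\) is uniform on the \(Q\) central symmetries, so its Shannon entropy is \(H(\mu) = \log(Q)\). Since \(h(P,Q) = \lim_n \frac{1}{n} H(\mu^{*n})\) and Shannon entropy is subadditive along convolutions, the fundamental inequality gives \(h(P,Q) \le H(\mu) = \log(Q)\) for every \(P\). Combining the two bounds,
\[\dim(\nu_{P,Q}) = \frac{h(P,Q)}{\ell(P,Q)} \le \frac{\log(Q)}{2\log(Q) - O(\log(\log(Q)))} = \frac{1}{2 - o(1)},\]
which tends to \(\frac{1}{2}\) as \(Q \to +\infty\), with all error terms independent of \(P\); taking the limit superior yields the claim uniformly in \(P\).

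The argument is essentially mechanical once the speed estimate is available, so the only points I would be careful about are the two uniformity claims and the applicability of the dimension formula. The entropy bound \(h(P,Q) \le \log(Q)\) is uniform in \(P\) because the vertex degree of \(T_{P,Q}\) is \(Q\) regardless of \(P\), and the speed bound is explicitly uniform in \(P\) in Theorem \ref{tilingtheorem}; hence the two ingredients combine uniformly. I would also confirm that Tanaka's identity \(\dim(\nu_{P,Q}) = h(P,Q)/\ell(P,Q)\) genuinely applies here, i.e.\ that the exit measure on the boundary circle is well defined and that \(h\) and \(\ell\) are the relevant quantities, but this is recorded in the text and is standard for random walks on cocompact Fuchsian groups.
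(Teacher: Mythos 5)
Your proposal is correct and follows essentially the same route as the paper: the paper also bounds $h(P,Q)\le -\E{\log(p_1(x_0,x_1))}=\log(Q)$ by subadditivity and combines this with the speed lower bound $\ell(P,Q)\ge 2\log(Q)-O(\log(\log(Q)))$ from Theorem \ref{tilingtheorem} (at $p=1$) via Tanaka's identity $\dim(\nu_{P,Q})=h(P,Q)/\ell(P,Q)$. Your extra remarks on uniformity in $P$ and on the applicability of the dimension formula are sensible but do not change the argument.
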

\begin{proof}
By subadditivity one has \(h(P,Q) \le -\E{\log(p_1(x_0,x_1))} = \log(Q)\).

Combining this with the estimate for \(\ell(P,Q)\) given by Theorem \ref{tilingtheorem} one obtains
 \[\dim(\nu_{P,Q}) = \frac{h(P,Q)}{\ell(P,Q)} \le \frac{\log(Q)}{2\log(Q) + O(\log(\log(Q)))},\]
 from which the result follows immediately.
\end{proof}

\section{Application to cocycles of isometries}

We will now show that Theorem \ref{furstenbergtheorem} allows one to reobtain the law of large numbers from \cite{karlsson-ledrappier}.    This result has a wealth of applications such as Oseledet's theorem, and geodesic tracking theorems.   We will also illustrate that the well known formula from \cite{furstenberg1963b} for the first Lyapunov exponent of a product of \(2\times 2\) i.i.d.\ matrices also follows from Theorem \ref{furstenbergtheorem}, whereas it does not follow from \cite[Theorem 1.1]{karlsson-ledrappier}.

\subsection{Law of large numbers}

Let \((g_n)_{n \in \Z}\) be a stationary and ergodic random sequence of isometries of a complete separable metric space \((X,\dist)\).   Let \(x_0 = o\) be  a fixed point in \(X\) and for \(n = 1,2,\ldots\) let 
\(x_n = g_n\cdots g_1(o)\) and \(x_{-n} = g_{-n+1} \cdots g_0^{-1}(o)\).

The sequence \((x_n)_{n \in \Z}\) is distance stationary and its speed
\[\ell = \lim\limits_{n \to +\infty}\frac{1}{n}\dist(x_0,x_n)\]
is deterministic because it is a shift invariant function of the sequence \((g_n)_{n \in \Z}\).

\begin{theorem}[Karlsson-Ledrappier law of large numbers]\label{karlssonledrappiertheorem}
In the context above almost surely there exists a horofunction \(\xi\) such that
\[\ell = \lim\limits_{n \to +\infty}\frac{1}{n}\xi(x_n)\]
\end{theorem}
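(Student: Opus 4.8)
The plan is to derive the Karlsson--Ledrappier law of large numbers (Theorem \ref{karlssonledrappiertheorem}) as a direct consequence of Theorem \ref{furstenbergtheorem} applied to the distance stationary sequence $(x_n)_{n\in\Z}$ built from the cocycle of isometries. The first step is to confirm that the hypotheses of Theorem \ref{furstenbergtheorem} are met: the sequence $(x_n)_{n \in \Z}$ with $x_n = g_n\cdots g_1(o)$ and $x_{-n} = g_{-n+1}\cdots g_0^{-1}(o)$ is distance stationary because the $(g_n)$ are stationary and each $g_n$ is an isometry, so $\dist(x_m,x_n)$ depends only on the increments $g_{m+1},\dots,g_n$ and is invariant under the shift. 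The integrability $\E{\dist(x_0,x_1)} = \E{\dist(o,g_1(o))} < +\infty$ is the one genuine extra assumption one must require (or which is implicit in the statement), and I would state it explicitly at the outset.

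\emph{Second}, I would invoke Theorem \ref{furstenbergtheorem} to produce a random horofunction $\xi$ with $\frac{\xi(x_n)}{n} \to \ell$ almost surely. The only gap between this and the desired statement is that Theorem \ref{furstenbergtheorem} gives convergence of $\tfrac1n\xi(x_n)$ to the \emph{random} variable $\ell$, whereas here the speed is deterministic. But as the excerpt already observes, $\ell$ is a shift-invariant function of the ergodic sequence $(g_n)_{n\in\Z}$, hence almost surely constant; so the limit is indeed the deterministic speed and the conclusion $\ell = \lim_n \tfrac1n \xi(x_n)$ follows verbatim. Because the speed is deterministic, the simplified formula $\ell = -\E{\xi(x_1)}$ from the remark following Theorem \ref{thm:A} is also available, though it is not needed for this particular statement.

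\emph{The subtle point}, and the step I expect to require the most care, is the measurability and construction of $\xi$ as a genuine function of the cocycle, matching the formulation in \cite{karlsson-ledrappier}. Theorem \ref{furstenbergtheorem} constructs $\xi$ as an almost sure limit of horofunctions $\xi_{x_{-n}}$ associated to the \emph{negative} part of the trajectory, and one should check that this construction respects the cocycle structure and that $\xi$ depends measurably on $(g_n)_{n \le 0}$. Concretely, one identifies $\xi$ with the limit in $\widehat{X}$ of the normalized horofunctions based at $x_{-n} = g_{0}^{-1}g_{-1}^{-1}\cdots$, which is exactly the backward orbit of $o$ under the inverse cocycle; this is the object whose existence Karlsson and Ledrappier establish directly, and here it is handed to us by Theorem \ref{furstenbergtheorem}. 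I would therefore conclude by remarking that no properness assumption on $X$ is used, so this recovers --- and in fact slightly strengthens, by dropping properness --- the original result.
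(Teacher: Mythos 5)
There is a genuine gap here: you apply Theorem \ref{furstenbergtheorem} directly to the forward sequence $(x_n)_{n\in\Z}$ and read off the conclusion $\tfrac{1}{n}\xi(x_n)\to\ell$. But the precise conclusion of that theorem (item 3) is that the increment sequence $(\xi(x_n)-\xi(x_{n+1}))_{n\in\Z}$ is stationary with Birkhoff limit $\ell$; telescoping the forward averages gives $\tfrac{1}{n}(\xi(x_0)-\xi(x_n))\to\ell$, i.e.\ $\tfrac{1}{n}\xi(x_n)\to-\ell$. This sign is not an artifact: the horofunction of Theorem \ref{furstenbergtheorem} is built from the empirical measures $\mu_n=\tfrac{1}{n}\sum_{i}\delta_{\xi_{x_{-i}}}$ of the \emph{backward} orbit --- as you yourself observe in your final paragraph --- so when $\ell>0$ it is attached to the backward escape direction and \emph{decreases} at rate $\ell$ along the forward orbit (think of $M=\R$, $x_n=n$: the backward limit gives $\xi(y)=-y$). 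A horofunction with $\tfrac{1}{n}\xi(x_n)\to+\ell$ must be attached to the forward direction, and $-\xi$ is not a horofunction, so one cannot simply flip the sign. The informal phrasing of Theorem \ref{thm:A} in the introduction is loose on exactly this point; the statement you may actually invoke is item 3 of Theorem \ref{furstenbergtheorem}.

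The paper's proof supplies precisely the step you are missing: it applies Theorem \ref{furstenbergtheorem} to the \emph{reversed} sequence $y_n=x_{-n}$, after checking via distance stationarity and ergodicity that its speed is again $\ell$. The resulting horofunction is then built from $\xi_{y_{-i}}=\xi_{x_i}$, i.e.\ from the forward orbit, and the \emph{backward} Birkhoff averages of the stationary increments telescope to $\tfrac{1}{n}\sum_{k=1}^{n}\bigl(\xi(y_{-k})-\xi(y_{-k+1})\bigr)=\tfrac{1}{n}\bigl(\xi(y_{-n})-\xi(y_0)\bigr)=\tfrac{1}{n}\xi(x_n)$, which converges to the same limit $\ell$ as the forward averages by Birkhoff's theorem for the inverse shift. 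Your verification of the hypotheses (distance stationarity of the orbit, integrability of the first step, determinism of $\ell$ by shift invariance and ergodicity) is correct and matches the paper, but without the reversal the argument establishes a statement about $\xi(x_{-n})$ (equivalently, $\tfrac{1}{n}\xi(x_n)\to-\ell$) rather than the claimed one.
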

\begin{proof}
Consider the reverse sequence \(y_n = x_{-n}\).  Notice that the speed of this sequence is also \(\ell\) since by distance stationarity and ergodicity
\[\ell = \lim\limits_{n \to +\infty}\E{\frac{1}{n}\dist(x_0,x_n)} = \lim\limits_{n \to +\infty}\E{\frac{1}{n}\dist(x_{-n},x_0)}.\]

By Theorem \ref{furstenbergtheorem} there exists a random horofunction \(\xi\) such that
\[\ldots, \xi(y_1) - \xi(y_0), \ldots, \xi(y_n)-\xi(y_{n+1}), \ldots\]
is stationary and has Birkhoff limit \(\ell\).

In particular one has
\begin{align*}
\ell &= \lim\limits_{n \to +\infty}\frac{1}{n}\sum\limits_{k = 1}{n}\xi(y_{-k}) - \xi(y_{-k+1})\\ 
& = \lim\limits_{n \to +\infty}\frac{1}{n}(\xi(y_{-n})-\xi(y_0)) = \lim\limits_{n \to +\infty}\frac{1}{n}\xi(x_{n}).
\end{align*}
\end{proof}

\begin{remark}
The sign convention for horofunctions we use here is the opposite as in \cite[Theorem 1.1]{karlsson-ledrappier}.
\end{remark}

\begin{remark}
While in \cite[Theorem 1.1]{karlsson-ledrappier} the horofunction \(\xi\) is a measurable function of the sequence of isometries \((g_n)_{n \in \Z}\), in Theorem \ref{karlssonledrappiertheorem} above it depends also on an auxiliary random variable \(u\).
\end{remark}

A natural question is why one has to reverse the sequence to obtain Theorem \ref{karlssonledrappiertheorem} from Theorem \ref{furstenbergtheorem}.  A preliminary answer is that, in many applications, the horofunction obtained from the past tail of a distance stationary sequence has some independence from the first step of the sequence.  This makes the formula \(\ell = -\E{\xi(x_1)}\) more powerful and easier to estimate.

In particular, when the sequence of isometries \((g_n)_{n \in \Z}\) is i.i.d.\ the horofunction given by Theorem \ref{furstenbergtheorem} is independent from \(x_1\).  This is illustrated in the following subsection.

\subsection{Lyapunov exponents of $2\times 2$ i.i.d.\ matrix products}

Suppose that $(A_n)_{n \in \Z}$ is an i.i.d.\ sequence of matrices in $\text{SL}(2,\R)$ with the additional property that $\E{\log(|A_1|)} < +\infty$ where $|A|$ denotes the operator norm of the matrix $A$.

The largest Lyapunov exponent of the sequence $(A_n)$ is defined by
\[\chi = \lim\limits_{n \to +\infty}\frac{1}{n}\log\left(|A_n\cdots A_1|\right),\]
and is almost surely constant since it is a tail function of the sequence.

Notice that if one writes $A_n \cdots A_1 = O_n P_n$ where $O_n$ is orthogonal and $P_n$ is symmetric with positive eigenvalues, one obtains
\[\chi = \lim\limits_{n \to +\infty}\frac{1}{n}\log\left(|P_n|\right).\]

This implies that $\xi$ depends only on the sequence of projections of $A_n\cdots A_1$ to the left quotient $M = \text{SO}(2)\backslash\text{SL}(2,\R)$.  Let $[A]$ denote the equivalence class of a matrix $A \in \text{SL}(2,\R)$ in the quotient above.

The quotient space admits a (unique up to homotethy) Riemannian metric for which the transformations $[A] \mapsto [AB]$ are isometries for all $B \in \text{SL}(2,\R)$.  One may choose such a metric so that the distance $d([\text{Id}],[A]) = \sqrt{\log(\sigma_1)^2 + \log(\sigma_2)^2}$ where $\sigma_1,\sigma_2$ are the singular values of $A$ and $\text{Id}$ denotes the identity matrix.  In particular, since $\sigma_1 = |A|$ and $A$ has determinant $1$, one obtains $d(\text{Id},[A]) = \sqrt{2}\log(|A|)$.

With the Riemannian metric under consideration the sequence 
\[\ldots, x_{-2} = [A_{-1}^{-1}A_0^{-1}], x_{-1} = [A_0^{-1}], x_0 = [\text{Id}], x_1 =[A_1], x_2 = [A_2A_1],\ldots\]
is distance stationary and satisfies $\E{d(x_0,x_1)} < +\infty$.  Furthermore, its rate of escape is $\ell = \sqrt{2}\chi$.

The boundary horofunctions on $M$ are of the form $\xi([A]) = -\sqrt{2}\log(|Av|)$ for some $|v| = 1$ (see for example \cite{MR1794514}).

If $A_1$ is not contained almost surely in a compact subgroup of $\text{SL}(2,\R)$ then one may use \cite[Theorem 8]{MR0423532} to show that $\P(x_n \in K) \to 0$ when $n \to +\infty$ for all compact sets $K \subset M$.

Hence, Theorem \ref{furstenbergtheorem} and Proposition \ref{corollaryboundary} imply the existence of a random unit vector $v \in \R^2$ which is independent from $A_1$ and such that
\[\chi = \E{\log\left(|A_1v|\right)}.\]

In particular, letting $\mu$ be the distribution of $A_1$, there is a probability $\nu$ on the unit circle $S^1 \subset \R^2$ such that 
\[\chi = \int\limits_{\text{SL}(2,\R)} \int\limits_{S^1} \log\left(|Av|\right) \mathrm{d}\nu(v) \mathrm{d}\mu(A).\]

This is Furstenberg's formula for the largest Lyapunov exponent (see \cite[Theorem 3.6]{bougerol-lacroix}).

The reasoning above may be carried out in $\text{SL}(n,\R)$ for larger $n$.  What results is a formula for the sum of squares of the Lyapunov exponents of the random i.i.d.\ product of matrices.  As above, the distribution of the random boundary horofunction is unknown (in larger dimension horofunctions are determined by a choice of a flag and a sequence of weights adding up to zero).

\section{Proof of the formula for speed}\label{sec:furstenberg}

\subsection{Preliminaries}

\subsubsection{Distance stationary sequences}

In what follows $(M,d)$ denotes a complete separable metric space and $o \in M$ a base point which is fixed from now on. Recall that $M$ is \emph{proper} if closed balls are compact, and that any proper metric space is separable. If $M$ is a geodesic metric space, then $M$ is proper if and only if it is complete and locally compact.

A random sequence $(x_n)_{n \in \Z}$ of points in $M$ is said to be \emph{distance stationary} if the distribution of $(d(x_m,x_n))_{m,n \in \Z}$ coincides with that of $(d(x_{m+1},x_{n+1}))_{m,n \in \Z}$.

\subsubsection{Horofunctions}

To each point $x \in M$ we associate a horofunction $\xi_x: M \to \R$ defined by
\[\xi_x(y) = d(x,o) - d(x,y).\]
Notice that $\xi_x$ is normalized so that $\xi_x(o)=0$. 

The mapping $x \mapsto \xi_x$ has image in the space of $1$-Lipschitz functions from $M \to \R$.  If $x, x' \in M$ and $d(x, o) \geq d(x', o)$, then
$$\xi_x(x)-\xi_{x'}(x)= d(x, o)-d(x',o)+d(x',x)\geq d(x',x),$$
and hence $x\mapsto \xi_x$ is injective. In what follows we identify $M$ with the image $\left\{ \xi_x : x \in M \right\}$ as a subset of the space of 1-Lipschitz functions.

We equip this space with the topology of uniform convergence on compact sets. Since $\|\xi_x-\xi_{x'}\|_\infty\leq 2d(x,x')$, the mapping $x\mapsto \xi_x$ is continuous. The \emph{horofunction compactification} of $M$ is the space $\widehat{M}$ obtained as the closure of $M$ via this identification.

Since $M$ is separable, from the $1$--Lipschitzness of $\left\{ \xi_x  : x \in M \right\}$, uniform convergence on compact sets of $M$ is equivalent to the topology of pointwise convergence on any countable dense set $\left\{ q_1,q_2,\dots \right\} \subset M.$  As $|\xi_x(y)| \leq d(o,y)$ for any $y,$ it follows from diagonalization that $\left\{ \xi_x  : x \in M \right\}$ is precompact.  Moreover, if we define a metric
$$\rho(\xi_x, \xi_y) = \sum_{j=1}^\infty 2^{-j}\min\{ |\xi_x(q_j) - \xi_y(q_j)|, 1\},$$
the completion $\widehat M$ of $M$ with respect to $\rho$ will be a compact, complete separable metric space. In particular, $\widehat{M}$ is a Polish space.

If $M$ is proper, then uniform convergence on compact sets is equivalent to uniform convergence on bounded sets, so this construction recovers the usual horofunction compactification of $M$ for proper separable metric spaces. See \cite[II.1]{Ballmann} for details.
 
\begin{proposition}\label{prop:embedding}
Let $M$ be a geodesic and proper metric space. Then for any bounded open set $U$ we have that the closure $\overline{M\setminus U}$ in $\widehat{M}$ is disjoint from $U$.

In particular, the mapping $x\mapsto \xi_x$ is an embedding.
\end{proposition}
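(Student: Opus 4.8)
The plan is to prove the disjointness claim directly and then deduce the embedding property as a short consequence.

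First I would record why disjointness of closures yields that $x \mapsto \xi_x$ is an embedding. Since this map is already continuous and injective, it suffices to check that its inverse is continuous on the image, i.e.\ that $\xi_{x_n} \to \xi_x$ in $\widehat M$ forces $x_n \to x$ in $M$. If this failed, some subsequence would satisfy $d(x_n,x) \ge \epsilon$ for a fixed $\epsilon > 0$; taking the bounded open set $U = B(x,\epsilon)$, every such $x_n$ lies in $M \setminus U$, so $\xi_x$ would belong to $\overline{M \setminus U}$ while $x \in U$, contradicting disjointness. Thus the ``in particular'' reduces to the main statement.

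For the main statement, suppose toward a contradiction that $\xi_x \in \overline{M\setminus U}$ for some $x \in U$. Then there is a sequence $y_n \in M \setminus U$ with $\xi_{y_n} \to \xi_x$ uniformly on compact sets. Since $U$ is open, choose $r>0$ with $\overline{B(x,r)} \subset U$; this closed ball is compact because $M$ is proper, and $d(x,y_n) \ge r$ because $y_n \notin U$. Using that $M$ is geodesic, I would join $x$ to $y_n$ by a geodesic and let $w_n$ be the point on it at distance $r$ from $x$, so that $w_n \in \overline{B(x,r)}$ and $d(y_n, w_n) = d(x,y_n) - r$. A direct computation then gives
\[\xi_{y_n}(w_n) = d(y_n,o) - d(y_n,w_n) = d(y_n,o) - d(x,y_n) + r = \xi_{y_n}(x) + r.\]

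Finally I would pass to the limit. By compactness of $\overline{B(x,r)}$ we may assume $w_n \to w_\infty$ in this ball; since $\xi_{y_n} \to \xi_x$ uniformly on this compact set and $\xi_x$ is continuous, $\xi_{y_n}(w_n) \to \xi_x(w_\infty)$, while $\xi_{y_n}(x) \to \xi_x(x) = d(x,o)$. The displayed identity then yields $\xi_x(w_\infty) = d(x,o) + r$, whereas by definition $\xi_x(w_\infty) = d(x,o) - d(x,w_\infty) = d(x,o) - r$, using $d(x,w_\infty) = \lim_n d(x,w_n) = r$. This forces $r = 0$, a contradiction. The main obstacle is exactly this limiting step, where both hypotheses are genuinely used: properness keeps the test points $w_n$ inside a fixed compact set, and the geodesic assumption produces, for each $y_n$, a point at controlled distance $r$ from $x$ at which the horofunction value is pushed up by $r$. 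Without the compactness the uniform-on-compacts convergence of $\xi_{y_n}$ would not transfer to the moving evaluation points $\xi_{y_n}(w_n)$, and the argument would break down.
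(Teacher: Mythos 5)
Your proof is correct and uses essentially the same mechanism as the paper's: a test point on the geodesic from $x$ to $y_n$ at which $\xi_{y_n}$ and $\xi_x$ are forced to differ by a definite amount, contradicting uniform convergence on a compact set supplied by properness. The only real difference is cosmetic: by placing the test point at distance $r$ from $x$ (with $B(x,r)\subset U$) rather than at distance $d(x,o)+1$ from $x$, you handle bounded and unbounded sequences $(y_n)$ uniformly and so avoid the paper's separate case for bounded sequences; you also spell out the deduction of the embedding statement, which the paper leaves implicit.
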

\begin{proof}
Suppose $(x_n)$ is a sequence in $M\setminus U$ and that $\xi_x=\lim_n \xi_{x_n}$ for some $x\in U$. If $(x_n)$ is bounded, by properness we can extract a convergent subsequence $(y_m)$ of $(x_n)$ converging to a point $y\in M\setminus U$. Since $x\mapsto \xi_x$ is continuous, $\xi_{y_n}\to \xi_{y}$, but by injectivity we must have that $y=x\in U$ which is a contradiction.

Let $r>0$ be such that $U\subset B(o,r)$. We assume without loss of generality that $d(x_n,o)\geq 2r+1$. We have in particular that $d(x_n,o)\geq 2d(x,o)+1$. Since $d(x,x_n)\geq d(x,o)+1$, we can pick a point $y_n$ in a geodesic segment $[x,x_n]$ so that $d(x,y_n)=d(x,o)+1$.

For this point $y_n$ we have
\begin{align*}
\xi_{x_n}(y_n)-\xi_x(y_n)&=d(x_n,o)-d(x_n,y_n)-d(x,o)+d(x,y_n)\\
&= d(x_n,o)-d(x_n,y_n) +1\\
&\geq d(x_n,x)-d(x,o)-d(x_n,y_n)+1\\
&=d(x,y_n)-d(x,o)+1=2
\end{align*}
If we let $K=\{y_n:n\geq 1\}$, then $\sup_{z\in K}|\xi_x(z)-\xi_{x_n}(z)|\geq 2$. Since $K$ is compact, we have a contradiction.
\end{proof}

%Compactness of $\widehat{M}$ follows from the Arsel\`a-Ascoli theorem and the fact that all functions $\xi_x$ are $1$-Lipschitz.  A horofunction on $M$ is an element of $\widehat{M}$.

Horofunctions in $\widehat{M}$ which are not of the form $\xi_x$ will be called \emph{boundary horofunctions} and the set of boundary horofunctions is the horofunction boundary of $M$, which might sometimes be written $\widehat{M} \setminus M$ abusing notation slightly. As a corollary of Proposition \ref{prop:embedding} notice that when $M$ is geodesic and proper, if $(x_n)$ is a sequence escaping any bounded set, then every limit point of $(\xi_{x_n})$ is a boundary horofunction.

\subsubsection{Speed or linear drift}

If $(x_n)_{n \in \Z}$ is a distance stationary sequence in $M$ satisfying $\E{d(x_0,x_1)} < +\infty$, then by Kingman's subadditive ergodic theorem the random limit
\[\ell = \lim\limits_{n \to +\infty}\frac{1}{n}d(x_0,x_n)\]
exists almost surely and in $L^1$.

We call this limit the \emph{speed} or \emph{linear drift} of $(x_n)_{n \in \Z}$.

\subsubsection{Stationary sequences and Birkhoff limits}

Recall that a random sequence $(s_n)_{n \in \Z}$ is stationary if its distribution coincides with that of $(s_{n+1})_{n \in \Z}$.  

If $(s_n)_{n \in \Z}$ is stationary and $\E{|s_1|} < +\infty$, then by Birkhoff's ergodic theorem the limit 
\[\lim\limits_{n \to +\infty}\frac{1}{n}\sum\limits_{i = 0}^{n-1}s_i\]
exists almost surely and in $L^1$.  We call this limit the Birkhoff limit of the sequence.

\subsubsection{Spaces of probability measures and representations}

Given a Polish space $X$ we use $\mathcal{P}(X)$ to denote the space of Borel probability measures on $X$ endowed with the topology of weak convergence (i.e. a sequence converges if the integral of each continuous bounded function from $X$ to $\R$ does).   This space is also Polish and is compact if $X$ is compact.

We will use the following result due to Blackwell and Dubins (see \cite{blackwell-dubins1983}):
\begin{theorem}[Continuous representation of probability measures]\label{blackwell-dubinstheorem}
For any Polish space $X$ there exists a function $F:\mathcal{P}(X)\times [0,1] \to X$ such that if $u$ is a uniform random variable on $[0,1]$ the following holds:
\begin{enumerate}
 \item For each $\mu \in \mathcal{P}(X)$ The random variable $F(\mu,u)$ has distribution $\mu$.
 \item If $\mu_n \to \mu$ then $F(\mu_n,u) \to F(\mu,u)$ almost surely.
\end{enumerate}
\end{theorem}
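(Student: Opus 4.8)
The plan is to build $F$ in three stages: first on the real line, then on the Hilbert cube $H=[0,1]^{\N}$, and finally on an arbitrary Polish space by transferring through a homeomorphic embedding. For $X=[0,1]$ (or $\R$) the \emph{quantile function} already does the job: set
\[F(\mu,u)=\inf\{t:\mu((-\infty,t])\ge u\}.\]
Here property (1) is the standard inverse--CDF sampling fact, and property (2) holds because weak convergence $\mu_n\to\mu$ forces the distribution functions to converge at every continuity point of the limit, hence the quantile functions converge at every continuity point of $F(\mu,\cdot)$; that set is co-countable, so convergence holds for a.e.\ $u$.

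To reduce a general Polish $X$ to the cube, fix a homeomorphic embedding $\iota$ of $X$ onto a (necessarily Borel, indeed $G_\delta$) subset of $H$. Pushing forward identifies $\mathcal{P}(X)$ with the measures in $\mathcal{P}(H)$ carried by $\iota(X)$, and $\iota_*$ is continuous for weak convergence, so it suffices to construct a representation $G$ on the compact space $H$ and then set $F(\mu,u)=\iota^{-1}(G(\iota_*\mu,u))$ when $G(\iota_*\mu,u)\in\iota(X)$ and equal to a fixed basepoint otherwise. The only subtlety is that a sequence converging into the merely $G_\delta$ set $\iota(X)$ need not lie in $\iota(X)$; but property (2) concerns a \emph{fixed} sequence $\mu_n\to\mu$, involving only countably many measures, so intersecting the full-measure events $\{G(\iota_*\mu_n,u)\in\iota(X)\}$ and $\{G(\iota_*\mu,u)\in\iota(X)\}$ still leaves a full-measure set of $u$, on which $\iota^{-1}$ is applied to a convergent sequence inside $\iota(X)$, giving (2).

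The heart of the matter is constructing $G$ on $H$, and the main obstacle is property (2). The naive device of sampling coordinate by coordinate through conditional quantile functions (a Rosenblatt-type transform) satisfies (1) but \emph{fails} (2): conditioning on an exact coordinate value is discontinuous in the joint law, and one can build $\mu_n\to\mu$ (e.g.\ a ``typewriter'' perturbation of the second coordinate) for which the conditional-quantile point never converges. To repair this I would avoid point conditionals and instead select the sample through a nested sequence of partitions whose cells are $\mu$-continuity sets. Concretely, split $u$ into countably many independent uniforms; use some of them to choose random offsets $(\theta_i)$ defining, in each coordinate, a refining family of shifted dyadic grids of mesh tending to $0$. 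Since each coordinate marginal of $\mu$ has at most countably many atoms, for a.e.\ offset every grid hyperplane is $\mu$-null, so all grid cells become $\mu$-continuity sets. One then selects a nested sequence of cells using one-dimensional quantile draws (from the remaining uniforms) applied to the successive cell probabilities, and defines $G(\mu,u)$ as the unique point in the intersection of the closures. Property (1) is immediate, since this is just inverse sampling from the cell probabilities of $\mu$.

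For property (2), given $\mu_n\to\mu$ one fixes an offset that is simultaneously good for the countably many measures involved, and then the Portmanteau theorem yields convergence of the probability of each continuity-set cell, so the quantile-driven cell selections for $\mu_n$ agree with those for $\mu$ up to any fixed refinement level once $n$ is large, off a null set of the selection uniforms; this forces $G(\mu_n,u)\to G(\mu,u)$. I expect the delicate points to be exactly the two isolated here: checking that random offsets make all cells continuity sets simultaneously for a fixed countable family of measures, and propagating the a.e.\ convergence of the finitely-many-cell quantile selections through the countably many refinement levels. The two reduction steps (the quantile base case and the homeomorphic transfer) are then routine.
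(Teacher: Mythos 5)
The paper does not prove this statement at all: it is imported verbatim from Blackwell and Dubins \cite{blackwell-dubins1983} and used as a black box, so there is no internal proof to compare against. Your proposal is, as far as I can check, a correct self-contained proof, and it is in the spirit of the original Skorokhod-type representation arguments. The three-stage reduction (quantile transform on $\R$, a representation on the Hilbert cube, transfer along a homeomorphic embedding onto a $G_\delta$ subset) is sound; in particular you correctly observe that only the forward continuity of $\iota_*$ is needed and that the exceptional null sets coming from the countably many measures $\mu,\mu_1,\mu_2,\dots$ can be discarded. You also correctly identify the real obstruction --- the coordinatewise conditional-quantile (Rosenblatt) construction satisfies (1) but not (2) --- and your fix, randomly offset refining grids whose cells are simultaneously continuity sets for any fixed countable family of measures, together with quantile-driven nested cell selection, does repair it: for a.e.\ offset all grid hyperplanes are null for every measure in the family, Portmanteau gives convergence of cell probabilities, and the level-by-level quantile selections stabilize off a null set of the auxiliary uniforms, forcing the sample points into a common cell of arbitrarily small diameter. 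The one step I would not call ``immediate'' is property (1) on the cube: the sample point is defined only as the intersection of the \emph{closures} of the selected cells, so it may sit on a cell boundary rather than in the selected (half-open) cell itself; the clean way to conclude that its law is $\mu$ is to test against uniformly continuous functions and use that the cell diameters tend to $0$, which gives $\bigl|\E{f(G(\mu,u))}-\int f\,d\mu\bigr|\le 2\,\omega_f(\mathrm{diam}_k)\to 0$. With that small addition the argument is complete.
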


We call a function $F$ satisfying the properties in the above theorem a continuous representation of $\mathcal{P}(X)$.

\subsubsection{Application of Koml\'os' theorem to random probabilities}

Recall that a sequence $(a_n)_{n \ge 1}$ is Cesaro convergent if the limit $\lim\limits_{n \to +\infty}\frac{1}{n}\sum\limits_{k = 1}^n a_k$ exists.  We restate the main result of \cite{komlos1967}.

\begin{theorem}[Koml\'os' theorem]
  Let $(X_n)_{n \ge 1}$ be a sequence of real valued random variables with $\sup\limits_n\E{|X_n|} < +\infty$.  Then there exists a deterministic subsequence $\{n_k\}_1^\infty$ and a random variable $Y$ with finite expectation so that the sequence $Y_{j} = X_{n_j}$ Cesaro converges almost surely to $Y$.  Furthermore, for any deterministic increasing sequence $\{a_j\}_1^\infty$, $(Y_{a_j})_{j \ge 1}$ has the same property.
\end{theorem}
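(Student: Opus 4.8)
This is the classical subsequence principle of Koml\'os, and it does not follow from the soft results collected above; so the plan is to reconstruct it by bootstrapping from the \(L^2\)-bounded case (an almost sure strengthening of the Banach--Saks theorem) up to the \(L^1\) setting, building the hereditary ``every further subsequence'' property into the construction from the start by working with controlled correlations rather than with norm convergence.

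The core is the following \(L^2\) lemma: if \((Z_n)\) is bounded in \(L^2\), then there is a subsequence \((Z_{n_k})\) and \(W \in L^2\) so that \emph{every} further subsequence has Ces\`aro averages converging almost surely to \(W\). I would prove it by first using weak-\(L^2\) compactness to extract a subsequence converging weakly to some \(W\); replacing \(Z_n\) by \(Z_n - W\) I may assume the weak limit is \(0\). Since \(\E{Z_{n_i}Z_{n_j}} \to 0\) as \(j \to \infty\) for each fixed \(i\), a diagonal extraction produces a further subsequence (still called \((Z_{n_k})\)) with rapidly decaying correlations, say \(|\E{Z_{n_i}Z_{n_j}}| \le 2^{-(i \vee j)}\). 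Writing \(S_N = \frac{1}{N}\sum_{k=1}^N Z_{n_k}\), orthogonality then gives
\[
\E{S_N^2} = \frac{1}{N^2}\left(\sum_{k=1}^{N}\E{Z_{n_k}^2} + 2\sum_{1 \le i<j\le N}\E{Z_{n_i}Z_{n_j}}\right) = O\!\left(\tfrac{1}{N}\right).
\]
Chebyshev and Borel--Cantelli along \(N = r^2\) give almost sure convergence of \(S_{r^2}\) to \(0\), and a maximal-inequality bound on the oscillation of \(S_N\) across each block \(r^2 \le N < (r+1)^2\) upgrades this to convergence of the full sequence \(S_N\). The key point is that every estimate used depends only on the summability of the correlations, a property inherited by every subsequence; this is exactly what delivers the hereditary conclusion.

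To pass from \(L^2\) to \(L^1\) I would truncate. For each level \(m\) the variables \(X_n^{(m)} = X_n \mathbf 1_{|X_n| \le m}\) are uniformly bounded, hence \(L^2\)-bounded, so the lemma applies; diagonalizing over \(m = 1,2,\dots\) yields a single subsequence \((n_k)\) along which, for every \(m\), the Ces\`aro averages of \(X_{n_k}^{(m)}\) converge almost surely and stably under further subsequences to some \(Y^{(m)}\). To handle the discarded part I would couple the truncation level to the position in the sequence: thinning further, choose levels \(m_k \uparrow \infty\) with \(\Pof{|X_{n_k}| > m_k} \le 2^{-k}\) (possible since \(\Pof{|X_{n_k}| > t} \to 0\) as \(t \to \infty\) for each fixed \(k\)), so that by Borel--Cantelli almost surely \(X_{n_k} = X_{n_k}\mathbf 1_{|X_{n_k}| \le m_k}\) for all large \(k\). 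A candidate limit \(Y\) should then emerge as \(m \to \infty\) from the \(Y^{(m)}\).

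The step I expect to be the real obstacle is reconciling these two truncations: the genuine average \(\frac{1}{N}\sum_{k \le N} X_{n_k}\) differs from the average of \(X_{n_k}^{(m)}\) by the ``medium-range'' contribution of terms with \(m < |X_{n_k}| \le m_k\), and because an \(L^1\)-bounded sequence need not be uniformly integrable, \(\E{|X_n|\mathbf 1_{|X_n|>m}}\) need not tend to \(0\) uniformly in \(n\). Controlling this medium range --- showing that, after a suitable further thinning, its Ces\`aro contribution is negligible and the \(Y^{(m)}\) form a Cauchy sequence in \(L^1\) --- is precisely the delicate gliding-hump estimate at the heart of Koml\'os' argument, and is the reason the theorem is substantially harder in \(L^1\) than the Banach--Saks theorem is in \(L^p\) for \(p > 1\). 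Once this is in place, the hereditary claim for a deterministic increasing sequence \(\{a_j\}\) is immediate, since both the correlation-decay bounds of the \(L^2\) lemma and the Borel--Cantelli truncation survive passage to any subsequence.
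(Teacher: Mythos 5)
The paper does not prove this statement at all: it is quoted verbatim as the main result of \cite{komlos1967}, so there is no internal argument to compare yours against. Judged on its own terms, your $L^2$ lemma is essentially correct and complete --- weak compactness, subtraction of the weak limit, diagonal extraction to force $|\E{Z_{n_i}Z_{n_j}}|\le 2^{-(i\vee j)}$, the $O(1/N)$ variance bound, Borel--Cantelli along $N=r^2$, the block estimate, and the observation that the correlation bound is inherited by every deterministic sub-subsequence all work. This is the almost-sure Banach--Saks theorem, and it is genuinely the first half of every known proof of Koml\'os' theorem.

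The gap is the one you name yourself, and it is not a technicality to be deferred: it is the entire content of the theorem beyond Banach--Saks. Moreover, the specific bridge you propose does not connect the two halves. Choosing $m_k\uparrow\infty$ with $\Pof{|X_{n_k}|>m_k}\le 2^{-k}$ does make the truncation a.s. eventually exact, but the variables $X_{n_k}\mathbf 1_{|X_{n_k}|\le m_k}$ are then only bounded in $L^2$ by $\E{X_{n_k}^2\mathbf 1_{|X_{n_k}|\le m_k}}\le m_k\,\E{|X_{n_k}|}$, which is unbounded, so your $L^2$ lemma does not apply to them; and the fixed-level truncations $X_n^{(m)}$, to which the lemma does apply, leave a tail whose Ces\`aro contribution you cannot control, since an $L^1$-bounded sequence need not be uniformly integrable (take $X_n=n\mathbf 1_{A_n}$ with $\Pof{A_n}=1/n$: every fixed-level truncation eventually vanishes while the sequence itself carries mass $1$). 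The known proofs replace your direct $O(1/N)$ variance bound by the extraction of a subsequence satisfying $\sum_k k^{-2}\,\E{X_{n_k}^2\mathbf 1_{|X_{n_k}|\le k}}<+\infty$ and $\sum_k\Pof{|X_{n_k}|>k}<+\infty$ --- neither of which follows from the $L^1$ bound alone and both of which require the gliding-hump selection --- followed by a Kronecker-lemma argument; without that estimate (proved so as to be stable under further deterministic thinning, which is also needed for the hereditary clause) the argument is incomplete. Given that the paper treats this as a black-box citation, the honest options are to do the same or to carry out that estimate in full.
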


We will need the following corollary of Koml\'os' theorem.
\begin{corollary}\label{komloscorollary}
  Let $(\mu_n)_{n \ge 0}$ be a sequence of random probabilities on a compact metric space $(X,d)$.  There exists a deterministic sequence $\{ n_j\}_1^\infty$ and a random probability $\mu$ on $X$ so that the subsequence $(\mu_{n_j})_{j \ge 1}$ Cesaro converges almost surely to $\mu$ on $X$.
\end{corollary}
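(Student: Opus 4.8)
Let me plan a proof.The plan is to reduce the weak convergence of measures to the Cesàro convergence of countably many real-valued sequences, to which Komlós' theorem applies. Since $(X,d)$ is a compact metric space, the Banach space $C(X)$ of continuous functions is separable; fix a countable dense set $\{f_k\}_{k \ge 1} \subset C(X)$. For weak convergence in the compact metrizable space $\mathcal{P}(X)$ it suffices to test against this dense family: a sequence $\nu_N \in \mathcal{P}(X)$ converges weakly to $\nu$ if and only if $\int f_k \, d\nu_N \to \int f_k \, d\nu$ for every $k$. Thus I will work with the real random variables $X_n^{(k)} = \int f_k \, d\mu_n$, which satisfy $\sup_n \E{|X_n^{(k)}|} \le \|f_k\|_\infty < +\infty$, exactly the hypothesis of Komlós' theorem.

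Next I would run a diagonal extraction. Set $S_0 = \N$. Having chosen a subsequence $S_{k-1}$, apply Komlós' theorem to $(X_m^{(k)})_{m \in S_{k-1}}$ to obtain a further deterministic subsequence $S_k \subseteq S_{k-1}$ and a random variable $Y^{(k)}$ with finite mean such that the Cesàro averages of $X^{(k)}$ along $S_k$ converge almost surely to $Y^{(k)}$. The crucial point is the ``furthermore'' clause of Komlós' theorem: since for each $i < k$ the averages of $X^{(i)}$ already Cesàro converge to $Y^{(i)}$ along $S_{k-1}$, they continue to do so along the subsequence $S_k$. Let $n_j$ be the $j$-th element of $S_j$, chosen strictly increasing. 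Then for each fixed $k$ the tail $\{n_j : j \ge k\}$ is a subsequence of $S_k$, and since discarding finitely many leading terms does not affect a Cesàro limit, we obtain that almost surely, for every $k$,
$$\frac{1}{N}\sum_{j=1}^N X_{n_j}^{(k)} \longrightarrow Y^{(k)}, \qquad N \to +\infty.$$
Intersecting the countably many almost sure events, all these limits hold simultaneously on a single event of full probability.

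Finally I would identify the limiting measure. Write $\nu_N = \frac{1}{N}\sum_{j=1}^N \mu_{n_j} \in \mathcal{P}(X)$, so that $\int f_k \, d\nu_N = \frac{1}{N}\sum_{j=1}^N X_{n_j}^{(k)} \to Y^{(k)}$ for all $k$. Since $X$ is compact, $\mathcal{P}(X)$ is compact, so on the full-probability event the sequence $(\nu_N)$ has at least one weak limit point $\mu$; along a realizing subsequence $\int f_k \, d\nu_N \to \int f_k \, d\mu$, forcing $\int f_k \, d\mu = Y^{(k)}$ for every $k$. By density of $\{f_k\}$ in $C(X)$ this pins down $\mu$ uniquely, so every weak limit point of $(\nu_N)$ equals $\mu$, and compactness then yields $\nu_N \to \mu$ weakly. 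The resulting $\mu = \mu(\omega)$ is a genuine random element of $\mathcal{P}(X)$: it is determined by the measurable maps $\omega \mapsto Y^{(k)}(\omega)$, and since the Borel $\sigma$-algebra of $\mathcal{P}(X)$ is generated by the evaluations $\nu \mapsto \int f_k \, d\nu$, the map $\omega \mapsto \mu(\omega)$ is measurable.

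The main obstacle I anticipate is organizing the diagonalization so that the earlier Cesàro limits are not destroyed when refining the subsequence; this is precisely what the ``furthermore'' clause of Komlós' theorem is for, and getting the bookkeeping of nested subsequences and the diagonal right is the delicate part. A secondary point requiring care is passing from convergence of the scalar averages $Y^{(k)}$ to the existence of a single random limit measure, which is handled cleanly by compactness of $\mathcal{P}(X)$ together with the density of the test functions, rather than by attempting to construct $\mu$ by hand.
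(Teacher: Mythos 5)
Your proposal is correct and follows essentially the same route as the paper: a dense countable family in $C(X)$, an inductive application of Koml\'os' theorem using its stability under further subsequences, and a diagonal extraction. Your closing step identifying the limit $\mu$ via compactness of $\mathcal{P}(X)$ and measurability via the evaluation maps is slightly more detailed than the paper's, which simply asserts the conclusion after the diagonalization.
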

\begin{proof}
In this proof we use the notation $\nu(f) = \int\limits_{X} f(x)d \nu(x)$.

Let $(f_n)_{n \ge 1}$ be a dense sequence in the space of continuous functions from $X$ to $\R$ (with respect to the topology of uniform convergence).

 Applying Koml\'os' theorem to $\left(\mu_n(f_1)\right)_{n \ge 1}$ one obtains a subsequence $n_{1,k} \to +\infty$ such that $\mu_{n_{1,k}}(f)$ Cesaro converges almost surely and any further subsequence has the same property.
 
 For $i = 1,2,3,4,\ldots$, inductively applying Koml\'os' theorem to $(\mu_{n_{i,k}}(f_{i+1}))_{k \ge 1}$ we obtain a subsequence $(n_{i+1,k})_{k \ge 1}$ of $(n_{i,k})_{k \ge 1}$ such that $\mu_{n_{i+1,k}}(f_{i+1})$ Cesaro converges almost surely and any further subsequence has the same property.
 
 Setting $n_k = n_{k,k}$ one obtains that $(\mu_{n_k})_{n \ge 1}$ Cesaro converges to a random probability $\mu$ almost surely.
\end{proof}

\subsection{Furstenberg type formula for speed}

\subsubsection{Statement and proof}

\begin{theorem}[Furstenberg type formula for distance stationary sequences]\label{furstenbergtheorem}
Let $(x_n)_{n \in \Z}$ be a distance stationary sequence in a complete separable metric space $(M,d)$ satisfying $\E{d(x_0,x_n)} < +\infty$ and $\ell$ be its linear drift.

Suppose there exists a random variable $u$ which is uniformly distributed on $[0,1]$ and independent from $(x_n)_{n \in \Z}$.  Then the following holds:
\begin{enumerate}
 \item The sequence of random probability measures on $\widehat{M}$ defined by $\mu_n = \frac{1}{n}\sum\limits_{i = 1}^n \delta_{\xi_{x_{-i}}}$ has a subsequence which is almost surely Cesaro convergent to a random probability $\mu$.
 \item There exists a random horofunction $\xi$ which is measurable with respect to $\sigma(u,\mu)$ and whose conditional distribution given $(x_n)_{n \in \Z}$ is $\mu$.
 \item The sequence of increments $(\xi(x_n)-\xi(x_{n+1}))_{n \in \Z}$ is stationary and its Birkhoff limit equals $\ell$ almost surely.  In particular, $\E{\ell} = \E{\xi(x_0)-\xi(x_1)}$.
\end{enumerate}
\end{theorem}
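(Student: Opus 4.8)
The plan is to establish the three items essentially in order, invoking the two recorded black boxes (Corollary~\ref{komloscorollary} and Theorem~\ref{blackwell-dubinstheorem}) for the first two, and reserving the genuine work for the stationarity and the identification of the Birkhoff limit in item (3). For item (1), since \(\widehat{M}\) is compact (shown in the preliminaries), the \(\mu_n = \frac{1}{n}\sum_{i=1}^n\delta_{\xi_{x_{-i}}}\) are random probabilities on a compact metric space, so Corollary~\ref{komloscorollary} directly supplies a deterministic subsequence \((n_j)\) and a random probability \(\mu\), measurable with respect to \(\sigma((x_n))\), such that \(\mu_{n_j}\) is Cesàro convergent to \(\mu\) almost surely. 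For item (2), I would apply the Blackwell–Dubins representation \(F:\mathcal{P}(\widehat{M})\times[0,1]\to\widehat{M}\) and set \(\xi = F(\mu,u)\); measurability with respect to \(\sigma(u,\mu)\) is then immediate, and conditionally on \((x_n)\) the measure \(\mu\) is determined while \(u\) remains uniform and independent, so property (1) of \(F\) gives that \(\xi\) has conditional law \(\mu\).

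The stationarity in item (3) is where I expect the main obstacle. The starting observation is that the map \(\Psi:(x_n)\mapsto\mu\) is shift invariant: replacing \((x_n)\) by \((x_{n+1})\) changes each \(\mu_m\) by an empirical measure differing in total variation by at most \(2/m\) (one atom added, one removed), so the Cesàro limits along \((n_j)\) coincide and \(\mu\) is unchanged. The delicate point is that \(\xi\) is a genuine function on \(M\) normalized at \(o\), so one cannot naively ``keep \(\xi\) and shift \(x\)'': the sequence \((x_n)\) is only distance stationary, not stationary. I would resolve this by checking that the increment array \(s_n=\xi(x_n)-\xi(x_{n+1})\) depends on \((x_n)\) only through the stationary distance matrix \((d(x_m,x_n))\), using that base-point terms cancel in differences, since for a horofunction realized as a limit of \(\xi_{x_{-i_k}}\) one has \(\xi(x_n)-\xi(x_{n+1})=\lim_k\bigl(d(x_{-i_k},x_{n+1})-d(x_{-i_k},x_n)\bigr)\). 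Reformulating the construction with these base-point-free combinatorial horofunctions makes \(\mu\), \(\xi\), and the array \((s_n)\) measurable functions of the distance matrix and \(u\); shift invariance of \(\mu\) together with distance stationarity then yields that \((s_n)\) and \((s_{n+1})\) have the same law. Making this factorization through the distance matrix rigorous is the technical heart of the argument.

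Granting stationarity, the Birkhoff limit is a short squeeze. Here \(|s_0|=|\xi(x_0)-\xi(x_1)|\le d(x_0,x_1)\) is integrable, so Birkhoff's theorem gives almost sure and \(L^1\) convergence of \(\frac{1}{N}\sum_{n=0}^{N-1}s_n\) to some \(\bar s\) with \(\E{\bar s}=\E{s_0}\). The sum telescopes to \(\frac{1}{N}(\xi(x_0)-\xi(x_N))\), so \(\bar s=-\lim_N \xi(x_N)/N\); since \(\xi\) is \(1\)-Lipschitz with \(\xi(o)=0\) we have \(|\xi(x_N)|\le d(o,x_N)\) and \(\frac{1}{N}d(o,x_N)\to\ell\), whence \(\bar s\le \ell\) almost surely. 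Separately I would compute \(\E{s_0}\) directly: conditioning on \((x_n)\) and using that \(\eta\mapsto\eta(x_0)-\eta(x_1)\) is bounded and continuous on \(\widehat{M}\), \(\E{s_0}\) equals the limit of \(\frac{1}{n_j}\sum_{i=1}^{n_j}\E{d(x_{-i},x_1)-d(x_{-i},x_0)}\) (bounded convergence justifying the interchange); distance stationarity makes the inner \(i\)-sum telescope to \(\frac{1}{n_j}\bigl(\E{d(x_0,x_{n_j+1})}-\E{d(x_0,x_1)}\bigr)\to\E{\ell}\), so \(\E{s_0}=\E{\ell}\).

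Combining the two facts, \(\ell-\bar s\ge 0\) almost surely with \(\E{\ell-\bar s}=0\) forces \(\bar s=\ell\) almost surely; this simultaneously gives \(\xi(x_N)/N\to-\ell\) and the stated identity \(\E{\ell}=\E{s_0}=\E{\xi(x_0)-\xi(x_1)}\). I emphasize that only distance stationarity (never full stationarity of \((x_n)\)) is used throughout, which is exactly why the base-point cancellation in the stationarity step cannot be bypassed and remains the crux of the proof.
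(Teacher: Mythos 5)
Your proposal is correct and follows essentially the same route as the paper: Koml\'os' corollary for the Ces\`aro-convergent subsequence, Blackwell--Dubins for realizing $\xi$, base-point cancellation $\xi_{x_{-i}}(x_n)-\xi_{x_{-i}}(x_{n+1})=d(x_{-i},x_{n+1})-d(x_{-i},x_n)$ plus distance stationarity (with the $O(1/n_j)$ reindexing error) for stationarity of the increments, and the telescoping/uniform-integrability computation followed by the squeeze $\bar s\le\ell$, $\E{\bar s}=\E{\ell}$. The step you flag as the ``technical heart'' is exactly what the paper carries out by testing the increment array of the approximants $\xi_k=F(u,\frac{1}{k}\sum_j\mu_{n_j})$ against bounded continuous functionals, i.e.\ your factorization through the distance matrix made explicit.
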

\begin{proof}
The fact that $(\mu_n)_{n \ge 1}$ has an almost surely Cesaro convergent subsequence follows directly from the version of Koml\'os' theorem for random probabilities given above (see Corollary \ref{komloscorollary}).  Let $(\mu_{n_j})_{j \ge 1}$ be such a subsequence and $\mu$ be its almost sure Cesaro limit.

Let $F:\mathcal{P}(\widehat{M})\times [0,1] \to \widehat{M}$ be continuous representation of $\mathcal{P}(\widehat{M})$, as given by Theorem \ref{blackwell-dubinstheorem} and define $\xi = F(u,\mu)$.  Clearly $\xi$ is $\sigma(u,\mu)$-measurable and its conditional distribution given $(x_n)_{n \in \Z}$ is $\mu$.

We will now show that $\E{\xi(x_0)-\xi(x_1)} = \ell$.  

For this purpose let $\xi_k = F(u,\frac{1}{k}\sum\limits_{j = 1}^k\mu_{n_j})$ and notice that $\xi_k \to \xi$ almost surely when $k \to +\infty$.  Because horofunction are $1$-Lipschitz one has $|\xi_k(x_0)-\xi_k(x_1)| \le d(x_0,x_1)$.   Since $\E{d(x_0,x_1)} < +\infty$ this implies that the sequence is uniformly integrable and one obtains $\E{\xi(x_0)-\xi(x_1)} = \lim\limits_{k \to +\infty}\E{\xi_k(x_0) - \xi_k(x_1)}$.

For the sequence on the right hand side using distance stationarity one obtains

\begin{align*}
\E{\xi_k(x_0) - \xi_k(x_1)} &= \frac{1}{k}\sum\limits_{j = 1}^k\E{\frac{1}{n_j}\sum\limits_{i = 1}^{n_j} \xi_{x_{-i}}(x_0) - \xi_{x_{-i}}(x_1)}
\\ &= \frac{1}{k}\sum\limits_{j = 1}^k\E{\frac{1}{n_j}\sum\limits_{i = 1}^{n_j} -d(x_{-i},x_0) + d(x_{-i},x_1)}
\\ &= \frac{1}{k}\sum\limits_{j = 1}^k\E{\frac{1}{n_j}\sum\limits_{i = 1}^{n_j} -d(x_{0},x_i) + d(x_0,x_{i+1})}
\\ &= \frac{1}{k}\sum\limits_{j = 1}^k\E{\frac{-d(x_0,x_1) + d(x_0,x_{n_j+1})}{n_j}}.
\end{align*}

Taking the limit when $k \to +\infty$ above it follows that $\E{\xi(x_0)-\xi(x_1)} = \E{\ell}$ as claimed.

We will now prove that $\left(\xi(x_n) - \xi(x_{n+1})\right)_{n\in \Z}$ is stationary.  

Suppose $F:\R^\Z \to \R$ is continuous and bounded and notice that by distance stationarity one has
\begin{equation*}
\begin{split}
\mathbb{E}(F((\xi_k(x_{n+1}) &- \xi_k(x_{n+2}))_{n \in \Z}))\\ &= \frac{1}{k}\sum\limits_{j = 1}^k\frac{1}{n_j}\sum\limits_{i = 1}^{n_j}\E{ F\left(\left(\xi_{x_{-i}}(x_{n+1}) - \xi_{x_{-i}}(x_{n+2})\right)_{n \in \Z}\right)}
\\ &= \frac{1}{k}\sum\limits_{j = 1}^k\frac{1}{n_j}\sum\limits_{i = 1}^{n_j}\E{ F\left(\left(\xi_{x_{-(i-1)}}(x_{n}) - \xi_{x_{-(i-1)}}(x_{n+1})\right)_{n \in \Z}\right)}
\\ &= \frac{1}{k}\sum\limits_{j = 1}^k\frac{1}{n_j}\sum\limits_{i = 0}^{n_j-1}\E{ F\left(\left(\xi_{x_{-i}}(x_{n}) - \xi_{x_{-i}}(x_{n+1})\right)_{n \in \Z}\right)}
\\ &= C_k \max |F| + \E{F\left(\left(\xi_k(x_{n}) - \xi_k(x_{n+1})\right)_{n \in \Z}\right)}
\end{split}
\end{equation*}
where $|C_k| \le \frac{1}{k}\sum\limits_{j = 1}^k \frac{2}{n_j} \to 0$ when $k \to +\infty$.

From this the stationarity of the increments of $\xi$ along the sequence $(x_n)_{n \in \Z}$, follows directly taking limit when $k \to +\infty$.

By Birkhoff's theorem the Birkhoff averages of the increments of $\xi$ along the sequence exist almost surely and in $L^1$.  Additionally, because horofunctions are $1$-Lipschitz, one has
\[\lim\limits_{n \to +\infty} \frac{1}{n}\sum\limits_{k = 0}^{n-1} \xi(x_k) - \xi(x_{k+1}) \le \ell\]
almost surely.  But the expectation of the left hand side in the above inequality is $\E{\xi(x_0)-\xi(x_1)} = \E{\ell}$.  Hence both sides coincide almost surely.  This concludes the proof.
\end{proof}

\subsubsection{Boundary horofunctions}

The question of whether the random horofunction $\xi$ given by Theorem \ref{furstenbergtheorem} is almost surely on the horofunction boundary of $M$ sometimes arises.

A trivial example where this is not the case is obtained by letting $(x_n)_{n \in \Z}$ be an i.i.d. sequence of uniformly distributed random variables on $[0,1]$.  In this case the horofunction $\xi$ given by Theorem \ref{furstenbergtheorem} will be uniformly distributed on $[0,1]$ and independent from the sequence.

In the previous example the linear drift $\ell$ was $0$ almost surely.  It is not difficult to show that if $\ell > 0$ almost surely then $\xi$ must be a boundary horofunction almost surely.

However, in many examples Theorem \ref{furstenbergtheorem} can be used to decide whether or not $\ell$ is positive.  Hence it is useful to have a criteria for establishing that $\xi$ is almost surely on the horofunction boundary without knowledge of $\ell$.   The following proposition is such a result.

\begin{proposition}\label{corollaryboundary}
In the context of Theorem \ref{furstenbergtheorem}, if in addition $M$ is geodesic and proper, and if $\Pof{x_n \in K} \to 0$ when $n \to -\infty$ for all bounded sets $K$, then the random horofunction $\xi$ is almost surely on the horofunction boundary.
\end{proposition}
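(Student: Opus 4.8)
The plan is to show that the random probability $\mu$ produced in Theorem \ref{furstenbergtheorem} almost surely assigns full mass to the horofunction boundary $\widehat{M}\setminus M$; since the conditional distribution of $\xi$ given $(x_n)_{n\in\Z}$ is $\mu$, this immediately gives $\Pof{\xi\in M}=\E{\mu(M)}=0$, which is the assertion. Throughout I identify $M$ with the set $\{\xi_x:x\in M\}$ of interior horofunctions inside $\widehat{M}$, and I write $\mu_n=\frac1n\sum_{i=1}^n\delta_{\xi_{x_{-i}}}$ together with the Cesàro averages $\bar\mu_k=\frac1k\sum_{j=1}^k\mu_{n_j}$ along the subsequence $(n_j)$ furnished by part (1) of that theorem, so that $\bar\mu_k\to\mu$ weakly almost surely.

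First I would reduce the claim to controlling $\mu$ on bounded balls. Writing $M=\bigcup_{r\in\N}B(o,r)$, it suffices to prove that $\mu(B(o,r))=0$ almost surely for each fixed $r$, where $B(o,r)$ is viewed inside $\widehat{M}$ via the embedding. The decisive structural input is Proposition \ref{prop:embedding}: since $M$ is geodesic and proper, for the bounded open set $U_r=B(o,r)$ the closure $\overline{M\setminus U_r}$ in $\widehat{M}$ is disjoint from $U_r$; moreover, every boundary horofunction is a limit of an unbounded sequence and hence lies in $\overline{M\setminus U_r}$, so $U_r=\widehat{M}\setminus\overline{M\setminus U_r}$ is open in $\widehat{M}$ and contains only interior horofunctions. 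This is exactly the point that keeps the interior from merging with the boundary and lets the portmanteau inequality run in the useful direction.

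With $U_r$ open in $\widehat{M}$, the portmanteau theorem applied to the almost sure weak convergence $\bar\mu_k\to\mu$ gives $\mu(U_r)\le\liminf_k\bar\mu_k(U_r)$ almost surely. Taking expectations and applying Fatou's lemma yields $\E{\mu(U_r)}\le\liminf_k\E{\bar\mu_k(U_r)}$. Now each $\mu_{n_j}$ is supported on the interior horofunctions $\xi_{x_{-i}}$, so $\mu_{n_j}(U_r)=\frac{1}{n_j}\#\{1\le i\le n_j:x_{-i}\in B(o,r)\}$, and hence
\[
\E{\bar\mu_k(U_r)}=\frac1k\sum_{j=1}^k\frac{1}{n_j}\sum_{i=1}^{n_j}\Pof{x_{-i}\in B(o,r)}.
\]
By hypothesis $\Pof{x_{-i}\in B(o,r)}\to0$ as $i\to\infty$, so its Cesàro averages tend to $0$, and averaging again over $j$ preserves the limit $0$; thus $\E{\bar\mu_k(U_r)}\to0$. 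Therefore $\E{\mu(U_r)}=0$, whence $\mu(B(o,r))=0$ almost surely, and a countable union over $r\in\N$ gives $\mu(M)=0$ almost surely, completing the argument.

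I expect the main obstacle to be the second paragraph: verifying through Proposition \ref{prop:embedding} that bounded open subsets of $M$ remain open in $\widehat{M}$ and contain no boundary horofunctions. Everything downstream is a routine portmanteau-plus-Fatou estimate, but this openness is precisely what makes the inequality $\mu(U_r)\le\liminf_k\bar\mu_k(U_r)$ available; without properness the interior could accumulate onto the boundary and the bound could fail. One should also take care that the interchange of limit and expectation uses only Fatou (nonnegativity of $\bar\mu_k(U_r)$), and that the identification of $\mu_{n_j}(U_r)$ with a point count is legitimate because each $\bar\mu_k$ is supported on interior horofunctions.
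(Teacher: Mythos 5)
Your proof is correct and follows essentially the same route as the paper's: both use Proposition \ref{prop:embedding} to get openness of bounded balls in $\widehat{M}$, then combine the portmanteau inequality for the almost sure Cesàro convergence with Fatou's lemma and the hypothesis $\Pof{x_{-i}\in B(o,r)}\to 0$. Your second paragraph merely makes explicit a point the paper leaves implicit (why $B(o,r)$ is genuinely open in $\widehat{M}$ and contains no boundary horofunctions), which is a worthwhile clarification but not a different argument.
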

\begin{proof}
We will use the notation from Theorem \ref{furstenbergtheorem}.  Let $\nu_n$ denote the sequence of averages of the subsequence of the probabilities $(\mu_n)$ which Cesaro converges to $\mu$ almost surely.

Given a bounded set $K$ pick an open ball $U=B(o,r)$ containing the closure of $K$. From the hypothesis it follows that 
$$\E{\mu_n(U)}=\E{\frac{1}{n}\sum_{i=1}^n\mathds{1}(x_{-i}\in U)}=\frac{1}{n}\sum_{i=1}^n \Pof{x_{-i}\in U} \to 0$$
when $n \to +\infty$.   Therefore $\E{\nu_n(U)} \to 0$ as well. 

By Proposition \ref{prop:embedding} the mapping $x\mapsto \xi_x$ is an embedding, so $U$ is open in $\widehat{M}$. Because $\nu_n \to \mu$ one has $\mu(U) \le \liminf\limits_n \nu_n(U)$ almost surely.  Combining this with Fatou's lemma one obtains
\[\Pof{\xi \in K} = \E{\mu(K)} \le \E{\mu(U)} \le \E{\liminf\limits_n\nu_n(U)} \le \liminf\limits_n \E{\nu_n(U)} = 0,\]
and hence $\xi \notin K$ almost surely.
\end{proof}

\appendix

\section{The sum of a horofunction and the distance function in the hyperbolic plane}

Recall that given a boundary horofunction $\xi$ we have defined $f_\xi(x) = \xi(x) + d(o,x)$.  

We recall that the upper half plane model of the hyperbolic plane is obtained identifying $\H^2$ with $\lbrace x+iy \in \C: y > 0\rbrace$ with the metric $\frac{1}{y^2}(dx^2+dy^2)$.   In this model we will set the base point $o = i$.  We will use explicit formulas for the distance function and horofunctions in this model, as well as the correspondence between the horofunction boundary and points on the extended real line (see for example \cite[Exercises 2.2, 6.10, 6.11]{bonahon}).

\begin{lemma}\label{ellipselemma}
In the upper half plane model of the hyperbolic plane the function $f_\xi$ asociated to the boundary point at $\infty$ is given by  
\[f(z) = 2\log\left(\frac{|z-i| + |z+i|}{2}\right).\]

In particular $f$ extends to all of $\C$ as a continuous function whose level sets are ellipses with foci at $\pm i$.
\end{lemma}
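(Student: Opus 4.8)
The plan is to compute explicitly, in coordinates $z = x+iy$ with $y > 0$, both the boundary horofunction $\xi$ associated to $\infty$ and the distance $d(i,z)$, and then to combine them. First I would identify the horofunction. Taking the base point $o = i$ and approaching $\infty$ along the imaginary axis through $it$, the relevant limit is
\[\xi(z) = \lim_{t \to +\infty}\bigl(d(it, i) - d(it, z)\bigr).\]
Using $d(it,i) = \log t$ together with the standard distance formula, a short computation shows $\cosh d(it,z) \sim \frac{t}{2y}$, hence $d(it, z) = \log t - \log y + o(1)$ as $t \to +\infty$, so that $\xi(z) = \log y$. This is consistent with the normalization $\xi(o) = \xi(i) = \log 1 = 0$.

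Next I would compute $d(i,z)$. The hyperbolic distance formula in the upper half plane gives
\[\cosh d(i,z) = 1 + \frac{|z-i|^2}{2y} = \frac{|z|^2 + 1}{2y}.\]
The key algebraic step is the factorization
\[(|z|^2+1)^2 - 4y^2 = \bigl(x^2 + (y-1)^2\bigr)\bigl(x^2 + (y+1)^2\bigr) = |z-i|^2\,|z+i|^2,\]
from which $\sinh d(i,z) = \frac{|z-i|\,|z+i|}{2y}$, and therefore
\[e^{d(i,z)} = \cosh d(i,z) + \sinh d(i,z) = \frac{|z|^2 + 1 + |z-i|\,|z+i|}{2y}.\]

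I would then combine the two pieces. Since $f(z) = \xi(z) + d(i,z) = \log y + d(i,z)$, exponentiating gives $e^{f(z)} = y\,e^{d(i,z)}$. The remaining identity is
\[\bigl(|z-i| + |z+i|\bigr)^2 = |z-i|^2 + |z+i|^2 + 2|z-i|\,|z+i| = 2\bigl(|z|^2 + 1 + |z-i|\,|z+i|\bigr),\]
so that $e^{f(z)} = \frac{(|z-i|+|z+i|)^2}{4}$, which on taking logarithms is exactly $f(z) = 2\log\bigl(\frac{|z-i|+|z+i|}{2}\bigr)$. For the final assertion I would note that the right hand side is a well-defined continuous function on all of $\C$: by the triangle inequality $|z-i| + |z+i| \ge |{-2i}| = 2 > 0$ for every $z$, so the logarithm is never evaluated at a nonpositive argument, and the level sets $\{\,|z-i| + |z+i| = \mathrm{const}\,\}$ are by definition ellipses with foci at $\pm i$.

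The main obstacle is purely computational and lies in recognizing the two algebraic identities: the factorization of $(|z|^2+1)^2 - 4y^2$ into $|z-i|^2|z+i|^2$, and the ellipse-squared identity relating $(|z-i|+|z+i|)^2$ to $|z|^2 + 1 + |z-i|\,|z+i|$. Together these convert the sum $\log y + d(i,z)$ into the clean logarithm-of-ellipse form; once they are in hand the rest is immediate.
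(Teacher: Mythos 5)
Your proof is correct and follows essentially the same route as the paper: identify the horofunction at $\infty$ as $\xi(x+iy)=\log y$, write the distance $d(i,z)$ in the form $2\log\left(\frac{|z-i|+|z+i|}{2\sqrt{y}}\right)$, and add. The paper simply quotes these two formulas and calls the result a direct calculation, whereas you supply the $\cosh/\sinh$ derivation and the two algebraic identities explicitly; this is a welcome expansion, not a different argument.
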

\begin{proof}
  The proof is by direct calculation.   The horofunction asociated to the boundary point at $\infty$ is $\xi(x+iy) = \log(y)$.  The distance $d(o,x+iy)$ can be calculated explicitely and is given by
  \[d(o,x+iy) = 2\log\left(\frac{|z-i|+|z+i|}{2\sqrt{y}}\right).\]
\end{proof}

\begin{figure}[h!]
\centering
\includegraphics[scale=1.2]{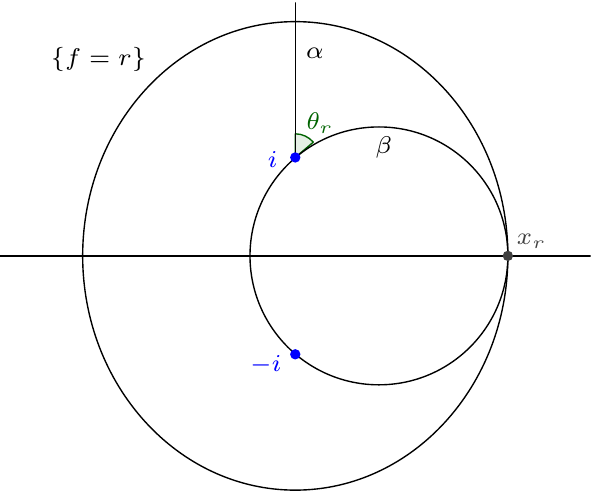}
\caption{\label{conesetfigure}Illustration of the proof of Lemma \ref{conesetlemma}.}
\end{figure}

The above calculation allows us to estimate the angular size of the level sets of $f_\xi$ as viewed from $o$.
\begin{lemma}\label{conesetlemma}
 For each $r > 0$ and all boundary horofunctions $\xi$ on $\H^2$ the set $\lbrace f_\xi > r\rbrace$ occupies a visual angle of at most \(4\arctan((e^{r}-1)^{-1/2})\) when viewed from \(o\).
\end{lemma}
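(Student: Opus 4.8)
The plan is to reduce to a single explicit horofunction and then read off the visual angle from the ellipse picture of Lemma \ref{ellipselemma}. First I would exploit the rotational symmetry of $\H^2$ about the base point $o$: the elliptic isometries fixing $o$ act transitively on $\partial \H^2$, and for such an isometry $g$ one checks that $f_{\xi_{g\zeta}} = f_{\xi_\zeta}\circ g^{-1}$, where $\xi_\zeta$ denotes the boundary horofunction associated to $\zeta \in \partial\H^2$. Since $g$ fixes $o$ and is an isometry, the visual angle subtended at $o$ by $\{f_{\xi_{g\zeta}} > r\}$ equals that subtended by $\{f_{\xi_\zeta} > r\}$. Hence it suffices to treat one boundary horofunction, and I take the one at $\infty$ furnished by Lemma \ref{ellipselemma}, for which $f(z) = 2\log\big((|z-i|+|z+i|)/2\big)$ with $o = i$.

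With this formula the superlevel set $\{f > r\}$ is the exterior of the ellipse $\{|z-i|+|z+i| = 2e^{r/2}\}$, whose foci are $\pm i$ and which meets the real axis at $\pm b$ with $b = \sqrt{e^r-1}$; moreover $f$ extends continuously to $\overline{\H^2}$ with $f(\eta) = \log(\eta^2+1)$ for $\eta \in \R$, so that $f(\eta) \le r$ exactly when $|\eta| \le b$. The crucial point is that $o = i$ is a \emph{focus} of this family of confocal ellipses. Along a geodesic ray from $o$ the function $f$ is convex, being the sum of the Busemann function $\xi$ and the distance $d(o,\cdot)$, both of which are convex in the $\mathrm{CAT}(0)$ space $\H^2$. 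A geodesic ray to an ideal endpoint $\eta$ with $\lim f = \log(\eta^2+1) \le r$ therefore has $f$ non-increasing, and so stays in $\{f \le r\}$, while for every other endpoint the ray must exit into $\{f > r\}$. Consequently a ray from $o$ meets $\{f > r\}$ if and only if its endpoint lies outside the interval $[-b,b] \subset \partial \H^2$.

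It then remains to compute the visual angle at $o = i$ of the complementary boundary arc $\partial\H^2 \setminus (-b,b)$, the arc through $\infty$. A direct computation of the initial tangent at $i$ of the geodesic to a real point $\eta$ shows that the direction to $\eta = b$ makes polar angle $\arctan\!\big(\tfrac{b^2-1}{2b}\big)$ with the positive real direction, and symmetrically for $-b$; the complementary arc then subtends $\pi - 2\arctan\!\big(\tfrac{b^2-1}{2b}\big)$. Writing $\beta = \arctan(1/b)$ and using $\tfrac{b^2-1}{2b} = \cot 2\beta = \tan(\tfrac\pi2 - 2\beta)$ collapses this to $4\beta = 4\arctan(1/b) = 4\arctan\big((e^r-1)^{-1/2}\big)$, which is exactly the claimed bound (in fact an equality).

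The main obstacle is the middle step: passing from the pointwise boundary behaviour $f(\eta) = \log(\eta^2+1)$ to the global statement that precisely the rays with $|\eta| > b$ enter $\{f > r\}$. Naively the superlevel set is an unbounded Euclidean exterior surrounding the focus $o$, so one must rule out that a ray to some $\eta \in (-b,b)$ dips into $\{f > r\}$ before returning; this is exactly where geodesic convexity of $f$, equivalently the hyperbolic convexity of the confocal elliptical disks, is needed, and is the one place the argument uses more than the explicit formula of Lemma \ref{ellipselemma}.
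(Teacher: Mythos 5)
Your overall route is the same as the paper's: reduce to the horofunction at $\infty$ by rotational symmetry about $o$, invoke the ellipse formula of Lemma \ref{ellipselemma}, and compute the visual angle at the focus $o=i$ of the arc of ideal points outside $[-\sqrt{e^r-1},\sqrt{e^r-1}]$. (The paper passes to the disk via $z\mapsto(z-i)/(z+i)$ instead of computing the initial tangent of the geodesic to $b$, but the arithmetic is equivalent and both give $4\arctan((e^r-1)^{-1/2})$, indeed with equality.) The one place you go beyond the paper --- justifying that a ray from $o$ to an ideal point $\eta$ with $f(\eta)\le r$ never dips into $\{f_\xi>r\}$ --- addresses a step the paper is silent on, but your justification is wrong. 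With this paper's sign convention $\xi_x(y)=d(x,o)-d(x,y)$ (see the remark following Theorem \ref{karlssonledrappiertheorem}), a boundary horofunction is the \emph{negative} of the usual Busemann function, hence concave along geodesics, and $f_\xi=\xi+d(o,\cdot)$ is a difference of convex functions with no definite convexity. Concretely, along the unit-speed ray from $i$ to the ideal point $1$ one computes $f(t)=\log(1+\tanh t)$, which is strictly \emph{concave} and strictly increasing from $0$ to $\log 2$; your chain ``convex and bounded above, hence non-increasing, hence $\le f(o)=0$'' would force $f\le 0$ on that ray, which is false.

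The repair is simpler than convexity. Any horofunction is $1$-Lipschitz, so along a unit-speed ray $\gamma$ from $o$ one has, for $t\ge s$,
\[
f_\xi(\gamma(t))-f_\xi(\gamma(s))=\bigl(\xi(\gamma(t))-\xi(\gamma(s))\bigr)+(t-s)\ \ge\ -(t-s)+(t-s)=0,
\]
since $d(o,\gamma(t))=t$. Thus $f_\xi$ is non-decreasing along every ray issuing from $o$, and by the continuous extension of $f$ to $\overline{\H^2}\setminus\{\infty\}$ its supremum on the ray to $\eta\in\R$ is the boundary value $\log(\eta^2+1)$. This yields exactly the dichotomy you wanted --- the ray meets $\{f_\xi>r\}$ if and only if $\eta=\infty$ or $|\eta|>\sqrt{e^r-1}$ --- and with that substitution the rest of your argument, including the final trigonometric identity, goes through unchanged.
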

\begin{proof}
 Given $\xi$ there is a unique unit speed geodesic $\alpha(t) = \exp_o(tv)$ such that $\xi(\alpha(t)) = t$.

 In the upper half plane model of $\H^2$ we may assume that the geodesic $\alpha(t)$ discussed above is $\alpha(t) = e^t i$.  And therefore that $\xi(x+iy) = \log(y)$.  By Lemma \ref{ellipselemma} one has 
 \[f_\xi(z) = f(z) = 2\log\left(\frac{|z-i|+|z+i|}{2}\right).\]
 
 Let $x_r > 0$ be such that $f(x_r) = r$.   It suffices to calculate the angle $\theta_r$ at $o$ between the geodesic ray $\alpha$ and the geodesic ray $\beta$ starting at $o$ whose endpoint is $x_r$.
 
 For this purpose we use the conformal transformation $z \mapsto \frac{z-i}{z+i}$ which maps the upper half plane to the unit disk.  Notice that $\alpha$ goes to the segment $[0,1]$ under this transformation.  On the other hand $\beta$ goes to another radius of the unit disk.  Hence, the angle $\theta_r$ is the absolute value of the smallest argument of $\frac{x_r-i}{x_r+i}$ from which one obtains
 \[\theta_r = 2\arctan\left(\frac{1}{x_r}\right).\]
 
 To conclude the proof one calculates from the equation $f(x_r) = r$ obtaining
 \[x_r = \sqrt{e^r-1}.\]
\end{proof}

% \bib, bibdiv, biblist are defined by the amsrefs package.
\begin{bibdiv}
\begin{biblist}

\bib{aldous-lyons}{article}{
      author={Aldous, David},
      author={Lyons, Russell},
       title={Processes on unimodular random networks},
        date={2007},
     journal={Electronic Journal of Probability},
      volume={12},
       pages={1454\ndash 1508},
}

\bib{aizenman-warzel}{article}{
      author={Aizenman, Michael},
      author={Warzel, Simone},
       title={The canopy graph and level statistics for random operators on
  trees},
        date={2006},
        ISSN={1385-0172},
     journal={Math. Phys. Anal. Geom.},
      volume={9},
      number={4},
       pages={291\ndash 333 (2007)},
         url={https://doi.org/10.1007/s11040-007-9018-3},
      review={\MR{2329431}},
}

\bib{Ballmann}{book}{
      author={Ballmann, Werner},
       title={Lectures on spaces of nonpositive curvature},
      series={DMV Seminar},
   publisher={Birkh\"{a}user Verlag, Basel},
        date={1995},
      volume={25},
        ISBN={3-7643-5242-6},
         url={https://doi.org/10.1007/978-3-0348-9240-7},
        note={With an appendix by Misha Brin},
      review={\MR{1377265}},
}

\bib{benjamini-curien}{article}{
      author={Benjamini, Itai},
      author={Curien, Nicolas},
       title={Ergodic theory on stationary random graphs},
        date={2012},
        ISSN={1083-6489},
     journal={Electron. J. Probab.},
      volume={17},
       pages={no. 93, 20},
         url={http://dx.doi.org/10.1214/EJP.v17-2401},
      review={\MR{2994841}},
}

\bib{blackwell-dubins1983}{article}{
      author={Blackwell, David},
      author={Dubins, Lester~E.},
       title={An extension of {S}korohod's almost sure representation theorem},
        date={1983},
        ISSN={0002-9939},
     journal={Proc. Amer. Math. Soc.},
      volume={89},
      number={4},
       pages={691\ndash 692},
         url={http://dx.doi.org/10.2307/2044607},
      review={\MR{718998}},
}

\bib{bougerol-lacroix}{book}{
      author={Bougerol, Philippe},
      author={Lacroix, Jean},
       title={Products of random matrices with applications to {S}chr\"odinger
  operators},
      series={Progress in Probability and Statistics},
   publisher={Birkh\"auser Boston, Inc., Boston, MA},
        date={1985},
      volume={8},
        ISBN={0-8176-3324-3},
         url={http://dx.doi.org/10.1007/978-1-4684-9172-2},
      review={\MR{886674}},
}

\bib{benjamini-unimodular}{article}{
      author={Benjamini, Itai},
      author={Lyons, Russell},
      author={Schramm, Oded},
       title={Unimodular random trees},
        date={2015},
     journal={Ergodic Theory and Dynamical Systems},
      volume={35},
      number={2},
       pages={359\ndash 373},
}

\bib{benjamini-lyons-schramm1999}{incollection}{
      author={Benjamini, Itai},
      author={Lyons, Russell},
      author={Schramm, Oded},
       title={Percolation perturbations in potential theory and random walks},
        date={1999},
   booktitle={Random walks and discrete potential theory ({C}ortona, 1997)},
      series={Sympos. Math., XXXIX},
   publisher={Cambridge Univ. Press, Cambridge},
       pages={56\ndash 84},
      review={\MR{1802426}},
}

\bib{bonahon}{book}{
      author={Bonahon, Francis},
       title={Low-dimensional geometry},
      series={Student Mathematical Library},
   publisher={American Mathematical Society, Providence, RI; Institute for
  Advanced Study (IAS), Princeton, NJ},
        date={2009},
      volume={49},
        ISBN={978-0-8218-4816-6},
         url={http://dx.doi.org/10.1090/stml/049},
        note={From Euclidean surfaces to hyperbolic knots, IAS/Park City
  Mathematical Subseries},
      review={\MR{2522946}},
}

\bib{benjamini-paquette-pfeffer}{article}{
      author={{Benjamini}, I.},
      author={{Paquette}, E.},
      author={{Pfeffer}, J.},
       title={{Anchored expansion, speed, and the hyperbolic Poisson Voronoi
  tessellation}},
        date={2014-09},
     journal={ArXiv e-prints},
      eprint={1409.4312},
}

\bib{furstenbergformulaoldversion}{misc}{
      author={{Carrasco}, Matias},
      author={{Lessa}, Pablo},
      author={{Paquette}, Elliot},
       title={A {F}urstenberg type formula for the speed of distance stationary
  sequences ({V}ersion 2)},
         how={https://arxiv.org/abs/1710.00733v2},
        date={2017},
}

\bib{chen2003}{inproceedings}{
      author={Chen, Dayue},
      author={Peres, Yuval},
       title={The speed of simple random walk and anchored expansion on
  percolation clusters: an overview.},
        date={2003},
   booktitle={Drw},
       pages={39\ndash 44},
}

\bib{chen2004}{article}{
      author={Chen, Dayue},
      author={Peres, Yuval},
      author={Pete, Gabor},
      author={others},
       title={Anchored expansion, percolation and speed},
        date={2004},
     journal={The Annals of Probability},
      volume={32},
      number={4},
       pages={2978\ndash 2995},
}

\bib{curien2014}{article}{
      author={Curien, Nicolas},
       title={Planar stochastic hyperbolic infinite triangulations},
        date={2014},
     journal={arXiv preprint arXiv:1401.3297},
}

\bib{curien}{article}{
      author={Curien, Nicolas},
       title={Random graphs: the local convergence point of view},
        date={2017},
     journal={Unpublished lecture notes. Available at https://www. math.
  u-psud. fr/\~{}curien/cours/cours-RG-V3. pdf},
}

\bib{MR0423532}{article}{
      author={Derriennic, Yves},
       title={Lois ``z\'ero ou deux'' pour les processus de {M}arkov.
  {A}pplications aux marches al\'eatoires},
        date={1976},
     journal={Ann. Inst. H. Poincar\'e Sect. B (N.S.)},
      volume={12},
      number={2},
       pages={111\ndash 129},
      review={\MR{0423532}},
}

\bib{furstenberg1963b}{article}{
      author={Furstenberg, Harry},
       title={Noncommuting random products},
        date={1963},
        ISSN={0002-9947},
     journal={Trans. Amer. Math. Soc.},
      volume={108},
       pages={377\ndash 428},
      review={\MR{0163345 (29 \#648)}},
}

\bib{gurel-nachmias}{article}{
      author={Gurel-Gurevich, Ori},
      author={Nachmias, Asaf},
       title={Recurrence of planar graph limits},
        date={2013},
     journal={Annals of Mathematics},
       pages={761\ndash 781},
}

\bib{karlsson-goezel}{article}{
      author={{Gou{\"e}zel}, S.},
      author={{Karlsson}, A.},
       title={{Subadditive and Multiplicative Ergodic Theorems}},
        date={2015-09},
     journal={ArXiv e-prints},
      eprint={1509.07733},
}

\bib{gantert-muller-popov2012}{article}{
      author={Gantert, Nina},
      author={M\"{u}ller, Sebastian},
      author={Popov, Serguei},
      author={Vachkovskaia, Marina},
       title={Random walks on {G}alton-{W}atson trees with random
  conductances},
        date={2012},
        ISSN={0304-4149},
     journal={Stochastic Process. Appl.},
      volume={122},
      number={4},
       pages={1652\ndash 1671},
         url={https://doi.org/10.1016/j.spa.2012.01.004},
      review={\MR{2914767}},
}

\bib{MR1794514}{article}{
      author={Hattori, Toshiaki},
       title={Busemann functions and positive eigenfunctions of {L}aplacian on
  noncompact symmetric spaces},
        date={2000},
        ISSN={0023-608X},
     journal={J. Math. Kyoto Univ.},
      volume={40},
      number={3},
       pages={407\ndash 435},
         url={http://dx.doi.org/10.1215/kjm/1250517674},
      review={\MR{1794514}},
}

\bib{haggstrom-peres}{article}{
      author={H\"{a}ggstr\"{o}m, Olle},
      author={Peres, Yuval},
       title={Monotonicity of uniqueness for percolation on {C}ayley graphs:
  all infinite clusters are born simultaneously},
        date={1999},
        ISSN={0178-8051},
     journal={Probab. Theory Related Fields},
      volume={113},
      number={2},
       pages={273\ndash 285},
         url={https://doi.org/10.1007/s004400050208},
      review={\MR{1676835}},
}

\bib{hochman-solomyak}{article}{
      author={Hochman, Michael},
      author={Solomyak, Boris},
       title={On the dimension of furstenberg measure for $sl_ 2(r)$ random
  matrix products},
        date={2017},
     journal={Inventiones mathematicae},
      volume={210},
      number={3},
       pages={815\ndash 875},
}

\bib{kaimanovich87}{article}{
      author={Ka{\u\i}manovich, V.~A.},
       title={Lyapunov exponents, symmetric spaces and a multiplicative ergodic
  theorem for semisimple {L}ie groups},
        date={1987},
        ISSN={0373-2703},
     journal={Zap. Nauchn. Sem. Leningrad. Otdel. Mat. Inst. Steklov. (LOMI)},
      volume={164},
      number={Differentsial$\prime$naya Geom. Gruppy Li i Mekh. IX},
       pages={29\ndash 46, 196\ndash 197},
         url={http://dx.doi.org/10.1007/BF01840421},
      review={\MR{947327}},
}

\bib{karlsson2004linear}{inproceedings}{
      author={Karlsson, Anders},
       title={Linear rate of escape and convergence in direction},
organization={Walter de Gruyter},
        date={2004},
   booktitle={Random walks and geometry: Proceedings of a workshop at the erwin
  schr{\"o}dinger institute, vienna, june 18-july 13, 2001},
       pages={459},
}

\bib{karlsson-ledrappier}{article}{
      author={Karlsson, Anders},
      author={Ledrappier, Fran{\c{c}}ois},
       title={On laws of large numbers for random walks},
        date={2006},
        ISSN={0091-1798},
     journal={Ann. Probab.},
      volume={34},
      number={5},
       pages={1693\ndash 1706},
         url={http://dx.doi.org/10.1214/009117906000000296},
      review={\MR{2271477}},
}

\bib{karlsson-margulis}{article}{
      author={Karlsson, Anders},
      author={Margulis, Gregory~A.},
       title={A multiplicative ergodic theorem and nonpositively curved
  spaces},
        date={1999},
        ISSN={0010-3616},
     journal={Comm. Math. Phys.},
      volume={208},
      number={1},
       pages={107\ndash 123},
         url={http://dx.doi.org/10.1007/s002200050750},
      review={\MR{1729880}},
}

\bib{komlos1967}{article}{
      author={Koml{\'o}s, J.},
       title={A generalization of a problem of {S}teinhaus},
        date={1967},
        ISSN={0001-5954},
     journal={Acta Math. Acad. Sci. Hungar.},
      volume={18},
       pages={217\ndash 229},
      review={\MR{0210177}},
}

\bib{kosenko}{article}{
      author={{Kosenko}, Petr},
       title={Fundamental inequality for hyperbolic coxeter and fuchsian groups
  equipped with geometric distances},
         how={https://arxiv.org/abs/1911.00801},
        date={2019},
     journal={arXiv e-prints},
}

\bib{ledrappier1984}{incollection}{
      author={Ledrappier, Fran{\c{c}}ois},
       title={Quelques propri{\'e}t{\'e}s des exposants caract{\'e}ristiques},
        date={1984},
   booktitle={{\'E}cole d'{\'e}t{\'e} de probabilit{\'e}s de saint-flour
  xii-1982},
   publisher={Springer},
       pages={305\ndash 396},
}

\bib{lyons-permantle-peres1995}{article}{
      author={Lyons, Russell},
      author={Pemantle, Robin},
      author={Peres, Yuval},
       title={Ergodic theory on {G}alton-{W}atson trees: speed of random walk
  and dimension of harmonic measure},
        date={1995},
        ISSN={0143-3857},
     journal={Ergodic Theory Dynam. Systems},
      volume={15},
      number={3},
       pages={593\ndash 619},
         url={http://dx.doi.org/10.1017/S0143385700008543},
      review={\MR{1336708}},
}

\bib{tanaka}{article}{
      author={Tanaka, Ryokichi},
       title={Dimension of harmonic measures in hyperbolic spaces},
        date={2017},
     journal={Ergodic Theory and Dynamical Systems},
       pages={1\ndash 26},
}

\end{biblist}
\end{bibdiv}

\end{document}